\theoremstyle{definition}
\newtheorem{definition}{Definition}
\newtheorem{remark}{Remark}
\newtheorem{example}{Example}
\newtheorem{proposition}{Proposition}
\newtheorem{lemma}{Lemma}
\newtheorem{theorem}{Theorem}
\newcommand{\eproof}{\begin{flushright} $\square$ \end{flushright}}
\newcommand{\im}{\mathop{\fam0 Im}\nolimits}
\newcommand{\re}{\mathop{\fam0 Re}\nolimits}
\newcommand{\bC}{{\mathbb C}}
\newcommand{\Z}{{\mathbb Z}}
\newcommand{\ra}{\mathop{\fam0 \rightarrow}\nolimits}
\newcommand{\cH}{ {\mathcal H}}
\renewcommand{\epsilon}{\varepsilon}
\renewcommand{\phi}{\varphi}
\newcommand{\abs}[1]{\lvert #1 \rvert}
\newcommand{\setZ}{\mathbb{Z}}
\newcommand{\setR}{\mathbb{R}} 
\newcommand{\setC}{\mathbb{C}}
\renewcommand{\Im}{\mathrm{Im}}
\renewcommand{\Re}{\mathrm{Re}}
\DeclareMathOperator{\GL}{GL}
\def\XXint#1#2#3{{\setbox0=\hbox{$#1{#2#3}{\int}$}
\vcenter{\hbox{$#2#3$}}\kern-.5\wd0}}
\newcommand{\Span}{\mathrm{span}}
\newcommand{\COMPLEXS}{\mathbb{C}}
\newcommand{\imun}{\mathsf{i}}
\newcommand{\INTEGERS}{\mathbb{Z}}
\newcommand{\la}{\theta}
\newcommand{\cla}{c_\la}
\newcommand{\REALS}{\mathbb{R}}
\newcommand{\TORUS}{\mathbb{T}}
\newcommand{\gfad}{\operatorname{D}}
\newcommand{\fad}{\operatorname{\Phi}}
\newcommand{\myN}{N}
\newcommand{\lca}{\mathbb{A}_\myN}
\newcommand{\bkt}[1]{\langle #1\rangle}
\begin{document}

\title{Complex Quantum Chern--Simons}
\author{J{\o}rgen Ellegaard Andersen}
\address{Center for Quantum Geometry of Moduli Spaces\\
        University of Aarhus\\
        DK-8000, Denmark}
\email{andersen@qgm.au.dk}

\author{Rinat Kashaev}
\address{University of Geneva\\
2-4 rue du Li\`evre, Case postale 64\\
 1211 Gen\`eve 4, Switzerland}
\email{rinat.kashaev@unige.ch}

\thanks{Supported in part by the center of excellence grant ``Center for quantum geometry of Moduli Spaces" from the Danish National Research Foundation, and Swiss National Science Foundation}

\begin{abstract}
We lay down a general framework for how to construct a Topological Quantum Field Theory $Z_A$ defined on shaped triangulations of orientable 3-manifolds from any Pontryagin self-dual locally compact abelian group $A$. The partition function for a triangulated manifold is given by a state integral over the LCA $A$ of a certain combinations of functions which satisfy Faddeev's operator five term relation. In the cases where  all elements of the LCA $A$ are divisible by 2 and it has a subgroup $B$ whose Pontryagin dual is isomorphic
to $A/B$, this TQFT has an alternative formulation in terms of the space of sections of a line bundle over $(A/B)^{2}$. We apply this to the LCA $\mathbb{R}\times \mathbb{Z}/\myN\mathbb{Z}$ and obtain a TQFT, which we show is  Quantum Chern--Simons theory at level $\myN$ for the complex gauge group $SL(2,\bC)$ by the use of geometric quantization. 
\end{abstract}

\maketitle

\section{Introduction}

In this paper we develop a general method of constructing Topological Quantum Field Theories defined on shaped triangulations of orientable 3-manifolds from a Pontryagin self-dual locally compact abelian group $A$. The method is based on the notion of a quantum dilogarithm over a self-dual LCA group $A$ which generalizes and formalizes Faddeev's quantum dilogarithm which in our framework becomes the simplest example of a quantum dilogarithm over the self-dual LCA group $\REALS$.
 In the cases where  all elements of a self-dual LCA group $A$ are divisible by 2 and it has a subgroup $B$ whose Pontryagin dual is isomorphic
to $A/B$, the associated TQFT has an alternative formulation in terms of the space of sections of a line bundle over $(A/B)^{2}$. We apply this to the LCA $\mathbb{R}\times \mathbb{Z}/\myN\mathbb{Z}$ and obtain a TQFT, which we show is  quantum Chern--Simons theory at level $\myN$ for the complex gauge group $SL(2,\bC)$ by the use of geometric quantization. A similar result has been obtained by Tudor Dimofte \cite{Dimofte3d3d}. The paper is organized as follows. In Section~\ref{MS}, following \cite{MR1607296}, we recall and present the complexified ratio coordinates in Teichm\"uller space and their relationship to complexified Penner's $\lambda$-coordinates. We also recall the definition of the groupoid of decorated ideal triangulations, a convenient algebraic formalization which allows to avoid using any particularities of concrete surfaces. In Section~\ref{GQ}, we develop geometric quantization of the complex symplectic space associated to one triangle in ratio coordinates. We remark that in the paper \cite{AG2} the first author together with Gammelgaard has constructed the representation of the mapping class group also via geometric quantisation, but of higher genus moduli spaces. In the paper \cite{A}, the first author has computed the genus one representation of the mapping class group. In Section~\ref{PtRep}, the quantum representation of the groupoid of decorated ideal triangulations is discussed on the basis of the solution of the Pentagon equation. In Section~\ref{CPT}, the charged operators to be used in 3d partition functions are discussed. In Section~\ref{GF}, we present a formalism which allows to generalize our approach to potentially large class of models based on self-dual LCA groups. In Section~\ref{FoFT}, we develop a general technique which allows as to formulate the models in terms of sections of line bundles over compact LCA groups. Finally, in Section~\ref{QDN} we describe in detail the properties of the quantum dilogarithm over the the self-dual LCA group $\mathbb{R}\times \mathbb{Z}/\myN\mathbb{Z}$ underlying the quantum complex Chern--Simons with gauge group $\mathrm{GL}(2,\COMPLEXS)$.

\subsection*{Acknowledgements} W would like to thank Tudor Dimofte for interesting discussions and  for explaining his results which are similar to ours.

\section{Ratio coordinates on the $\GL(2,\setC)$ moduli space}

\label{MS}

In this section we briefly recall the constructions from \cite{MR1607296} in order to introduce the complex ratio coordinates on the moduli space of flat $\GL(2,\setC)$ connections on an oriented surface $S$ of genus $g$ with $s$ punctures, where $(2g-2+s)s>0$.

Let $\Delta(S)$ be the set of all ideal triangulations of $S$ (the isotopy classes thereof) seen as a cellular complexes and we let $\Delta_i(\tau)$ to denote the set of $i$-dimensional cells of an ideal triangulation $\tau$. Complexified \emph{Penner's decorated Teichm\"uller space} 
$\tilde{\mathcal{T}}_{\COMPLEXS}(S)$ is the set of pairs $(\tau, \lambda)$ where $\tau\in\Delta(S)$ and $\lambda\in \COMPLEXS_{\ne0}^{\Delta_1(\tau)}$, modulo the equivalence relation generated by Ptolemy relations, namely the relations $(\tau,\lambda)\sim(\tau',\lambda')$ where
$\tau$ and $\tau'$ differ by an elementary diagonal flip of a quadrilateral composed of two distinct triangles, and the values of $\lambda$ and $\lambda'$ are the same except the flipped diagonals $d\in\Delta_1(\tau)$ and $d'\in\Delta_1(\tau')$, while the values on those diagonals are related by the Ptolemy formula
\begin{equation}
\lambda(d)\lambda'(d')=ac+bd,
\end{equation}
where $a,b,c,d$ are the values of $\lambda$ (or $\lambda'$) on consecutive sides of the quadrilateral.
The moduli space of flat $PSL(2,\setC)$ connections on $S$ can be described by action of a gauge group on $\tilde{\mathcal{T}}_{\COMPLEXS}(S)$. Namely, the gauge group is $\COMPLEXS_{\ne0}^{\Delta_0(\tau)}$, and an element $f$ of it acts as follows:
\begin{equation}
(f,(\tau,\lambda))\mapsto (\tau,\lambda'), \quad \lambda'(e)=\lambda(e) f(v_1)f(v_2)
\end{equation}
where $v_1,v_2$ two punctures (possibly coinciding) connected by $e$. The remarkable fact about Penner's space and the $\lambda$ coordinates is that the pullback under the projection map of the Goldman's complex symplectic structure in the moduli space is given by a very simple formula
\begin{equation}\label{penner-form}
\omega_{\tilde{\mathcal{T}}}=\sum_{t\in\Delta_2(\tau)} \frac{da\wedge db}{ab}+\frac{db\wedge dc}{bc}+\frac{dc\wedge da}{ca}
\end{equation}
where $a,b,c$ are $\lambda$-coordinates associated with the three sides of $t$ taken in the cyclic order induced from the orientation of $S$.

The \emph{ratio} coordinates are introduced by first specifying a distinguished corner in each triangle, and then by taking two ratios of three $\lambda$-coordinates on the sides of the triangle canonically specified by the distinguished corner. Namely one writes $(b/c,a/c)$ where $c$ is the side opposite to the distinguished corner, and the cyclic order $(a,b,c)$ is induced from the orientation of $S$. We let $\tilde\Delta(S)$ to denote the set of all ideal triangulations with distinguished corners in all triangles. One defines the \emph{complexified ratio} space  $\mathcal{R}_{\COMPLEXS}(S)$ as the set of pairs $(\tau, \mu)$ where  $\tau\in\tilde\Delta(S)$ and $\mu\in \COMPLEXS_{\ne0}^{2\Delta_2(\tau)}$, modulo the equivalence relation generated by the relations $(\tau,\mu)\sim(\tau',\mu')$ where
$\tau$ and $\tau'$ either differ as before by an elementary diagonal flip of a quadrilateral composed of two distinct triangles with specific arrangement of distinguished corners, see Figure~\ref{figure1}, and the values of $\mu$ and $\mu'$ are the same except the triangles involved in the flip, and the relation between those triangles is given by the formula 
 \begin{equation}
x'=x\cdot y\equiv(x_1y_2,x_1y_2+x_2), \quad y'=x*y\equiv\left(\frac{y_1x_2}{x_1y_2+x_2},\frac{y_2}{x_1y_2+x_2}\right)
\end{equation}
where the triangles are indicated in Figure~\ref{figure1},
\begin{figure}[hBT]
  \centering
\begin{picture}(200,40)
\put(0,0){
\begin{picture}(40,40)
\put(20,0){\line(-1,1){20}}
\put(40,20){\line(-1,-1){20}}
\put(0,20){\line(1,1){20}}
\put(40,20){\line(-1,1){20}}
\put(20,0){\line(0,1){40}}
\put(20,0){\circle*{3}}
\put(0,20){\circle*{3}}
\put(20,40){\circle*{3}}
\put(40,20){\circle*{3}}
\footnotesize
\put(10,18){$x$}\put(26,18){$y$}
\put(1,18){$*$}
\put(19.5,2){$*$}
\end{picture}}
\put(160,0){\begin{picture}(40,40)
\put(20,0){\line(-1,1){20}}
\put(40,20){\line(-1,-1){20}}
\put(0,20){\line(1,1){20}}
\put(40,20){\line(-1,1){20}}
\put(0,20){\line(1,0){40}}
\put(20,0){\circle*{3}}
\put(0,20){\circle*{3}}
\put(20,40){\circle*{3}}
\put(40,20){\circle*{3}}
\footnotesize
\put(18,26){$x'$}\put(18,10){$y'$}
\put(3,20){$*$}
\put(17.5,1){$*$}
\end{picture}}
\put(95,17){$\longrightarrow$}
\end{picture}
\caption{The diagonal flip transformation for ratio coordiantes}\label{figure1}
\end{figure}
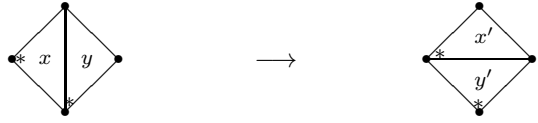
or else $\tau$ and $\tau'$ are related by a change of the distinguished corner in one particular triangle, see Figure~\ref{figure2}, and in that case $\mu$ and $\mu'$ are identical except that triangle where the relation is given by the formula
\begin{equation}
x'=\hat x\equiv (x_2/x_1,1/x_1).
\end{equation}
\begin{figure}[htb]
  \centering
\begin{picture}(200,20)
\put(0,0){\begin{picture}(40,20)
\put(0,0){\line(1,0){40}}
\put(0,0){\line(1,1){20}}
\put(20,20){\line(1,-1){20}}
\put(0,0){\circle*{3}}
\put(20,20){\circle*{3}}
\put(40,0){\circle*{3}}
\footnotesize
\put(33,0){$*$}
\put(18,5){$x$}
\end{picture}}
\put(160,0){\begin{picture}(40,20)
\put(0,0){\line(1,0){40}}
\put(0,0){\line(1,1){20}}
\put(20,20){\line(1,-1){20}}
\put(0,0){\circle*{3}}
\put(20,20){\circle*{3}}
\put(40,0){\circle*{3}}
\footnotesize
\put(17.5,14){$*$}
\put(18,5){$x'$}
\end{picture}}
\put(95,8){$\longrightarrow$}
\end{picture}
\caption{Distinguished corner change transformation}\label{figure2}
\end{figure}
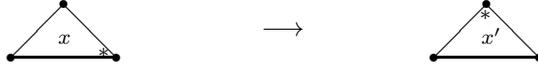
One result of \cite{MR1607296} implies that the ratio space is complex symplectic with the canonical two-form
\begin{equation}
\omega_{\mathcal{R}}=\sum_{x\in\Delta_2(\tau)} \frac{dx_1\wedge dx_2}{x_1x_2},
\end{equation}
and that its pullback by the ratio map is Penner's two-form~\eqref{penner-form},
\begin{equation}
\omega_{\tilde{\mathcal{T}}}=\varrho^*\omega_{\mathcal{R}}.
\end{equation}
Another result of \cite{MR1607296} implies that one has the following exact sequence of vector spaces
\begin{equation}
1\to \COMPLEXS_{\ne0}\stackrel{\Delta}{\to} \tilde{\mathcal{T}}_{\COMPLEXS}(S)\stackrel{\varrho}{\to} \mathcal{R}_{\COMPLEXS}(S)\stackrel{\gamma}{\to} H^1(S, \COMPLEXS_{\ne0})\to 1
\end{equation}
where $\Delta$ is the diagonal map, $\varrho$ is the ratio map described above, and $\gamma$ is a simple map constructed combinatorially, see \cite{MR1607296} for more details.

One can formalize algebraically all these constructions into a groupoid of decorated ideal triangulations, see for example, \cite{Penner2012,MR2952777}, as follows.

\begin{definition}
A \emph{decorated ideal triangulation} of $S$ is an ideal triangulation
$\tau$, where all triangles are provided with a marked corner, and a bijective ordering map
    \[
\bar\tau\colon
\{1,\ldots,2(2g-2+s)\}\ni j\mapsto\bar\tau_j\in\Delta_2(\tau)
    \]
is fixed.
\end{definition}
Graphically,  the marked corner of a triangle $\bar\tau_i$ is indicated by an asterisk and the index $i$ is put inside the triangle. The set of all decorated ideal triangulations of  $S$ is denoted by $\tilde\Delta'(S)$.

Recall that if a group $G$ freely acts on a set $X$ then there is an associated groupoid defined as follows. The objects are the $G$-orbits in $X$, while morphisms are $G$-orbits in $X\times X$ with respect to the diagonal action. Denote by $[x]$ the object represented by the element $x\in X$ and $[x,y]$ the morphism represented by the pair of elements $(x,y)\in X\times X$. Two morphisms $[x,y]$ and $[u,v]$, are composable if and only if $[y]=[u]$ and their composition is  $[x,y][u,v]=[x,gv]$, where $g\in G$ is the unique element sending $u$ to $y$. The inverse and the identity morphisms are given respectively by $[x,y]^{-1}=[y,x]$ and $\mathrm{id}_{[x]}=[x,x]$. In what follows,  products of the form $[x_1,x_2][x_2,x_3]\cdots[x_{n-1},x_n]$ will be written as $[x_1,x_2,x_3,\ldots,x_{n-1},x_n]$.

Remarking that the mapping class group $\Gamma(S)$ of $S$ freely acts on $\tilde\Delta'(S)$, denote by $\mathcal{G}(S)$ the corresponding groupoid, called the \emph{groupoid of decorated ideal triangulations}. It admits a presentation with three types of generators and four types of relations.

The generators are of the form $[\tau,\tau^\sigma]$, $[\tau,\rho_i\tau]$, and $[\tau,\omega_{ij}\tau]$, where $\tau^\sigma$ is obtained from $\tau$ by replacing the ordering map $\bar\tau$ by the map $\bar{\tau}\circ\sigma$, where $\sigma\in\mathbb{S}_{2(2g-2+s)}$ is a permutation of the set $\{1,\ldots,2(2g-2+s)\}$, $\rho_i\tau$ is obtained from $\tau$ by changing the marked corner of the triangle $\bar\tau_i$ as in Figure~\ref{figure3}, and $\omega_{ij}\tau$ is obtained from $\tau$ by applying the flip transformation in the quadrilateral composed of the triangles $\bar\tau_i$ and $\bar\tau_j$ as in Figure~\ref{figure4}.
\begin{figure}[htb]
  \centering
\begin{picture}(200,20)
\put(0,0){\begin{picture}(40,20)
\put(0,0){\line(1,0){40}}
\put(0,0){\line(1,1){20}}
\put(20,20){\line(1,-1){20}}
\put(0,0){\circle*{3}}
\put(20,20){\circle*{3}}
\put(40,0){\circle*{3}}
\footnotesize
\put(33,0){$*$}
\put(18,5){$i$}
\end{picture}}
\put(160,0){\begin{picture}(40,20)
\put(0,0){\line(1,0){40}}
\put(0,0){\line(1,1){20}}
\put(20,20){\line(1,-1){20}}
\put(0,0){\circle*{3}}
\put(20,20){\circle*{3}}
\put(40,0){\circle*{3}}
\footnotesize
\put(17.5,14){$*$}
\put(18,5){$i$}
\end{picture}}
\put(95,8){$\stackrel{\rho_i}{\longrightarrow}$}
\end{picture}
\caption{Transformation $\rho_i$.}\label{figure3}
\end{figure}
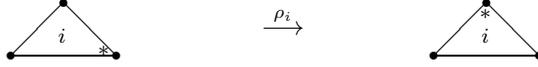
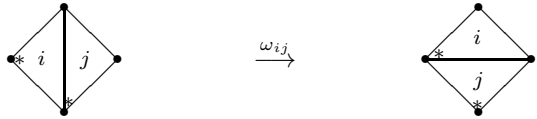
\begin{figure}[htb]
  \centering
\begin{picture}(200,40)
\put(0,0){
\begin{picture}(40,40)
\put(20,0){\line(-1,1){20}}
\put(40,20){\line(-1,-1){20}}
\put(0,20){\line(1,1){20}}
\put(40,20){\line(-1,1){20}}
\put(20,0){\line(0,1){40}}
\put(20,0){\circle*{3}}
\put(0,20){\circle*{3}}
\put(20,40){\circle*{3}}
\put(40,20){\circle*{3}}
\footnotesize
\put(10,18){$i$}\put(26,18){$j$}
\put(1,18){$*$}
\put(19.5,2){$*$}
\end{picture}}
\put(160,0){\begin{picture}(40,40)
\put(20,0){\line(-1,1){20}}
\put(40,20){\line(-1,-1){20}}
\put(0,20){\line(1,1){20}}
\put(40,20){\line(-1,1){20}}
\put(0,20){\line(1,0){40}}
\put(20,0){\circle*{3}}
\put(0,20){\circle*{3}}
\put(20,40){\circle*{3}}
\put(40,20){\circle*{3}}
\footnotesize
\put(18,26){$i$}\put(18,10){$j$}
\put(3,20){$*$}
\put(17.5,1){$*$}
\end{picture}}
\put(95,17){$\stackrel{\omega_{ij}}{\longrightarrow}$}
\end{picture}
\caption{Transformation $\omega_{ij}$.}\label{figure4}
\end{figure}

There are two sets of relations satisfied by these generators. The first set is as follows:
\begin{gather}
\label{eq:23}
[\tau,\tau^\alpha,(\tau^\alpha)^\beta]=[\tau,\tau^{\alpha\beta}],\quad \alpha,\beta\in\mathbb{S}_{2(2g-2+s)},\\
  \label{eq:cubic}
  [\tau,\rho_i\tau,\rho_i\rho_i\tau,\rho_i\rho_i\rho_i\tau]=\mathrm{id}_{[\tau]},\\\label{eq:pent}
  [\tau,\omega_{ij}\tau,\omega_{ik}\omega_{ij}\tau,\omega_{jk}\omega_{ik}\omega_{ij}\tau]
  =[\tau,\omega_{jk}\tau,\omega_{ij}\omega_{jk}\tau],
\\
\label{eq:20}
[\tau,\omega_{ij}\tau,\rho_i\omega_{ij}\tau,\omega_{ji}\rho_i\omega_{ij}\tau]
=[\tau,\tau^{(ij)},\rho_j\tau^{(ij)},\rho_i\rho_j\tau^{(ij)}].
\end{gather}
The first two relations are evident, while the other two are shown graphically in Figures~\ref{fig:pen-om},~\ref{fig:inv-rel}.
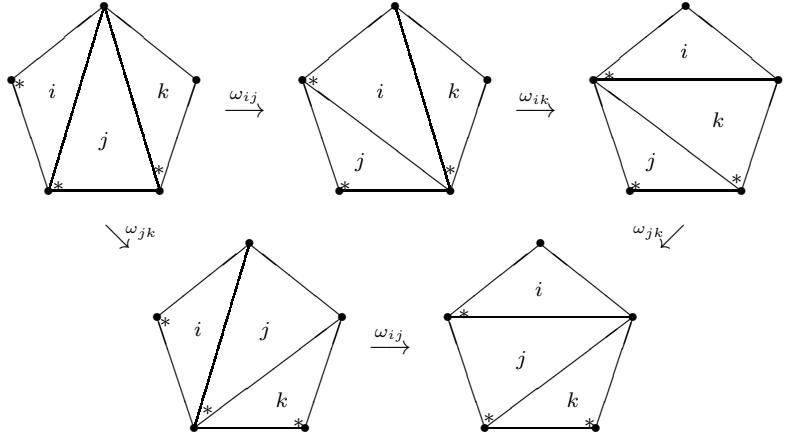
\begin{figure}[htb]
  \centering
\begin{picture}(290,160)
\put(0,90){\begin{picture}(70,70)
\put(14,0){\line(-1,3){14}}
\put(56,0){\line(1,3){14}}
\put(0,42){\line(5,4){35}}
\put(70,42){\line(-5,4){35}}
\put(14,0){\line(1,0){42}}
\qbezier(14,0)(20,20)(35,70)
\qbezier(56,0)(50,20)(35,70)
\put(14,0){\circle*{3}}
\put(56,0){\circle*{3}}
\put(0,42){\circle*{3}}
\put(70,42){\circle*{3}}
\put(35,70){\circle*{3}}
\footnotesize
\put(1,38.5){$*$}
\put(15.5,0){$*$}
\put(53.75,6){$*$}
\put(14,35){$i$}
\put(33,17){$j$}
\put(55,35){$k$}
\end{picture}}
\put(110,90){\begin{picture}(70,70)
\put(14,0){\line(-1,3){14}}
\put(56,0){\line(1,3){14}}
\put(0,42){\line(5,4){35}}
\put(70,42){\line(-5,4){35}}
\put(14,0){\line(1,0){42}}
\put(56,0){\line(-4,3){56}}
\qbezier(56,0)(50,20)(35,70)
\put(14,0){\circle*{3}}
\put(56,0){\circle*{3}}
\put(0,42){\circle*{3}}
\put(70,42){\circle*{3}}
\put(35,70){\circle*{3}}
\footnotesize
\put(2,39.5){$*$}
\put(14,0){$*$}
\put(53.75,6){$*$}
\put(28,35){$i$}
\put(20,9){$j$}
\put(55,35){$k$}
\end{picture}}
\put(220,90){\begin{picture}(70,70)
\put(14,0){\line(-1,3){14}}
\put(56,0){\line(1,3){14}}
\put(0,42){\line(5,4){35}}
\put(70,42){\line(-5,4){35}}
\put(14,0){\line(1,0){42}}
\put(56,0){\line(-4,3){56}}
\put(0,42){\line(1,0){70}}
\put(14,0){\circle*{3}}
\put(56,0){\circle*{3}}
\put(0,42){\circle*{3}}
\put(70,42){\circle*{3}}
\put(35,70){\circle*{3}}
\footnotesize
\put(4,41.5){$*$}
\put(14,0){$*$}
\put(52.25,2.5){$*$}
\put(33,50){$i$}
\put(20,9){$j$}
\put(45,24 ){$k$}
\end{picture}}
\put(55,0){\begin{picture}(70,70)
\put(14,0){\line(-1,3){14}}
\put(56,0){\line(1,3){14}}
\put(0,42){\line(5,4){35}}
\put(70,42){\line(-5,4){35}}
\put(14,0){\line(1,0){42}}
\qbezier(14,0)(20,20)(35,70)
\put(14,0){\line(4,3){56}}
\put(14,0){\circle*{3}}
\put(56,0){\circle*{3}}
\put(0,42){\circle*{3}}
\put(70,42){\circle*{3}}
\put(35,70){\circle*{3}}
\footnotesize
\put(1,38.5){$*$}
\put(17,5){$*$}
\put(51.5,0){$*$}
\put(14,35){$i$}
\put(39,35){$j$}
\put(45,8){$k$}
\end{picture}}
\put(165,0){\begin{picture}(70,70)
\put(14,0){\line(-1,3){14}}
\put(56,0){\line(1,3){14}}
\put(0,42){\line(5,4){35}}
\put(70,42){\line(-5,4){35}}
\put(14,0){\line(1,0){42}}
\put(0,42){\line(1,0){70}}
\put(14,0){\line(4,3){56}}
\put(14,0){\circle*{3}}
\put(56,0){\circle*{3}}
\put(0,42){\circle*{3}}
\put(70,42){\circle*{3}}
\put(35,70){\circle*{3}}
\footnotesize
\put(4,41.5){$*$}
\put(13.5,2){$*$}
\put(51.5,0){$*$}
\put(33,50){$i$}
\put(26,24){$j$}
\put(45,8){$k$}
\end{picture}}
\put(35,70){$\searrow$}\put(43,74){\tiny$\omega_{jk}$}
\put(245,70){$\swarrow$}\put(235,74){\tiny$\omega_{jk}$}
\put(135,28){$\stackrel{\omega_{ij}}{\longrightarrow}$}
\put(80,118){$\stackrel{\omega_{ij}}{\longrightarrow}$}
\put(190,118){$\stackrel{\omega_{ik}}{\longrightarrow}$}
\end{picture}
  \caption{Pentagon relation~\eqref{eq:pent}.}
  \label{fig:pen-om}
\end{figure}

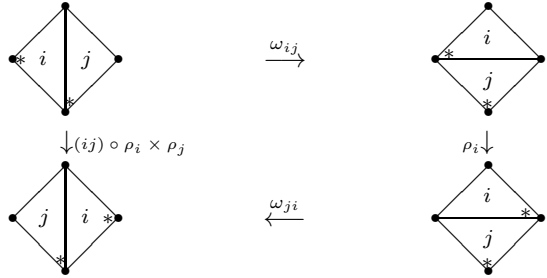
\begin{figure}[htb]
  \centering
  \begin{picture}(200,100)
\put(0,60){\begin{picture}(200,40)
\put(0,0){\begin{picture}(40,40)
\put(20,0){\line(-1,1){20}}
\put(40,20){\line(-1,-1){20}}
\put(0,20){\line(1,1){20}}
\put(40,20){\line(-1,1){20}}
\put(20,0){\line(0,1){40}}
\put(20,0){\circle*{3}}
\put(0,20){\circle*{3}}
\put(20,40){\circle*{3}}
\put(40,20){\circle*{3}}
\footnotesize
\put(10,18){$i$}\put(26,18){$j$}
\put(1,18){$*$}
\put(19.5,2){$*$}
\end{picture}}
\put(160,0){\begin{picture}(40,40)
\put(20,0){\line(-1,1){20}}
\put(40,20){\line(-1,-1){20}}
\put(0,20){\line(1,1){20}}
\put(40,20){\line(-1,1){20}}
\put(0,20){\line(1,0){40}}
\put(20,0){\circle*{3}}
\put(0,20){\circle*{3}}
\put(20,40){\circle*{3}}
\put(40,20){\circle*{3}}
\footnotesize
\put(18,26){$i$}\put(18,10){$j$}
\put(3,20){$*$}
\put(17.5,1){$*$}
\end{picture}}
\put(95,17){$\stackrel{\omega_{ij}}{\longrightarrow}$}
\end{picture}}
\put(18,46){$\downarrow$\tiny$(ij)\circ\rho_i\times\rho_j$}
\put(170,46){{\tiny$\rho_i$}$\downarrow$}
\put(0,0){\begin{picture}(200,40)
\put(0,0){\begin{picture}(40,40)
\put(20,0){\line(-1,1){20}}
\put(40,20){\line(-1,-1){20}}
\put(0,20){\line(1,1){20}}
\put(40,20){\line(-1,1){20}}
\put(20,0){\line(0,1){40}}
\put(20,0){\circle*{3}}
\put(0,20){\circle*{3}}
\put(20,40){\circle*{3}}
\put(40,20){\circle*{3}}
\footnotesize
\put(10,18){$j$}\put(26,18){$i$}
\put(16,2){$*$}
\put(34,17.5){$*$}
\end{picture}}
\put(160,0){\begin{picture}(40,40)
\put(20,0){\line(-1,1){20}}
\put(40,20){\line(-1,-1){20}}
\put(0,20){\line(1,1){20}}
\put(40,20){\line(-1,1){20}}
\put(0,20){\line(1,0){40}}
\put(20,0){\circle*{3}}
\put(0,20){\circle*{3}}
\put(20,40){\circle*{3}}
\put(40,20){\circle*{3}}
\footnotesize
\put(18,26){$i$}\put(18,10){$j$}
\put(32,20){$*$}
\put(17.5,1){$*$}
\end{picture}}
\put(95,17){$\stackrel{\omega_{ji}}{\longleftarrow}$}
\end{picture}}
\end{picture}
  \caption{Inversion relation~\eqref{eq:20}.}
  \label{fig:inv-rel}
\end{figure}
The following commutation relations fulfill the remaining second set of relations:
\begin{gather}
[\tau,\rho_i\tau,(\rho_i\tau)^\sigma]=[\tau,\tau^\sigma,\rho_{\sigma^{-1}(i)}\tau^\sigma],\\
[\tau,\omega_{ij}\tau,(\omega_{ij}\tau)^\sigma]=[\tau,\tau^\sigma,
\omega_{\sigma^{-1}(i)\sigma^{-1}(i)}\tau^\sigma],\\
[\tau,\rho_j\tau,\rho_i\rho_j\tau]=[\tau,\rho_i\tau,\rho_j\rho_i\tau],\\
[\tau,\rho_i\tau,\omega_{jk}\rho_i\tau]=[\tau,\omega_{jk}\tau,\rho_i\omega_{jk}\tau],\ i\not\in\{j,k\},\\
[\tau,\omega_{ij}\tau,\omega_{kl}\omega_{ij}\tau]=[\tau,\omega_{kl}\tau,\omega_{ij}\omega_{kl}\tau],\
\{i,j\}\cap\{k,l\}=\emptyset.
\end{gather}

\section{The complex Weil representation via geometric quantisation}

\label{GQ}

Motivated by the previous section, we will now consider the geometric quantisation of the space we associated to a triangle above, e.g. $(\setC^*)^2$ with the symplectic form $\tilde \Omega = \frac{dx}{x}\wedge\frac{dy}{y}$, where $(x,y)$ are coordinates on $(\setC^*)^2$. Consider the corresponding logarithmic coordinates $(u,v)$ on $\setC^2$ with the underlying real coordinates 
$$  u = u' + iu''\quad, \quad v = v' + iv''$$
such that we have the covering map
$$\pi : \setC^2 \ra (\setC^*)^2$$
given by
$$ \pi(u,v) = (\exp(2\pi i u), \exp(2 \pi i v)) = (x,y).$$
So we see that
$$ \pi^*\tilde \Omega = \Omega$$
where $\Omega = du\wedge dv.$
For any non-zero complex number $t$ consider the following real symplectic structure
\begin{align}
  \nonumber
  \Omega_t &= \frac{t}{2} \Omega + \frac{\bar t}{2} \bar \Omega.
\end{align}
In real coordinates we have that
\begin{align}
  \nonumber
    \Omega_t &= \re(t) (du'\wedge dv' - du'' \wedge dv'') - \im(t) (du''
  \wedge dv' + du' \wedge dv'')
\end{align}
Hence we get the following formulae for the Hamiltonian vector fields of the real coordinates.
\begin{align}
  \nonumber
    X_{u'} &= \quad\frac{1}{\abs{t}^2} (\re(t) \frac{\partial}{\partial v'} -
  \im(t)\frac{\partial}{\partial v''})\\
  \nonumber
    X_{u''} &= -\frac{1}{\abs{t}^2} (\im(t) \frac{\partial}{\partial v'} +
  \re(t)\frac{\partial}{\partial v''})\\
  \nonumber
    X_{v'} &= -\frac{1}{\abs{t}^2} (\re(t) \frac{\partial}{\partial u'} -
  \im(t)\frac{\partial}{\partial u''})\\
  \nonumber
    X_{v''} &= \quad\frac{1}{\abs{t}^2} (\im(t) \frac{\partial}{\partial u'} +
  \re(t)\frac{\partial}{\partial u''})
\end{align}
We shall now fix a primitive $\alpha_t$ of $\Omega_t$ given by
$$\alpha_t =  \frac{t}{2} \alpha + \frac{\bar t}{2} \bar \alpha$$
where $\alpha = \frac12(v du - u dv)$.
Consider the trivial line bundle over $\tilde L$ over $\setC^2$.
Let $\tilde \nabla$ be the connection associated  to the primitive $\alpha_t$ on $\tilde L$,
i.e
$$\tilde \nabla = \nabla^t + 2 \pi i \alpha_t.$$
 such that 
 $$F_{\nabla} = 2\pi i \Omega_t.$$
Now we consider the canonical pre-quantum operators
$$P_f = -i\tilde\nabla_{X_f}+f$$
which act on $\widetilde\cH =
C^\infty(\setC^2,\tilde L)$ and satisfy the commutation relation
\begin{align}\label{pqocr}
    [P_f, P_g] &= -iP_{\{f, g\}}
\end{align}
In order to complete the geometric quantisation program, we need to specify a polarization ${\mathcal P}$, given by the following real Lagrangian subspace of $\setC^2$.
$${\mathcal P}= \Span_{\mathbb R}\{\frac{\partial}{\partial u}, \frac{\partial}{\partial \bar v}\}.$$
Hence the space of polarised sections is given by
$$
  \widetilde\cH_{{\mathcal P}} = \{ s \in \widetilde\cH \mid  \tilde\nabla_{\frac{\partial}{\partial u}} s = 
  \tilde\nabla_{\frac{\partial}{\partial \bar v}}s = 0 \}.
$$
We observe that polarised sections are determined by their restrictions to the transversal
\begin{align}
  \nonumber
    \setR^2 = \{ (u,v)\in \setC^2 \mid u'' = v'' = 0 \}.
\end{align}
With this transversal, we can now describe the pre-quantum operators of the basic coordinate functions
\begin{align}
  \label{pro1}
    P_{u'}  &= -\frac{i}{t}\tilde\nabla_{\frac{\partial}{\partial v'}} + u' &
  P_{u''} &= -\frac{1}{t}\tilde\nabla_{\frac{\partial}{\partial v'}} \\
  P_{v'}  &= -\frac{i}{\bar t}\tilde\nabla_{\frac{\partial}{\partial u'}} + v' &
  P_{v''} &= -\frac{1}{\bar t}\tilde\nabla_{\frac{\partial}{\partial u'}} \label{pro2}
\end{align}
When we restrict the symplectic form to the transversal, we get the symplectic form
\begin{align}
  \nonumber
    (\Omega_t)|_{\setR^2} = \re(t) \omega
\end{align}
where $\omega=du'\wedge dv'$.
Since we are in the process of quantising $(\setC^*)^2$, we also need to consider the action of the lattice $\setZ^2 \subset \setR^2$ on the restriction of the line bundle $\tilde L$ and its connection $\tilde\nabla$ to the transversal $\setR^2$. From this we get immediately a quantisation condition, namely that 
$$\re(t) \in 2\pi \setZ.$$
Let us denote this integer $k = \frac{\re(t)}{2\pi}$ and call it the {\em level} of the theory. We can now construct a unique line bundle $L$ and a connection $\nabla$ on $T^2 = \setR^2/\setZ^2$ induced from $\tilde L$ and $\tilde \nabla$ by specifying the lift of the action of the $\setZ^2$ to $\tilde L|_{\setR^2}$ by specifying the following multipliers
\begin{align}
  \nonumber
    e_{(1,0)}(u',v') &= e^{\pi iv'}\\
  \nonumber
  e_{(0,1)}(u',v') &= e^{-\pi iu'}
\end{align}
We observe that the first Chern class of this line bundle $c_1(L) = [\omega]\in H^2(T^2,\setZ)$
 is a generator.
We now consider the pre-Hilbert space of the theory $\cH = C^\infty(T^2, L^k)$, which is canonically given by
$$ \cH = \{ s \in C^\infty(\setR^2) | s(u'+1, v') =  e^{\pi ikv'}s(u', v'), \\ s(u', v'+1) = e^{-\pi iku'}s(u', v')\}.$$
The Hilbert space structure we will consider on this space is the following one
\begin{align}\label{US}
(s_1,s_2) = \int_{T^2} s_1\overline{S(s_2)}\omega,
\end{align}
where $S : T^2 \ra T^2$ is the diffeomorphism given by 
$$S(u',v') = (-v',u').$$
We observe that $S : \cH \ra \cH$, thus $(\cdot,\cdot)$ is the usual $L_2$-inner product on $C^\infty(T^2,L^k)$ twisted by symplectomorphism $S$ of $T^2$.
Let $\nabla^{(k)}$ be the induced connection in $L^k$ with curvature $k\omega$, which is explicitly given by
\begin{align}
  \nonumber
    \nabla = \nabla^t + i k \pi(v'du' - u'dv').
\end{align}
Introduce the operators
$$  A = \nabla_{\frac{\partial}{\partial u'}} - 2\pi i kv' \text{ and }B = \nabla_{\frac{\partial}{\partial v'}} + 2\pi ik u'.$$
Then we observe that
$$  \cH = \{ s \in C^\infty(\setR^2) | e^A s = s, e^B s = s\}.$$
We do not have that the pre-quantum operators associated to the coordinate functions (\ref{pro1})-(\ref{pro2}) preserve $\cH$. However, we proceeds as follows.

Introduce a new non-zero complex variable $b$ related to $t$ by
$$ t = 4\pi  b^2$$
thus $2\re(b^2) \in \setZ$. Consider the operators
\begin{align}
  \nonumber
  q &= ib P_{u}& 
  p &= ib P_{v}\\
  \nonumber
 \tilde q &=  -i\bar b P_{\bar u} &
 \tilde p  &= i\bar b P_{\bar v}
\end{align}
and their exponentials
\begin{align}
  \nonumber
  U &= e^{2\pi b^{-1}q} & V &= e^{2\pi b^{-1}p}\\
  \nonumber
  \tilde U &= e^{2\pi \bar b^{-1}\tilde q} &
  \tilde V &= e^{2\pi \bar b^{-1} \tilde p}
\end{align}
all acting on $C^\infty(\setR^2)$.
\begin{lemma}
We have the commutation relations
\begin{align}
  \nonumber
  [q,p] &= -\frac{1}{2\pi i} = [\tilde q, \tilde p]\\
  \nonumber
  [q, \tilde q] &= [q,\tilde p] = [p, \tilde q] = 0.
\end{align}
Furthermore their exponentials $U,V,\tilde U,\tilde V$ preserves a dense subspace of $\cH$.
\end{lemma}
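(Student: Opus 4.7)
My plan is to derive both parts by combining the pre-quantum commutator identity \eqref{pqocr} with the explicit Poisson structure of $\Omega_t$, and then to make the exponentials concrete enough, via Baker--Campbell--Hausdorff, that their action on a dense subspace is visible.

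For the commutation relations, the key input is the Poisson bracket computation on the complex coordinates $u,v,\bar u,\bar v$ of $\setC^2$. Writing $\Omega_t=\tfrac{t}{2}\,du\wedge dv+\tfrac{\bar t}{2}\,d\bar u\wedge d\bar v$, the symplectic form is already diagonal in complex coordinates, so the Poisson bivector is $\tfrac{2}{t}\partial_u\wedge\partial_v+\tfrac{2}{\bar t}\partial_{\bar u}\wedge\partial_{\bar v}$, giving
\[
\{u,v\}=\tfrac{2}{t},\qquad \{\bar u,\bar v\}=\tfrac{2}{\bar t},\qquad \{u,\bar u\}=\{u,\bar v\}=\{v,\bar u\}=\{v,\bar v\}=0.
\]
Inserting these into $[P_f,P_g]=-iP_{\{f,g\}}$ together with $t=4\pi b^{2}$ gives, for instance,
\[
[q,p]=(ib)^{2}[P_u,P_v]=-b^{2}\cdot(-2i/t)=\tfrac{i}{2\pi}=-\tfrac{1}{2\pi i},
\]
and the parallel computation, with the signs in $\tilde q=-i\bar bP_{\bar u}$, $\tilde p=i\bar bP_{\bar v}$ arranged precisely so that the overall scalar prefactor matches, yields $[\tilde q,\tilde p]=-\tfrac{1}{2\pi i}$. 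The three remaining cross-commutators vanish immediately from the vanishing mixed Poisson brackets.

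For the invariance claim, I would substitute the polarized expressions \eqref{pro1}--\eqref{pro2} to see that, restricted to the transversal $\setR^{2}$, each of $q,p,\tilde q,\tilde p$ becomes a first-order differential operator of the shape (scalar)$\cdot\tilde\nabla_{\partial/\partial u'}$ or (scalar)$\cdot\tilde\nabla_{\partial/\partial v'}$ plus a scalar multiple of multiplication by $u'$ or $v'$. In each case the translation part and the multiplication part commute (they act in disjoint real variables), so $e^{A+B}=e^{A}e^{B}$ and the exponentials $U,V,\tilde U,\tilde V$ are simply a translation composed with multiplication by an exponential. The dense subspace $\mathcal D\subset\cH$ I would use is the algebraic span of a quasi-periodic Fourier--theta basis, on which the multiplication factor acts as a scalar on each basis element and the translation factor shifts in the conjugate coordinate.

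The main technical obstacle is verifying that the specific translation amounts (involving $b^{-2},\bar b^{-2}$) together with the multiplier exponents (involving $2\pi iu'$ and $2\pi iv'$) preserve the quasi-periodicity conditions $e^{A}s=e^{B}s=s$ defining $\cH$. This is a bookkeeping computation in Baker--Campbell--Hausdorff: one pushes a shift in, say, $v'$ past the lattice generator $e^{A}$ and checks that the resulting multiplicative discrepancy is exactly cancelled by the exponential multiplier, which works precisely because of the level quantization $2\Re(b^{2})\in\setZ$ already imposed. Once the verification is carried out for $U$, the cases $V,\tilde U,\tilde V$ follow by the structural symmetry between the $(q,p)$ and $(\tilde q,\tilde p)$ pairs.
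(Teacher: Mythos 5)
Your route to the commutation relations is the same as the paper's: both reduce everything to the prequantum identity $[P_f,P_g]=-iP_{\{f,g\}}$, and your Poisson brackets $\{u,v\}=2/t$, $\{\bar u,\bar v\}=2/\bar t$, with vanishing mixed brackets, are correct (they can be checked directly against the Hamiltonian vector fields listed after $\Omega_t$). The computation $[q,p]=(ib)^2\cdot(-i)\{u,v\}=2ib^2/t=-\tfrac{1}{2\pi i}$ is right. However, you never actually carry out the tilde case, and the assertion that the signs in $\tilde q=-i\bar bP_{\bar u}$, $\tilde p=i\bar bP_{\bar v}$ are ``arranged so that the prefactor matches'' is not what happens with the stated definitions: $(-i\bar b)(i\bar b)=+\bar b^2$ whereas $(ib)(ib)=-b^2$, so the prefactors have \emph{opposite} signs, and with $\{\bar u,\bar v\}=2/\bar t$ one lands on $[\tilde q,\tilde p]=-i/2\pi=+\tfrac{1}{2\pi i}$. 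The claimed value $-\tfrac{1}{2\pi i}$ is instead what the adjoint relations $q^*=\tilde q$, $p^*=\tilde p$ force, via $[\tilde q,\tilde p]=-\overline{[q,p]}$. This is the one place where the computation is not literally ``parallel,'' and it needs to be confronted rather than waved at.

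For the second half, your factorization of each exponential into a commuting translation and multiplier is correct and matches the paper's computation $e^{a\nabla_{\partial/\partial v'}}s(u',v')=e^{-\pi ikau'}s(u',v'+a)$. But you locate the difficulty in the wrong place. The quasi-periodicity bookkeeping you describe goes through for any section on which the operator is defined at all, and does not hinge on $2\re(b^2)\in\setZ$ beyond that condition being needed for $\cH$ to exist in the first place. The reason the lemma claims preservation only of a \emph{dense subspace} is that the translation amounts $b^{-2}$, $\bar b^{-2}$ are genuinely complex, so the expression $s(u',v'+a)$ is meaningless unless $s$ extends analytically in that variable; this is the entire content of the paper's proof of this clause, which restricts to the dense subspace of sections admitting such an analytic extension. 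Your theta-function basis does consist of entire functions, so your argument can be repaired by saying so explicitly, but as written the proposal never explains why the exponentials fail to preserve all of $\cH$, which is the substantive point of the statement.
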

\proof
The commutation relations follow very simply from the general relation (\ref{pqocr}). The scalings are precisely chosen so that zero order part of $q, p, \tilde q, \tilde p$ exponentiate to well-defined functions on $T^2$. 
We now compute
$$
 e^{a\nabla_{\frac{\partial}{\partial v'}}}(s)(u',v') = e^{-\pi i k a u'} s(u', v'+a).$$
 Thus we see that this operator is only defined on the subspace of analytic sections, which can be extended so that this evaluation makes sense. For such $s$ it is easy to see that $\tilde s$ is again in $\cH$.
\eproof
We observe that
\begin{align}
  \nonumber
  A &=  -2\pi \bar b\tilde q - 2\pi b p
\end{align}
and
\begin{align}
  \nonumber
  B &=  2\pi  b q - 2\pi \bar b\tilde p.
\end{align}

Hence we have that
\begin{align}
\label{nf}
\cH = \{ s\in C^\infty(\setR^2) \mid U^{b^2} s = \tilde U^{-\bar b^2}s, V^{b^2} s = \tilde V^{\bar b^2}s\}.
\end{align}

\begin{proposition}\label{spec}
The unbounded operators $q,p,\tilde q, \tilde p$ acting on the Hilbert space
$$ \cH' = \{ s\in H_1([0,1]^2) \mid  s(1, v') =  e^{\pi ikv'}s(0, v'), \\ s(u', 1) = e^{-\pi iku'}s(u', 0)\}$$
are normal and their Domain and spectrum are as follows
  \begin{align}
  {\mathcal D}(q) &= \{s\in\cH' \mid s(0,v') = 0\} & \sigma(q) &= i ( b- b^{-1}k) [0,1] + b^ {-1} \setZ\\
   {\mathcal D}(\tilde q) &= \{s\in\cH' \mid s(u',0) = 0\} & \sigma(\tilde q) &= i ( \bar b- \bar b^{-1}k) [0,1] + \bar b^ {-1} \setZ\\
   {\mathcal D}(p) &= \{s\in\cH' \mid s(0,v') = 0\} & \sigma(q) &= i b [0,1] \\
   {\mathcal D}(\tilde p) &= \{s\in\cH' \mid s(u',0) = 0\} & \sigma(\tilde q) &= - i  \bar b [0,1].
  \end{align}
  
\end{proposition}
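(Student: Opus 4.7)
The plan is to reduce each operator to an explicit differential–multiplication operator on the transversal $\setR^2$, then diagonalize via a partial Fourier decomposition compatible with the pseudoperiodic boundary conditions defining $\cH'$.

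First, I would substitute (\ref{pro1})--(\ref{pro2}) into the definitions of $q, p, \tilde q, \tilde p$. Writing $P_u = P_{u'} + i P_{u''}$ and $P_{\bar u} = P_{u'} - i P_{u''}$, and analogously for $v$, a direct calculation shows that the derivative contributions cancel in $P_{\bar u}$ and $P_{\bar v}$, leaving $P_{\bar u} = u'$ and $P_{\bar v} = v'$ as pure multiplication operators on polarized sections restricted to the transversal. Consequently $\tilde q = -i\bar b\, u'$ and $\tilde p = i\bar b\, v'$ are multiplication by affine complex functions, so normality is automatic and their spectra are simply the ranges of these multipliers on $[0,1]$. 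By contrast, $q = ib P_u$ and $p = ib P_v$ retain the derivative terms $\tilde\nabla_{\partial/\partial v'}$ and $\tilde\nabla_{\partial/\partial u'}$ respectively, so $q$ is a first-order differential operator in $v'$ with a multiplicative correction affine in $u'$, and similarly $p$ is first-order in $u'$ with an affine multiplicative piece in $v'$.

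Second, to compute $\sigma(q)$, I would use the substitution $s(u', v') = e^{-\pi i k u' v'} \tilde s(u', v')$, which converts the pseudoperiodicity $s(u', v'+1) = e^{-\pi i k u'} s(u', v')$ into genuine $1$-periodicity of $\tilde s$ in $v'$. Expanding $\tilde s(u', v') = \sum_{n\in\setZ} c_n(u') e^{2\pi i n v'}$, on the $n$-th Fourier mode $\partial/\partial v'$ acts as multiplication by $2\pi i n - \pi i k u'$; combining this with the connection contribution and with the $ibu'$ piece coming from the bare coordinate, the operator $q$ reduces on the $n$-th sector to multiplication by the affine function $b^{-1} n + i(b - b^{-1} k) u'$. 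The spectrum of multiplication by this function on $L^2([0,1])$ is the closed segment $b^{-1} n + i(b - b^{-1} k)[0,1]$, and the closure of the union over $n \in \setZ$ is exactly $i(b - b^{-1} k)[0,1] + b^{-1}\setZ$. An analogous Fourier decomposition in $u'$, using the pseudoperiodicity $s(u'+1, v') = e^{\pi i k v'} s(u', v')$, handles $p$ and yields $\sigma(p) = ib[0,1]$; the computations for $\tilde q$ are parallel but with $b$ replaced by $\bar b$ and the roles of $u'$ and $v'$ swapped.

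Third, normality of all four operators follows from the commutation relations $[q,\tilde q] = [q,\tilde p] = [p,\tilde q] = [p,\tilde p] = 0$ from the preceding Lemma, since in each Fourier sector every operator becomes multiplication by some complex-valued function of one real variable, and mutually commuting multiplication operators are simultaneously diagonalizable and hence normal. The domain conditions $s(0, v') = 0$ (for $q, p$) and $s(u', 0) = 0$ (for $\tilde q, \tilde p$) arise because these unbounded operators admit multiple closed extensions on the Sobolev-type space $\cH'$, and the stated boundary values select the unique normal extension whose spectrum matches the statement. The main obstacle I anticipate lies precisely in this last step: one must verify that the formal Fourier resolution converges in the $H_1$-graph norm and yields a complete spectral decomposition for the chosen closed extension, and in particular that no additional point spectrum arises from the boundary values $u', v' \in \{0,1\}$ of the multiplicative factors — this is where the explicit vanishing conditions at $0$ become essential for selecting the correct normal closure.
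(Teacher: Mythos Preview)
Your Fourier-decomposition route is genuinely different from the paper's. The paper proceeds by a direct resolvent analysis: it considers a general operator $D = a\nabla_{\partial/\partial v'} + cu' + dv'$ and explicitly solves the inhomogeneous ODE $(D-\lambda)s = \tilde s$ in the variable $v'$, with $u'$ carried as a parameter, writing the general solution as an integral plus an undetermined function $\tilde r(u')$. Imposing the two pseudoperiodicity conditions defining $\cH'$ then \emph{forces} the compatibility condition $\tilde s(0,v')=0$ and determines $\tilde r(u')$ uniquely provided $\lambda \notin (c-2\pi i ak)[0,1] + 2\pi i a\setZ$. Thus the domain condition and the spectrum fall out simultaneously from the resolvent computation; no choice of extension is made.

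Your gauge-and-Fourier reduction recovers the spectrum of $q$ more transparently, but the domain condition is where your argument has a real gap. You assert that the vanishing conditions ``select the unique normal extension,'' yet nothing in your Fourier picture explains why a first-order operator in $v'$ should require vanishing on the line $u'=0$ rather than on $v'=0$, on both, or on neither. In the paper's calculation this is not a choice: the condition $s(0,v')=0$ is derived as the obstruction to solving the resolvent equation inside $\cH'$. Your framework, as written, has no mechanism that produces it. A second issue: your computation that $\tilde q = -i\bar b\,u'$ and $\tilde p = i\bar b\,v'$ are pure multiplication operators is consistent with (\ref{pro1})--(\ref{pro2}), but then their spectra would be $-i\bar b[0,1]$ and $i\bar b[0,1]$, and no boundary condition would be needed for normality --- neither of which matches the statement as written. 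The paper treats only the $q$-type case in detail and leaves the rest to the reader, so this may reflect inconsistencies in the statement rather than an error on your part, but you should flag the discrepancy rather than silently pass over it.
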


\begin{remark}
First of all we observe that the intersection of these two domains is dense in the Hilbert space of $L_2$ sections of $L^k$ over $T^2$. The spectral theorem for unbounded normal operators therefore provides us with a spectral calculus for these operators (see e.g. section 5.3.17 - 19 in \cite{ANOW}). We shall make use of this in the following section where we consider the Ptolemy transformation. Because of the above mentioned denseness, it is enough to establish identities for bounded operators on this intersection (or on dense subspaces of this intersection), for them to hold on the entire space of $L_2$ sections of $L^k$ over $T^2$.
\end{remark}

\proof
We will provide the proof for a slight generalisation of the operator $q$ and leave the rest to the reader. Consider the operator ($a\not=0$)
\begin{align*}
  D &= a\nabla_{\frac{\partial}{\partial v'}} + cu' + dv'
\end{align*}
on the space
\begin{align*}
  \mathcal{H}' &= \{ s\in H_1([0,1]^2) \mid s(0,v') = e^{\pi ikv'}
  s(1,v'), s(u',0) = e^{-\pi iku'}s(u',1) \}.
\end{align*}

For $\widetilde s \in\mathcal{H}'$, we seek a solution
$s\in\mathcal{H}'$ to the following equation
\begin{align}
  \label{eq:10}
  (D-\lambda)s = \widetilde s
\end{align}
for $\lambda\in\setC$.

First we examine the equation
\begin{align}
  \label{eq:1}
  (D-\lambda)s &= 0.
\end{align}
We see that any solution has to be of the form
\begin{align*}
  s(u',v') &= r(u',v') e^{i\pi ku'v' -
    (\frac{c}{a}u'+\frac{1}{2}\frac{d}{a})v' + \frac{2}{a}v'}
\end{align*}
where
\begin{align*}
  (D-\lambda)s &= \frac{\partial r}{\partial v'}(u',v') e^{i\pi
    ku'v'-\frac{c}{a}u'v'-\frac{1}{2}\frac{d}{a}v'^2+\frac{2}{a}v'}\\
  &+ a(i\pi ku'-(\frac{c}{a}u' - \frac{d}{a}v' + \frac{2}{a}))s\\
  &- (i\pi kau'-cu' - dv' + \lambda)s\\
  &= 0
\end{align*}
so we need that
\begin{align*}
  \frac{\partial r}{\partial v'}(u',v') &= 0.
\end{align*}
Hence the general solution to~\eqref{eq:1} is given by the expression
\begin{align}
  s(u',v') &= r(u') e^{i\pi ku'v' - \frac{c}{a}u'v' -
    \frac{1}{2}\frac{d}{a}v'^2 + \frac{\lambda}{a}v'}
\end{align}
where $c$ is any function such that $s \in H_1([0,1]^2)$.
Now we check that
\begin{align}
\nonumber
  s(u'+1,v') &= r(u'+1)e^{i\pi k(u'+1)v' - \frac{c}{a}(u'+1)v' 
    - \frac{1}{2}\frac{d}{a}v'^2 + \frac{2}{a}v'}\\
    \nonumber
  &= e^{\pi ikv'} s(u',v')
\end{align}
So we need
\begin{align}
\nonumber
  r(u'+1) e^{-\frac{c}{a}v'} &= r(u')
\end{align}
and
\begin{align}
\nonumber
  S(u',v'+1) &= r(u')e^{i\pi ku'(v'+1) - \frac{c}{a}u'(v'+1) +
    \frac{\lambda}{a}(v'+1) - \frac{1}{2}\frac{d}{a}(v'+1)^2}\\
    \nonumber
  &= e^{-\pi iku'} s(u',v')
\end{align}
hence we must have that
\begin{align}
 \nonumber
  r(u') e^{2\pi iku' - \frac{c}{a}u' + \frac{\lambda}{a} -
    \frac{d}{a}v' - \frac{1}{2}\frac{d}{a}} &= r(u')
\end{align}
from which we conclude that either $c\equiv 0$ or
\begin{align}
  \nonumber
  \frac{c}{a} &= 0 &
  \frac{d}{a} &= 0
\end{align}
and
\begin{align}
 \nonumber
  k &= 0 \qquad\text{and}\qquad  \frac{\lambda}{a} \in 2\pi i\setZ
\end{align}
Unless $c=0,k=0$ and $\lambda\in 2\pi ia\setZ$ then~\eqref{eq:1} only
has the zero solution. Otherwise it has the solutions
\begin{align}
  \nonumber
  s(u',v') &= r(u')e^{2\pi iuv'}
\end{align}
where $r(u'+1) = r(u')$.

Returning to~\eqref{eq:10}, any solution can be written on the form
\begin{align*}
  s(u',v') &= r(u',v')e^{i\pi ku'v' - \frac{c}{a}u'v' -
    \frac{1}{2}\frac{d}{a}v'^2 + \frac{\lambda}{a}v'}
\end{align*}
where
\begin{align*}
  \frac{\partial r}{\partial v'}(u',v') &= e^{-i\pi ku'v' +
    \frac{c}{a}u'v' + \frac{1}{2}\frac{d}{a}v'^2 -
    \frac{\lambda}{a}u'} \widetilde s(u',v')
\end{align*}

Thus
\begin{align*}
  r(u',v') &= \tilde r(u') + \int_0^{v'} e^{-i\pi ku'\widetilde v'
    + \frac{c}{a}u'\widetilde v'
    - \frac{1}{2}\frac{d}{a}\widetilde v'^2 
    - \frac{\lambda}{a}\widetilde v'} \widetilde s(u',\widetilde
  v')d\widetilde v'
\end{align*}
Now we compute
\begin{align*}
  s(u'+1,v') &= r(u'+1,v') e^{i\pi kv'} e^{-\frac{c}{a}v'} e^{i\pi
    ku'v' - \frac{c}{a}u'v' + \frac{\lambda}{a}v' -
    \frac{1}{2}\frac{d}{a}v'^2}\\
  &= e^{\pi ikv'}s(u',v')
\end{align*}
so we must have
\begin{align*}
  \tilde r(u'+1) &+ \int_0^{v'} e^{-i\pi k(u'+1)\widetilde v' +
    \frac{c}{a}(u'+1)\widetilde v' - \frac{1}{2}\frac{d}{a}\widetilde
    v'^2 - \frac{\lambda}{a}\widetilde v'} e^{ik\pi \widetilde v'}
  \widetilde s(u', \widetilde v') dv'\\
  &= e^{\frac{c}{a}v'}(\tilde r(u') + \int_0^{v'} e^{-ik\pi
    u'\widetilde v' + \frac{c}{a}u'\widetilde v' +
    \frac{1}{2}\frac{d}{a}v'^2 + \frac{\lambda}{a}\widetilde v'}
  s(u',\widetilde v')d\widetilde v')
\end{align*}
Thus
\begin{align}
  \label{eq:11}
  \tilde r(u'+1) - e^{\frac{c}{a}v'}\widetilde r(u') &= 
  -\int_0^{v'} (e^{\frac{c}{a}v'} - e^{\frac{c}{a}\widetilde v'})
  e^{-ik\pi u'\widetilde v' + \frac{1}{2}\frac{d}{a}\widetilde v'^2 +
    \frac{c}{a}u'\widetilde v' - \frac{\lambda}{a}\widetilde v'}
  s(u',\widetilde v')d\widetilde v'
\end{align}
Further we find that
\begin{align*}
  s(u',v'+1) &= r(u',v'+1)e^{i\pi ku'} e^{-\frac{c}{a}u'}
  e^{\frac{\lambda}{a}} e^{-\frac{d}{a}v'} e^{-\frac{d}{2a}} 
  e^{i\pi ku'v'-\frac{c}{a}u'v' + \frac{\lambda}{a}v' -
    \frac{1}{2}\frac{d}{a}v'^2}\\
  &= e^{-\pi iku'} s(u',v')
\end{align*}
hence we require
\begin{align}
  \nonumber
  \widetilde r(u') &+ \int_0^{v'+1} e^{-i\pi ku'\widetilde v' +
    \frac{1}{2}\frac{d}{a}\widetilde v'^2 + \frac{c}{a}u'\widetilde v'
    - \frac{\lambda}{a}\widetilde v'} \widetilde s(u',\widetilde
  v')d\widetilde v'\\
  \label{eq:12}
  &= e^{-2\pi iku'} e^{\frac{c}{a}u'} e^{-\frac{\lambda}{a}}
  e^{\frac{d}{a}v'} e^{\frac{d}{2a}}
  (\tilde r(u') + \int_0^{v'} e^{-i\pi ku'\widetilde v' +
    \frac{1}{2}\frac{d}{a}\widetilde v'^2 + \frac{c}{a}u'\widetilde v'
    - \frac{\lambda}{a}\widetilde v'} \widetilde s(u', \widetilde v')
  d\widetilde v')
\end{align}

Now we let $u' = 0$ in~\eqref{eq:11} we obtain
\begin{align*}
  \tilde r(1) - e^{\frac{c}{a}v'}\tilde r(0) &= \int_{0}^{v'}
  (e^{\frac{c}{a}v'} - e^{\frac{c}{a}\widetilde v'})
  e^{-\frac{\lambda}{a}\widetilde v' +
    \frac{1}{2}\frac{d}{a}\widetilde v'^2} s(0,\widetilde v')
  d\widetilde v'
\end{align*}
and setting $v' = 0$ in~\eqref{eq:12} we get
\begin{align*}
  (1 - e^{-2\pi iku'} e^{\frac{c}{a}u'} e^{-\frac{\lambda}{a}}
  e^{\frac{d}{2a}})\tilde r(u') &= -\int_0^1 
  e^{-i\pi ku'\widetilde v' +
    \frac{1}{2}\frac{d}{a}\widetilde v'^2 + \frac{c}{a}u'\widetilde v'
    - \frac{\lambda}{a}\widetilde v'} \widetilde s(u',\widetilde
  v')d\widetilde v'
\end{align*}
Thus if
\begin{align*}
  (c-2\pi iak)[0,1] + d[0,1] + \frac{d}{2} + 2\pi ia\setZ\\
  \lambda\not\in (c-2\pi iak)[0,1] + 2\pi ia\setZ
\end{align*}
Then we can solve for $\tilde r$
\begin{align*}
  \tilde r(u') &= \frac{1}{e^{-2\pi iku'} e^{\frac{c}{a}u'}
    e^{-\frac{\lambda}{a}} e^{\frac{d}{a}v'} e^{\frac{d}{2a}} - 1}
  \int_0^1 e^{-i\pi ku'\widetilde v' +
    \frac{1}{2}\frac{d}{a}\widetilde v'^2 + \frac{c}{a}u'\widetilde v'
  - \frac{\lambda}{a}\widetilde v'}s(u',\widetilde v') d\widetilde v'
\end{align*}
But going back to equation~\eqref{eq:11} for $u'=0$ we proceed by
differentiating in $v'$ to obtain
\begin{align*}
  -\frac{c}{a}e^{\frac{c}{a}v'} \tilde r(0) &= \frac{c}{a}
  e^{\frac{c}{a}v'} \int_{0}^{v'} e^{-\frac{\lambda}{a}\widetilde v' +
    \frac{1}{2}\frac{d}{a}\widetilde v'^2} s(0,\widetilde v')d\widetilde
  v' 
  + e^{\frac{c}{a}v'} e^{-\frac{\lambda}{a}v' +
    \frac{1}{2}\frac{d}{a}v'^2 } s(0,v')
  - e^{\frac{c}{a}v'} e^{-\frac{\lambda}{a}v' +
    \frac{1}{2}\frac{d}{a}v'^2} s(0,v')
\end{align*}
hence
\begin{align*}
  -\tilde r(0) &= \int_0^{v'} e^{-\frac{\lambda}{a}\widetilde v' +
    \frac{1}{2}\frac{d}{a}\widetilde v'^2} s(0,\widetilde v')
  d\widetilde v'
\end{align*}
which implies that we need to require that
\begin{align}
  \label{eq:14}
  s(0, \widetilde v') &= 0
\end{align}
and then $\tilde r(0) = \tilde r(1) = 0$.

Returning to our formula for $\tilde r$, we see that~\eqref{eq:14}
does indeed imply $\tilde r(0)=\tilde r(1) = 0$.
Thus we conclude that $D$ with the domain
$${\mathcal D}(D) = \{s\in\cH' \mid s(0,v') = 0\}$$
has spectrum
$$ \sigma(D) = (c-2\pi iak)[0,1] + 2\pi ia\setZ.$$

\eproof
We see immediately from the definition of our unitary structure that we have the following proposition.

\begin{proposition}
We have that the following adjoint relations with respect to the unitary structure (\ref{US})
$$ q^* = \tilde q, p^* = \tilde p.$$
\end{proposition}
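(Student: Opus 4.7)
The plan is to verify both adjoint identities by a direct computation with the twisted unitary structure. I would first write $q,\tilde q,p,\tilde p$ as explicit first-order differential operators on polarized sections restricted to the transversal $\setR^2$, substituting the formulas (\ref{pro1})--(\ref{pro2}) into $q=ibP_u$, $\tilde q=-i\bar bP_{\bar u}$, $p=ibP_v$, $\tilde p=i\bar bP_{\bar v}$ and simplifying via $t=4\pi b^2$. I would then expand $(qs_1,s_2)=\int_{T^{2}}(qs_1)\overline{Ss_2}\,\omega$ and integrate by parts in $v'$ to transfer the derivative from $s_1$ onto $\overline{Ss_2}$. The boundary terms from integration over a fundamental domain $[0,1]^2$ of $\setZ^2$ must be shown to vanish; this uses the quasi-periodicity defining $\cH$ together with the compatibility identities $Ss(u',1)=e^{-ik\pi u'}Ss(u',0)$ and $Ss(1,v')=e^{ik\pi v'}Ss(0,v')$, both consequences of how $S$ interacts with the multipliers $e_{(1,0)}$ and $e_{(0,1)}$.

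Once the boundary contributions are dispatched, the resulting bulk integral can be reorganised using the commutation identities $\partial_{v'}\circ S=-S\circ\partial_{u'}$ and $\partial_{u'}\circ S=S\circ\partial_{v'}$ (immediate from $S(u',v')=(-v',u')$), together with $u'\cdot Ss=S(v's)$ and $v'\cdot Ss=-S(u's)$, and the rule that any complex scalar $\alpha$ crossing the complex conjugation becomes $\bar\alpha$. The goal is to recognise the outcome as $\int s_1\overline{S(\tilde qs_2)}\,\omega=(s_1,\tilde qs_2)$: the conjugation replaces $ib$ by $-i\bar b$, while the $S$-twist converts differentiation in $v'$ into differentiation in $u'$, effectively swapping the structure of $P_u$ for that of $P_{\bar u}$. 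The argument for $p^{*}=\tilde p$ is entirely analogous, obtained by interchanging the roles of $u'$ and $v'$ throughout.

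The main subtlety I foresee is that the connection-form contribution $-ik\pi u'$ hidden inside $\nabla_{\partial_{v'}}$, the multiplication term $ibu'$ coming from $q$, and the corrections generated by quasi-periodicity at the boundary all need to combine correctly to reproduce the precise coefficient appearing in $\tilde q$. The twist by $S$ in the unitary structure is engineered exactly so that these competing contributions cancel against each other: once the bookkeeping of $b$ versus $\bar b$ and of the various signs has been carried out, the adjoint identity should collapse cleanly, which is the reason the paper can assert that the result follows immediately from the definition of the unitary structure.
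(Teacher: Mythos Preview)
Your proposal is correct and is in fact substantially more detailed than the paper's own treatment: the paper offers no argument beyond the single sentence ``We see immediately from the definition of our unitary structure that we have the following proposition,'' leaving the verification entirely to the reader. Your plan---write $q,\tilde q,p,\tilde p$ explicitly on the transversal, integrate by parts, check that the quasi-periodicity conditions kill the boundary terms on a fundamental domain, and then use the intertwining identities $\partial_{v'}\circ S=-S\circ\partial_{u'}$, $\partial_{u'}\circ S=S\circ\partial_{v'}$, $u'\cdot S=S\circ v'$, $v'\cdot S=-S\circ u'$ together with complex conjugation of scalars---is exactly the computation one must perform to substantiate the paper's claim.

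One small remark on execution: the adjoint rules you will need can be packaged cleanly as $(\nabla_{\partial_{v'}})^{*}=\nabla_{\partial_{u'}}$ and $(\nabla_{\partial_{u'}})^{*}=-\nabla_{\partial_{v'}}$ with respect to the $S$-twisted pairing, once the boundary terms are disposed of. These follow immediately from your four $S$-intertwining identities and already absorb the connection-form contributions $\pm i\pi k u'$, $\pm i\pi k v'$; working at the level of the covariant derivatives rather than decomposing into $\partial$ plus zeroth-order pieces will spare you some of the bookkeeping you anticipate in your final paragraph. With that shortcut the identities $q^{*}=\tilde q$ and $p^{*}=\tilde p$ reduce to matching a handful of scalar coefficients under $b\mapsto\bar b$, which is presumably what the authors had in mind when they called the result immediate.
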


We shall now make contact between the geometric constructions in this section and the LCA approach to these TQFT's we have presented in Section \ref{GF}. We do this via the $k$'th order Weil-Gel'fand-Zak transform given in the following proposition.

\begin{proposition}
  \label{prop:1}
  We have an isomorphism
  \begin{align}
  \nonumber
      W^{(k)}\colon S(\setR)\otimes \setC^k \longrightarrow
      \cH
  \end{align}
  given by
  \begin{align}
  \nonumber
      W^{(k)}(f_0, \ldots, f_{k-1})(u',v') &= e^{-k\pi iu'v'}
      \sum_{j=0}^{k-1}\sum_{m\in \setZ} f_j(u' + \frac{m}{k}) e^{-2\pi
        imv'} e^{2\pi iju'},
  \end{align}
  which extend to an isometry with respect to the following inner product on $S(\setR)\otimes \setC^k$.
\end{proposition}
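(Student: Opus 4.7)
The plan is to verify in turn that $W^{(k)}$ lands in $\cH$, is injective and surjective, and defines an isometry; all four pieces reduce to Fourier-series bookkeeping of Weil--Gel'fand--Zak type.

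\textbf{Membership in $\cH$.} First I would check the two quasi-periodicity conditions. Under $u'\mapsto u'+1$, the prefactor $e^{-k\pi iu'v'}$ gains $e^{-k\pi iv'}$ and, after the re-indexing $m\mapsto m+k$ (using $e^{2\pi ij}=1$), the sum gains $e^{2\pi ikv'}$; the product yields the required multiplier $e^{\pi ikv'}$. Under $v'\mapsto v'+1$ the identity $e^{-2\pi im}=1$ kills the shift in $m$, and only $e^{-k\pi iu'}=e^{-\pi iku'}$ survives from the prefactor.

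\textbf{Injectivity and surjectivity.} Both follow from a single Fourier analysis. Given $s\in\cH$, set $\widetilde s(u',v')=e^{k\pi iu'v'}s(u',v')$; the quasi-periodicity of $s$ becomes $\widetilde s(u',v'+1)=\widetilde s(u',v')$ together with $\widetilde s(u'+1,v')=e^{2\pi ikv'}\widetilde s(u',v')$. Expanding $\widetilde s(u',v')=\sum_m a_m(u')e^{-2\pi imv'}$ in $v'$, the second relation becomes $a_{m+k}(u')=a_m(u'+1)$. Matching coefficients against $W^{(k)}(f)$ yields $a_m(u')=\sum_{j=0}^{k-1}f_j(u'+m/k)e^{2\pi iju'}$; substituting $v=u'+m/k$ and letting $m$ run through $\{0,\ldots,k-1\}$ turns this into an invertible discrete Fourier transform with explicit inverse
\begin{equation*}
f_j(v) = \frac{e^{-2\pi ijv}}{k}\sum_{r=0}^{k-1}a_r(v-r/k)e^{2\pi ijr/k}.
\end{equation*}
Vanishing of $s$ forces all $a_r$, hence all $f_j$, to vanish (injectivity), while for any smooth $s\in\cH$ the formula yields a preimage whose Schwartz decay follows from the smoothness of $s$ and the relation $a_{m+k}(u')=a_m(u'+1)$ (surjectivity onto the smooth subspace).

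\textbf{Isometry.} I would pair $W^{(k)}(f)$ against $W^{(k)}(g)$ under the twisted form $\int_{T^2}s_1\overline{Ss_2}\,\omega$ and expand both lattice sums. The $v'$-integration over $[0,1]$ extracts Kronecker deltas from the factors $e^{-2\pi i(m-m')v'}$; splitting $m=nk+r$ and summing over $r$ collapses the $j,j'$ sums via $\sum_{r=0}^{k-1}e^{-2\pi i(j-j')r/k}=k\delta_{jj'}$; the remaining $u'$-integrals reassemble into integrals over $\setR$ of pairings of $f_j$ against $g_j$. The result is precisely the inner product on $S(\setR)\otimes\setC^k$ stated just after the proposition, and the equality extends by density of $S(\setR)$ in $L^2(\setR)$.

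\textbf{Main obstacle.} The delicate point is the bookkeeping in the isometry step: the symplectomorphism $S$ mixes $u'$ with $v'$, so the phase $e^{-2k\pi iu'v'}$ arising in $W^{(k)}(f)\cdot\overline{SW^{(k)}(g)}$ couples to both lattice sums non-trivially before the orthogonality relations can be applied. A cleaner alternative would be to verify that $W^{(k)}$ intertwines the four commuting normal operators $q,p,\widetilde q,\widetilde p$ with their canonical Schr\"odinger realizations on $S(\setR)\otimes\setC^k$; irreducibility would then pin $W^{(k)}$ down up to a scalar, and only one norm need be computed explicitly.
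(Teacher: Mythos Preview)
Your proposal is correct and follows essentially the same route as the paper: both verify the two quasi-periodicity conditions by direct substitution, then invert $W^{(k)}$ by stripping the prefactor $e^{-k\pi iu'v'}$, taking Fourier coefficients in $v'$, and undoing the residual discrete Fourier transform in the index $j$. The paper is slightly more explicit in that it writes out the inverse formula in closed form and then checks by a direct (rather long) computation that $W^{(k)}\circ (W^{(k)})^{-1}=\mathrm{Id}$ on $\cH$, whereas you argue more structurally via the relation $a_{m+k}(u')=a_m(u'+1)$; these are the same argument in different clothing. One remark: the paper's proof does not actually carry out the isometry step at all (the ``following inner product'' announced in the statement is never written down), so your discussion of that point, and your alternative via intertwining of $q,p,\tilde q,\tilde p$, goes beyond what the paper provides.
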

\begin{proof}
  We need to check that $W^{(k)}$ maps into $\cH$. Let $(f_0,\ldots f_{k-1})\in S(\setR)\otimes \setC^k$. Then it is clear that $W^{(k)}(f_0,\ldots, f_{k-1})$ is a well-defined smooth function on $\setR^2$ and we compute
  \begin{align}
  \nonumber
    W^{(k)}(f_0, \ldots, f_{k-1})(u'+1,v') &= e^{-\pi ik (u'+1)v'}
    \sum_{j=0}^{k-1}\sum_{m\in \setZ} f_j(u' + 1 + \frac{m}{k})
    e^{-2\pi imv'} e^{2\pi ij(u'+1)} \\
  \nonumber
    &= e^{-\pi ikv'}e^{-\pi iku'v'} \sum_{j=0}^{k-1}\sum_{m'\in\setZ}
    f_j(u'+\frac{m'}{k}) e^{-2\pi i(m'-k)v'}e^{2\pi iju'}\\
  \nonumber
    &= e^{\pi ikv'} W^{(k)}(f_0,\ldots,f_{k-1})(u',v')
  \end{align}
  and
  \begin{align}
  \nonumber
    W^{(k)}(f_0, \ldots, f_{k-1})(u', v'+1) &= e^{-\pi iku'}
    W^{(k)}(f_0, \ldots, f_{k-1})(u',v').
  \end{align}
  Hence we conclude that $W^{(k)}(f_0,\ldots,f_{k-1})\in \cH$.
 
Let $f_j(u') = f_j(u')e^{2\pi iju'}$ and define
\begin{align}
  \nonumber
  g_{j'}(u') &= \int_0^1 W^{(k)}(f_0,\ldots,f_{k-1})(u',v')e^{k\pi
    iu'v'}e^{2\pi ij'v'}dv'\\
  \nonumber
  &= \sum_{j=0}^{k-1}\sum_{m\in\setZ} \widetilde
  f_j(u'+\tfrac{m}{k})e^{-2\pi ij\frac{m}{k}} \int_0^1 e^{2\pi
    i(j'-m)v'} dv'\\
  \nonumber
  &= \sum_{j=0}^{k-1} \widetilde f_j(u'+\tfrac{j'}{k})e^{-2\pi
    i\frac{jj'}{k}}, \quad j'=0,\ldots k-1
\end{align}
Now we let 
\begin{align}
  \nonumber
  \widetilde g_{j'}(u') &= g_{j'}(u' - \tfrac{j'}{k})
  = \sum_{j=0}^{k-1} \widetilde f_j(u')e^{-2\pi i\frac{jj'}{k}}
\end{align}
Then
\begin{align}
  \nonumber
  \frac{1}{k}\sum_{j'=0}^{k-1} \widetilde g_{j'}(u') e^{2\pi
    i\frac{j_0j'}{k}} &=
  \frac{1}{k}\sum_{j=0}^{k-1} \widetilde f_j(u') \sum_{j'=0}^{k-1}
  e^{2\pi i\frac{(j_0-j)j'}{k}}\\
  \nonumber
  &= \sum_{j=0}^{k-1} \widetilde f_j(u') \delta_{j,j_0} \\
   \nonumber
  &= f_{j_0}(u')e^{2\pi ij_0u'}
\end{align}
So
\begin{align}
  \nonumber
  f_{j_0}(u') =  & e^{-2\pi ij_0u'}\cdot \\
  \nonumber
&\frac{1}{k}\sum_{j'=0}^{k-1}
  e^{2\pi i\frac{j_0j'}{k}} \int_0^1
  W^{(k)}(f_0,\ldots,f_{k-1})(u'-\tfrac{j'}{k}, v') e^{k\pi
    i(u'-\frac{j'}{k})v'} e^{2\pi ij'v'}dv'
\end{align}
Now compute any for $s\in \cH$
\begin{align}
  \nonumber
  f_{j_0}(u') &= e^{-2\pi ij_0u'} \frac{1}{k}\sum_{j'=0}^{k-1} e^{2\pi
    i\frac{j_0j'}{k}} \int_0^1 s(u'-\tfrac{j'}{k}, v'') e^{k\pi
    i(u'-\frac{j'}{k})v''} e^{2\pi ijv''}dv''
\end{align}
and so we get that
\begin{align}
  \nonumber
  W^{(k)}(f_0,\ldots,f_{k-1})(u',v')
  &= e^{-k\pi
    iu'v'}\sum_{j_0=0}^{k-1}\sum_{m\in\setZ} e^{-2\pi ij_0\frac{m}{k}}
   \Biggl[\frac{1}{k} \sum_{j=0}^{k-1} e^{2\pi i\frac{j_0j'}{k}} \cdot\\
  \nonumber
  &\int_0^1
  s(u'-\tfrac{j'}{k}+\tfrac{m}{k}, v'')e^{k\pi
    i(u'-\frac{j'}{k}+\frac{m}{k})v''} e^{2\pi ij'v''} e^{-2\pi imv'}
  dv'' \Biggr]\\
  \nonumber
  &= e^{-k\pi iu'v'}\sum_{j_0=0}^{k-1}\sum_{r=0}^{k-1}\sum_{l\in\setZ}
  e^{-2\pi iklv'}e^{-2\pi irv'} \Biggl[ \frac{1}{k}\sum_{j'=0}^{k-1}
  e^{2\pi i\frac{j_0j'}{k}}\\
  \nonumber
  &\int_0^1 s(u'+\tfrac{r-j'}{k}, v'') e^{\pi iklv''} e^{-2\pi
    i\frac{j_0r}{k}} e^{k\pi i(u'-\frac{r-j'}{k})v''} e^{k\pi i klv''}
  e^{2\pi ij'v''} dv'' \Biggr]\\
  \nonumber
  &= e^{-k\pi iu'v'} \sum_{r=0}^{k-1} \sum_{l\in\setZ} e^{-2\pi
    i(kl+r)v'} \int_0^1 s(u',v'') e^{k\pi iu'v''} e^{2\pi
    i(kl_r)v''}dv''\\
  \nonumber
  &= e^{-k\pi iu'v'} s(u',v') e^{k\pi iu'v'}\\
  \nonumber
  &= s(u',v')
\end{align}
Thus
\begin{align}
  \nonumber
  (W^{(k)})^{-1}\colon \cH \to S(\setR) \otimes \setC^k
\end{align}
is given by
\begin{align}
  \nonumber
  ((W^{(k)})^{-1}(s))_{j_0}(u') = & e^{-2\pi ij_0u'} \\
  \nonumber
  & \frac{1}{k}
  \sum_{j'=0}^{k-1} e^{2\pi i\frac{j_0j'}{k}} \int_0^1 s(u' -
  \tfrac{j'}{k}, v'') e^{k\pi i(u'-\frac{j'}{k})v''} e^{2\pi ijv''} dv''
\end{align}
\end{proof}

\begin{proposition}
We have the following description of the operators $U,V,\tilde U, \tilde V$ conjugated by the k'th order Weil-Gel'fand-Zak transform
\begin{align}
  \nonumber
    (W^{(k)})^{-1}U W^{(k)}(f_0,\ldots f_{k-1}) &= (e^{2\pi i k b^{-2} u'} f_{1}(u'), e^{2\pi i k b^{-2} u'} f_{2}(u'), \ldots, e^{2\pi i k b^{-2} u'} f_{0}(u'))\\
     \nonumber
    (W^{(k)})^{-1}V W^{(k)}(f_0,\ldots f_{k-1}) &= ( f_{0}(u' + k^{-1}),  f_{1}(u' + k^{-1}), \ldots,  f_{k-1}(u' + k^{-1}))\\
     \nonumber
(W^{(k)})^{-1}\tilde U W^{(k)}(f_0,\ldots f_{k-1}) &= (f_{0}(u' -\bar  b^{-2}-k^{-1}), e^{2\pi i  \bar b^{-2} } f_{1}(u' -\bar b^{-2}-k^{-1}), \\
 \nonumber
& \phantom{hjhhh} \ldots, e^{2\pi i  \bar b^{-2} (k-1)} f_{k-2}(u'-\bar b^{-2}-k^{-1}))\\
 \nonumber
 (W^{(k)})^{-1}\tilde V W^{(k)}(f_0,\ldots f_{k-1}) &= ( f_{1}(u'),  f_{2}(u'), \ldots,  f_{0}(u'))
    \end{align}
\end{proposition}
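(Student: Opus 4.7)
The plan is to express each of $U$, $V$, $\tilde U$, $\tilde V$ explicitly as a shift-times-multiplication operator on $\cH\subset C^\infty(\setR^2)$, substitute these formulas into the kernel defining $W^{(k)}$, reorganize the resulting double sum over $(j,m)\in\{0,\ldots,k-1\}\times\setZ$ into the form $W^{(k)}$ applied to a new tuple, and then invoke Proposition~\ref{prop:1} to identify the intertwining uniquely.

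First I would derive explicit formulas for $q$, $p$, $\tilde q$, $\tilde p$ as first-order operators on $\cH$ using the pre-quantum formulas~\eqref{pro1}--\eqref{pro2}, the polarization identities $\tilde\nabla_{\partial/\partial u''}=-i\tilde\nabla_{\partial/\partial u'}$ and $\tilde\nabla_{\partial/\partial v''}=i\tilde\nabla_{\partial/\partial v'}$, together with the $\bC$-linear extension $P_f=P_{\re f}+iP_{\im f}$. For instance, one finds $2\pi b^{-1}q=b^{-2}\tilde\nabla_{\partial/\partial v'}+2\pi iu'$; substituting $\tilde\nabla_{\partial/\partial v'}=\partial/\partial v'-ik\pi u'$ and using that $\partial/\partial v'$ commutes with multiplication by $u'$, the exponential $U=e^{2\pi b^{-1}q}$ factors as a translation in $v'$ composed with multiplication by an explicit exponential in $u'$; the computations for $V$, $\tilde U$, $\tilde V$ are analogous, with the translation variable swapped and $b$ possibly replaced by $\bar b$. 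Next I would substitute each such explicit operator into $W^{(k)}(f_0,\ldots,f_{k-1})(u',v')=e^{-k\pi iu'v'}\sum_{j,m}f_j(u'+m/k)e^{-2\pi imv'}e^{2\pi iju'}$ and reorganize: a translation in $v'$ produces an extra factor $e^{\pm 2\pi im/b^2}$ inside the $m$-sum together with a $u'$-dependent phase from passing through $e^{-k\pi iu'v'}$, while a translation in $u'$ shifts the argument of $f_j$, which by the quantization condition $2\re(b^2)=k$ splits into an integer shift of $m$ and a cyclic shift of $j$ modulo $k$. After this reorganization the multiplicative prefactors assemble into the claimed $e^{\pm 2\pi ikb^{-2}u'}$ and the tuple displayed in the proposition appears.

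The main obstacle will be the careful bookkeeping of the phases acquired when the translation is moved past $e^{-k\pi iu'v'}$ and when the finite sum over $j\in\{0,\ldots,k-1\}$ is reindexed cyclically. The cyclic reindexing $j\mapsto j\pm 1\pmod k$ generates an anomalous factor $e^{\pm 2\pi iku'}$ from the boundary term (which is a well-defined multiplier on $\cH$ because $k\in\setZ$), and this factor must cancel exactly against the phase produced by the translation-plus-multiplication prefactor to leave the surviving multiplier $e^{\pm 2\pi ikb^{-2}u'}$. Carrying out this cancellation explicitly in each of the four cases is the only algebra-heavy step; the $\tilde U,\tilde V$ formulas then differ from those for $U,V$ only by the substitution $b\to\bar b$ and a sign reversal of the shift, which accounts for the additional $j$-dependent phase $e^{2\pi i\bar b^{-2}j}$ appearing on the $\tilde U$ side.
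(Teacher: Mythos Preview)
The paper states this proposition without proof; there is nothing to compare against. Your proposal is the natural direct computation and the strategy is sound: your formula $2\pi b^{-1}q=b^{-2}\tilde\nabla_{\partial/\partial v'}+2\pi iu'$ is correct (and the analogous ones follow the same way from \eqref{pro1}--\eqref{pro2} with $t=4\pi b^2$), and once $U$, $V$, $\tilde U$, $\tilde V$ are written as commuting products of a translation and a multiplication operator, feeding them through the kernel of $W^{(k)}$ and reindexing the $(j,m)$ sum is exactly what is required. Two small remarks: the polarization identities you mention are not actually needed, since \eqref{pro1}--\eqref{pro2} already express all four $P$'s in terms of $\tilde\nabla_{\partial/\partial u'}$ and $\tilde\nabla_{\partial/\partial v'}$ alone; and when you invoke $2\Re(b^2)=k$ to split a shift by $b^{-2}$ or $\bar b^{-2}$ into an integer part and a residue in $\{0,\ldots,k-1\}$, be careful that it is the \emph{combination} of the $v'$-translation phase $e^{-2\pi im b^{-2}}$ with the cyclic reindexing of $j$ that produces the clean multiplier, not either piece separately. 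With that bookkeeping done, Proposition~\ref{prop:1} gives uniqueness of the preimage tuple and finishes the argument.
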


\section{The quantum representation of the Ptolemy groupoid}
\label{PtRep}

Recall from section \ref{MS}, that we need to consider the following birationale coordinate changes between the ratio coordinates, when we perform a Ptolemy flip of the triangulation, e.g. the $(u_1,v_2,u_2,v_2)$ on $(\setC^*)^{\times 2}$ are mapped to the coordinates $(w_1,z_2,\tilde w_2,\tilde z_2)$ on $(\setC^*)^{\times 2}$ by the birational map 
\begin{align}
  \nonumber
   w_1 &=  u_1 u_2,   & z_1 & =  u_1 v_2 + v_1 \\
  \nonumber
   w_2 &=  v_1 u_2 (u_1 v_2 + v_1)^{-1},  & z_2 &=  v_2(u_1 v_2 + v_1)^{-1} \\
  \end{align}
In each of the four sets of coordinates $(u_1,v_1)$, $(u_2,v_2)$, $(w_1,z_1)$ and $(w_2,z_2)$ we have the quantisation described in section \ref{GQ}, which yields the four Weil-pairs of operators
$$ (U_i,V_i), (W_i,Z_i), i = 1,2.$$ 
In is not hard to check that the above Ptolemy transformation of these coordinates preserves the holomorphic symplectic form induced from $\tilde \Omega$ in each pair.

Let us now consider the operator introduce by Faddeev jointly with the second author of this paper in connection with quantisation of decorated Teichm\"{u}ller theory in \cite{FaddevRinatQTeichm\"{u}ller}
\begin{align}
  \nonumber
  T_{b} = T_{b}(x_1,y_1,x_2,y_2) = \Psi_{b}(x_1 - y_1+y_2)e^{2\pi i y_1x_2}
\end{align}
where $\Psi_b$ is a solution to the functional equations
\begin{align}
  \nonumber
  \Psi_b(z+i\tfrac{b}{2}) &= \Psi_b(z-i\tfrac{b}{2})(1+e^{2\pi bz}),
  \quad z\in\setC, b\in\setC.
\end{align}
Here $(x_i,y_i)$ are operators with the commutations relations
$$ [x_i,y_i] = \frac{1}{2\pi i}$$
and
$$[x_1,x_2] = [x_1,y_2] = [y_1,x_2] = [y_1,y_2] = 0.$$
The assumption on $\Psi_b$ is such that $T_{b}$ can be defined in some operator algebra of unbounded operators containing $(x_i,y_i)$. We will return to this point below.
Under these assumptions we can derive a few commutation properties, which $T_b$ and the $(x_i,y_i)$ satisfies.  
\begin{proposition}
  \label{prop:2} We have the following relations
  \begin{enumerate}
  \item $T_b x_1 = (x_1+x_2)T_b $
  \item $T_b(y_1+x_2) = (y_1+x_2)T_b$
  \item $T_b(y_1+y_2) = y_2T_b$
  \item $T_be^{2\pi b y_1} = (e^{2\pi b(x_1+y_2)}+e^{2\pi b y_1})T_b$
  \end{enumerate}
  To prove these relations, we need the a couple of elementary formulae, which we repeat here for the convenience of the reader.
\end{proposition}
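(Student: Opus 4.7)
The plan is to write $T_b = \Psi_b(A)\,E$ with $A := x_1 - y_1 + y_2$ and $E := e^{2\pi i y_1 x_2}$, and to compute each conjugation $T_b X T_b^{-1}$ by first pushing $X$ through $E$ and then through $\Psi_b(A)$. The elementary facts that will be invoked are the Weyl--Heisenberg Baker--Campbell--Hausdorff formula $e^X e^Y = e^{X+Y}\, e^{[X,Y]/2}$ valid whenever $[X,Y]$ is central; the shift identity $f(A)\,e^{\alpha Y} = e^{\alpha Y}\, f(A + \alpha[A,Y])$ when $[A,Y]$ is central; and, for relation~(4), the Faddeev functional equation $\Psi_b(z+ib/2) = \Psi_b(z-ib/2)(1+e^{2\pi b z})$.

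The first stage is clean because the adjoint action of the quadratic generator $2\pi i y_1 x_2$ on the Heisenberg algebra is nilpotent of order two --- we have $[y_1 x_2, x_2] = [y_1 x_2, y_1] = 0$ --- so $E\, O\, E^{-1} = O + [2\pi i y_1 x_2, O]$ after a single commutator. A direct computation then yields $E\, x_1\, E^{-1} = x_1 + x_2$, $E\, y_2\, E^{-1} = y_2 - y_1$, $E\, y_1\, E^{-1} = y_1$, and $E\, x_2\, E^{-1} = x_2$, and in particular $E$ commutes with $e^{2\pi b y_1}$. For relations (1)--(3) this reduces the problem to showing that $x_1+x_2$, $y_1+x_2$, and $y_2$ each commute with $A$: the two canonical contributions in $[A, x_1+x_2]$ and in $[A, y_1+x_2]$ cancel against one another, and $[A, y_2]=0$ is immediate from the commutation relations. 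Hence each of these three expressions passes through $\Psi_b(A)$ without correction, establishing (1)--(3) at once.

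The substance of the proof is in (4). Since $E$ commutes with $e^{2\pi b y_1}$, only the conjugation by $\Psi_b(A)$ has to be done. The shift identity gives $\Psi_b(A)\,e^{2\pi b y_1}\,\Psi_b(A)^{-1} = e^{2\pi b y_1}\,\Psi_b(A + ib)\,\Psi_b(A)^{-1}$, where the shift $ib$ equals $2\pi b\cdot [A,y_1]$. Applying the functional equation at $z = A + ib/2$ converts $\Psi_b(A+ib)\Psi_b(A)^{-1}$ into $1 + e^{2\pi b(A+ib/2)}$, and a Baker--Campbell--Hausdorff computation inside the Heisenberg algebra identifies
$$
e^{2\pi b(A+ib/2)} \;=\; e^{2\pi b(x_1 - y_1 + y_2 + ib/2)} \;=\; e^{-2\pi b y_1}\, e^{2\pi b(x_1+y_2)},
$$
the half-shift $ib/2$ being precisely what is needed to cancel the BCH phase. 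Multiplying by $e^{2\pi b y_1}$ on the left then yields the desired right-hand side $e^{2\pi b y_1} + e^{2\pi b(x_1+y_2)}$.

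The main obstacle is the bookkeeping in the final step: keeping track of the phase $e^{\pi i b^2}$ produced by BCH and verifying that it cancels precisely against the factor $e^{2\pi b\cdot ib/2}=e^{\pi i b^2}$ coming from the argument shift, so that the exponential of the operator sum $A+ib/2$ factorizes cleanly into a product of two Weyl exponentials which then recombine with the prefactor $e^{2\pi b y_1}$ into the sum of two exponentials required on the right-hand side of~(4).
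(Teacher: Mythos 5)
Your proof is correct and follows essentially the same route as the paper: you push each operator through $e^{2\pi i y_1 x_2}$ and then through $\Psi_b(x_1-y_1+y_2)$, with your observation that $x_1+x_2$, $y_1+x_2$, $y_2$ commute with $x_1-y_1+y_2$ being exactly the paper's cancellation of the $\pm\frac{1}{2\pi i}\Psi_b'$ terms, and your treatment of (4) via the shift by $ib$, the functional equation at $z=A+ib/2$, and the BCH phase cancellation reproducing the paper's computation verbatim.
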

\begin{lemma}
  \label{lem:1}
  Suppose we have operators $x, y$ in an operator algebra, such that $z = [x,y]$ and $[z, x] =
  0$. Then
  \begin{align}
  \nonumber
    f(x)y &= yf(x) + zf'(x)\\
    \nonumber
    e^x f(x) &= f(y+z) e^x
  \end{align}
for every power series such that $f(x)$, $f'(x)$ and $f(y+z)$ can be defined in the same operator algebra.
\end{lemma}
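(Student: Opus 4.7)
The plan is to verify both identities first on monomials $f(t)=t^n$ and then to pass to arbitrary power series by linearity, using the hypothesis that all relevant series converge in the ambient operator algebra.

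For the first identity, I would establish by a telescoping commutator computation that $[x^n,y] = n z x^{n-1}$ for all $n\geq 0$. Writing
\begin{equation*}
[x^n,y] = \sum_{k=0}^{n-1} x^k[x,y]\,x^{n-1-k} = \sum_{k=0}^{n-1} x^k z\, x^{n-1-k},
\end{equation*}
the assumption $[z,x]=0$ lets one commute $z$ past every factor of $x$, so the sum collapses to $nzx^{n-1}$. Applied term-by-term to $f(x)=\sum_n a_n x^n$ this gives $f(x)y - y f(x) = z\sum_n n a_n x^{n-1} = z f'(x)$, which is the first identity.

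For the second identity, the key step is to compute $e^{x}\,y\,e^{-x}=y+z$. Since $\operatorname{ad}(x)(y)=[x,y]=z$ and $\operatorname{ad}(x)(z)=-[z,x]=0$, the iterated commutators $\operatorname{ad}(x)^k(y)$ vanish for all $k\geq 2$, so the Hadamard expansion
\begin{equation*}
e^{x}\,y\,e^{-x} \;=\; \sum_{k\geq 0}\frac{1}{k!}\operatorname{ad}(x)^k(y)
\end{equation*}
terminates after two terms and equals $y+z$. Iterating then gives $e^{x}\,y^n\,e^{-x}=(e^{x}ye^{-x})^n=(y+z)^n$ (no binomial expansion is attempted, as $y$ and $z$ need not commute), and summing the series for $f$ yields $e^{x}f(y)e^{-x}=f(y+z)$, i.e.\ $e^{x}f(y)=f(y+z)e^{x}$.

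The main obstacle is analytic rather than algebraic. The operators $x,y,z$ appearing in the intended applications (e.g.\ in Proposition~\ref{prop:2}, where they are built from the unbounded operators $p,q,\tilde p,\tilde q$ of the previous section) are unbounded, so the real work is to exhibit a common dense invariant core on which $f(x)$, $f'(x)$, $f(y)$, $f(y+z)$, $e^{x}$ and all the intermediate expressions are simultaneously defined and on which the formal manipulations above are legitimate. The hypothesis of the lemma assigns this verification to the caller; in the use cases at hand one checks it directly on Schwartz-type domains adapted to the Weil--Gel'fand--Zak picture of Proposition~\ref{prop:1}, where the polynomial and exponential expressions in $x_i,y_i$ act as well-defined unbounded operators with a common smooth core.
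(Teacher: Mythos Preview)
Your proof is correct. The treatment of the first identity is identical to the paper's: the same telescoping expansion $[x^n,y]=\sum_{k=0}^{n-1}x^k[x,y]x^{n-1-k}$ followed by commuting $z$ past the powers of $x$.

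For the second identity the paper takes a slightly more economical route. Instead of invoking the Hadamard expansion, it simply applies the first identity just proved with $f=\exp$ (so that $f'=\exp$ as well) to obtain $e^{x}y = y e^{x} + z e^{x} = (y+z)e^{x}$ in one stroke, and then inducts: $e^{x}y^{l} = (y+z)e^{x}y^{l-1} = \cdots = (y+z)^{l}e^{x}$. This never introduces $e^{-x}$ and needs no external input beyond the first identity. Your route via $e^{x}ye^{-x}$ and the truncated $\operatorname{ad}$-series is equally valid and arguably more conceptual, but it formally requires invertibility of $e^{x}$ and appeals to the Hadamard lemma, whereas the paper's argument is entirely self-contained. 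Your closing paragraph on domain issues goes beyond what the paper does; the paper treats the computation purely formally, leaving such matters to the surrounding discussion.
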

\begin{proof}
  Let
  \begin{align}
  \nonumber
    f(x) &= \sum_{j=0}^{\infty} a_j x^j.
  \end{align}
  Then
  \begin{align}
  \nonumber
    [f(x), y] &= \sum_j a_j[x^j, y]\\
  \nonumber
    &= \sum_j a_j \sum_{k=0}^{j-1}x^k[x,y]x^{j-k-1}\\
  \nonumber
    &= \sum_j a_j j z x^{j-1}\\
  \nonumber
    &= zf'(x)
  \end{align}
Let us now establish the second equation
  \begin{align}
  \nonumber
    e^xy^l &= ye^xy^{l-1} + ze^x y^{l-1}\\
  \nonumber
    &= (y+z)e^x y^{l-1}\\
  \nonumber
    &= (y+z)^l e^x.
  \end{align}
  So
  \begin{align}
  \nonumber
    e^x f(y) &= e^x \sum_{j=0}^{\infty} a_j y^j \\
  \nonumber
    &= \sum_{j=0}^{\infty} a_j (y+z)^j e^x\\
  \nonumber
    &= f(y+z)e^x
  \end{align}
\end{proof}
\begin{proof}[Proof of Proposition]
 (1)
  \begin{align}
  \nonumber
    T_bx_1 &= \Psi_b(x_1 - y_1 + y_2) e^{2\pi iy_1 x_2} x_1\\
  \nonumber
    &= \Psi_b(x_1 - y_1 + y_2) x_1 e^{2\pi iy_1 x_2} +
    \Psi_b(x_1-y_1+y_2)x_2 e^{2\pi iy_1 x_2}\\
  \nonumber
    &= (x_1+x_2)T_b - \frac{1}{2\pi i}\Psi_b'(x_1-y_1+y_2) + \frac{1}{2\pi
    i}\Psi_b'(x_1-y_1+y_2)\\
  \nonumber
  &= (x_1+x_2)T_b
  \end{align}
  (2)
  \begin{align}
  \nonumber
    T_b(y_1+x_2) &= \Psi_b(x_1-y_1+y_2)e^{2\pi iy_1x_2}(y_1+x_2)\\
  \nonumber
    &= \Psi_b(x_1-y_1+y_2)(y_1+x+2)e^{2\pi iy_1x_2}\\
  \nonumber
    &= (y_1+x_2)\Psi_b(x_1-y_1+y_2)e^{2\pi iy_1x_2} \\
  \nonumber
    &\quad + \frac{1}{2\pi
      i}(-\Psi_b'(x_1-y_1+y_2)+\Psi_b'(x_1-y_1+y_2))e^{2\pi iy_1x_2}\\
  \nonumber
    &= (y_1+x_2)\Psi_b(x_1-y_1+y_2)e^{2\pi iy_1x_2}\\
  \nonumber
    &= (y_1+x_2)T_b
  \end{align}
  (3)
  \begin{align}
  \nonumber
    T_b(y_1+y_2) &= \Psi_b(x_1-y_1+y_2)e^{2\pi iy_1x_2}(y_1+y_2)\\
  \nonumber
    &= \Psi_b(x_1-y_1+y_2)(y_1+y_2)e^{2\pi iy_1x_2} -
    \Psi_b(x_1-y_1+y_2)y_1e^{2\pi iy_1x_2} \\
  \nonumber
    &= (y_1+y_2)T_b - y_1T_b + \frac{1}{2\pi i}(\Psi_b'(x_1-y_1+y_2) -
    \Psi_b(x_1-y_1+y_2))e^{2\pi iy_1x_2}\\
  \nonumber
    &= y_2T_b
  \end{align}
  (4)
  \begin{align}
  \nonumber
    T_be^{2\pi by_1} &=\Psi_b(x_1-y_1+y_2)e^{2\pi iy_1x_2}e^{2\pi by_1}\\
  \nonumber
    &= \Psi_b(x_1-y_1+y_2)e^{2\pi by_1}e^{2\pi iy_1x_2}\\
  \nonumber
    &= e^{2\pi by_1}\Psi_b(x_1-y_1+y_2-\frac{b}{i})e^{2\pi iy_1x_2}\\
  \nonumber
    &= e^{2\pi by_1}(1+e^{2\pi
      b(x_1-y_1+y_2+\frac{ib}{2})})\Psi_b(x_1-y_1+y_2)e^{2\pi iy_1x_2}\\
  \nonumber
    &= (e^{2\pi by_1} + e^{2\pi
      b(x_1+y_2-\frac{ib}{2}+\frac{ib}{2})})\Psi_b(x_1-y_1+y_2)e^{2\pi
      iy_1x_2}\\
  \nonumber
    &= (e^{2\pi by_1} + e^{2\pi b(x_1+y_2)})T_b
  \end{align}
\end{proof}

Consider the entire function
\begin{align}
  \nonumber
  (x;q)_\infty &= \prod_{i=0}^\infty (1-xq^i)
\end{align}

\begin{theorem}
For any pair of Weil operators $(x,y)$ in an operator algebra, for which we can define $(x;q)_\infty$ and $(y;q)_\infty$, we have the following operator identity
\begin{align}
  \nonumber
  (y;q)_\infty (x;q)_\infty = (x;q)_\infty (-yx;q)_\infty (y;q)_\infty
\end{align}
\end{theorem}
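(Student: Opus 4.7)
The plan is to reduce the identity to a single conjugation computation together with a scalar identity obtained from Euler's $q$-exponential formula.

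First, I would exploit the Weil relation $xy = qyx$, which gives $f(x)\,y = y\,f(qx)$ for any formal series $f$ definable in the ambient operator algebra. Applied to $f = (x;q)_\infty$ and combined with the functional equation $(x;q)_\infty = (1-x)(qx;q)_\infty$, this yields
\begin{equation*}
(x;q)_\infty\, y = y\,(qx;q)_\infty = y(1-x)^{-1}(x;q)_\infty,
\end{equation*}
hence
\begin{equation*}
(x;q)_\infty^{-1}\, y\, (x;q)_\infty = y(1-x).
\end{equation*}
Since conjugation is an algebra homomorphism, and a short induction using $(1-x)y = y(1-qx)$ yields $(y(1-x))^n = y^n (x;q)_n$, I then obtain
\begin{equation*}
(x;q)_\infty^{-1}\,(y;q)_\infty\,(x;q)_\infty = (y(1-x);q)_\infty.
\end{equation*}

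With this in hand, multiplying the target identity on the left by $(x;q)_\infty^{-1}$ reduces the theorem to the single equation
\begin{equation*}
(y(1-x);q)_\infty \;\stackrel{?}{=}\; (-yx;q)_\infty\,(y;q)_\infty.
\end{equation*}
To check this I would expand both sides by Euler's formula $(z;q)_\infty = \sum_{n\geq 0} (-z)^n q^{n(n-1)/2}/(q;q)_n$. For the left side, a further expansion of $(x;q)_n$ by the $q$-binomial theorem produces a double sum in monomials $y^n x^k$ with coefficient $(-1)^{n+k} q^{n(n-1)/2 + k(k-1)/2}/((q;q)_k (q;q)_{n-k})$. For the right side, I use $(yx)^k = q^{k(k-1)/2} y^k x^k$ and $x^k y^l = q^{kl} y^l x^k$ to bring each term into the form $y^n x^k$ with $n = k+l$; a direct arithmetic simplification of the exponent $k(k-1) + (n-k)(n-k-1)/2 + k(n-k)$ collapses to $n(n-1)/2 + k(k-1)/2$, and the signs match as $(-1)^l = (-1)^{n-k} = (-1)^{n+k}$. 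Term-by-term agreement then finishes the reduction.

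The main obstacle is not algebraic but analytic: one must ensure that $(x;q)_\infty$ acts as an invertible operator on a common dense domain with $y$ and its powers, that the conjugation identity $(x;q)_\infty^{-1} y^n (x;q)_\infty = (y(1-x))^n$ truly propagates to the infinite series $(y;q)_\infty$, and that the resulting double sums converge in the relevant operator topology. These points are exactly what the hypothesis on the operator algebra is designed to provide; once granted, every rearrangement reduces to the scalar $q$-series calculation above.
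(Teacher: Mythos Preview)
The paper does not actually prove this theorem: it is stated without proof in Section~\ref{PtRep}, and in Section~\ref{GF} the identity is recalled as a known result first discovered in \cite{MR1264393}. So there is no paper proof to compare against; your task reduces to whether your argument stands on its own.

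It does. Your conjugation step $(x;q)_\infty^{-1}\,y\,(x;q)_\infty = y(1-x)$ follows cleanly from $f(x)y = yf(qx)$ and the functional equation $(qx;q)_\infty = (1-x)^{-1}(x;q)_\infty$, and the induction $(y(1-x))^n = y^n(x;q)_n$ is correct. The reduction to $(y(1-x);q)_\infty = (-yx;q)_\infty(y;q)_\infty$ is then a genuine scalar-coefficient check, and your $q$-binomial expansion matches: the exponent identity $k(k-1) + (n-k)(n-k-1)/2 + k(n-k) = n(n-1)/2 + k(k-1)/2$ is a one-line verification. Your remark about the analytic hypotheses is exactly right---the theorem is stated at the level of formal power series (or of an operator algebra where everything is assumed definable), and all rearrangements are justified at that level.

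For context, the approach in the original reference \cite{MR1264393} is slightly different: one typically shows that both sides satisfy the same first-order $q$-difference equation in one of the variables (using $(qx;q)_\infty = (1-x)^{-1}(x;q)_\infty$) and agree at a boundary value, which avoids the double-sum comparison. Your route via conjugation plus Euler expansion is equally valid and arguably more transparent, at the cost of a small combinatorial computation.
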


\begin{definition}
Let $\Psi_{b^{-1}}$ be the following entire function on the complex plane
$$ \Psi_{b^{-1}} (z) = (e^{2\pi(z + c_{b^{-1}})b^{-1}}, e^{i\pi b^{-2}})_\infty$$
\end{definition}

Standard domain theory for unbounded operators \cite{MR1009162,MR1009163} provides a dense subspace of $\cH'$ on which we can define the operator  $\Psi_{b^{-1}}(q_1 - p_1 + p_2)$ as an unbounded operator. We observe by Proposition \ref{spec}, that the closure of image under $\Psi_{\bar b^{-1}}$ of the spectrum of the operator $\tilde q_1 - \tilde p_1 + \tilde p_2$ does not contain zero, hence we can also define the inverse operator $\bar \Psi_{b^{-1}}(\tilde q_1 - \tilde p_1 + \tilde p_2)^{-1}$ on the same densely defined domain. 

\begin{definition}
We define the following unitary operator acting on $\cH'$
\begin{align}
  \nonumber
  \widetilde T &= \frac{\Psi_{b^{-1}}(q_1-p_1+p_2)}{\bar\Psi_{\bar b^{-1}}(\tilde
    q_1-\tilde p_1+\tilde p_2)} e^{2\pi i(p_1q_2+\tilde p_1\tilde q_2)}\\
  \nonumber
\end{align}
\end{definition}

The above comments on the independent definition of both the nominator and the denominator makes the proofs of following theorems very easy.

\begin{theorem}
  \label{thm:1}
  The operator
  \begin{align}
  \nonumber
  \widetilde T &= \frac{\Psi_b(u_1-v_1+v_2)}{\bar\Psi_{\bar b}(\bar
    u_1-\bar v_1+\bar v_2)} e^{2\pi i(v_1u_2+\bar v_1\bar u_2)}
  \end{align}
preserves the space $\cH$.
\end{theorem}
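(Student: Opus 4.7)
The plan is to verify that $\widetilde T$ preserves the two defining constraints of $\cH$. Recall from~\eqref{nf} that $\cH$ consists of sections $s$ satisfying $U_i^{b^2}s = \tilde U_i^{-\bar b^2}s$ and $V_i^{b^2}s = \tilde V_i^{\bar b^2}s$ (for $i=1,2$ in the two-copy tensor setting implicit in the formula for $\widetilde T$); what must be shown is that $\widetilde T$ commutes, on a common dense domain, with each of the constraint operators $U_i^{b^2}\tilde U_i^{\bar b^2}$ and $V_i^{b^2}(\tilde V_i^{\bar b^2})^{-1}$.

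The first step is to use the fact that holomorphic and antiholomorphic operators mutually commute to factor $\widetilde T = T_{\mathrm{hol}}\cdot T_{\mathrm{ahol}}$, where $T_{\mathrm{hol}} = \Psi_b(u_1-v_1+v_2)e^{2\pi iv_1u_2}$ is exactly the operator $T_b$ of Proposition 2 specialised to the Weil pair $(u_i,v_i)$, and $T_{\mathrm{ahol}} = \bar\Psi_{\bar b}(\bar u_1-\bar v_1+\bar v_2)^{-1}e^{2\pi i\bar v_1\bar u_2}$ is its mirror at parameter $\bar b$ in the antiholomorphic variables $(\bar u_i,\bar v_i)$. Conjugation of a purely holomorphic operator like $U_i^{b^2}$ or $V_i^{b^2}$ by $\widetilde T$ then reduces to conjugation by $T_{\mathrm{hol}}$ alone, and analogously for the tilded operators and $T_{\mathrm{ahol}}$.

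The second step is to apply Proposition 2 to carry out the conjugations explicitly. After the appropriate rescaling, relations (1)--(4) give formulas for $T_{\mathrm{hol}}^{-1}U_i^{b^2}T_{\mathrm{hol}}$ and $T_{\mathrm{hol}}^{-1}V_i^{b^2}T_{\mathrm{hol}}$ as polynomial expressions in the post-Ptolemy Weil operators $W_i^{b^2},Z_i^{b^2}$ from Section~\ref{PtRep}. The antiholomorphic mirror of Proposition 2, which follows algebraically from the functional equation satisfied by $\bar\Psi_{\bar b}$, produces identical polynomial expressions for the tilded operators at exponents $\mp\bar b^2$; the sign discrepancy is precisely what the inversion in the definition of $T_{\mathrm{ahol}}$ is designed to absorb. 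Since the underlying classical Ptolemy birational map is shared between the holomorphic and antiholomorphic sectors, the two sides of each constraint equation transform by the same formula, so that the post-conjugation constraints $W_i^{b^2}=\tilde W_i^{-\bar b^2}$, $Z_i^{b^2}=\tilde Z_i^{\bar b^2}$ hold on $\widetilde T s$ whenever the original constraints hold on $s$.

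The principal obstacle is the rigorous treatment of unboundedness: the identities of Proposition 2 are derived formally using the shift identity $\Psi_b(z+ib/2) = \Psi_b(z-ib/2)(1+e^{2\pi bz})$ and do not manifestly close on an invariant domain. As in the remark preceding the definition of $\widetilde T$, one must identify a dense subspace of analytic vectors on which $\Psi_b(u_1-v_1+v_2)$, $\bar\Psi_{\bar b}(\bar u_1-\bar v_1+\bar v_2)^{-1}$, the exponential $e^{2\pi i(v_1u_2+\bar v_1\bar u_2)}$, and all four Weil pairs act simultaneously; on that core the formal manipulations become legitimate, and Theorem~\ref{thm:1} then extends to all of $\cH$ by density.
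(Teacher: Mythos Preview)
Your approach is essentially the paper's: the proof there reads in full ``This follows directly from the commutation relations in Proposition~\ref{prop:2} together with~\eqref{nf},'' and your proposal is a faithful unpacking of that sentence---factor $\widetilde T$ into holomorphic and antiholomorphic parts, apply Proposition~\ref{prop:2} (and its $\bar b$-mirror) to each, and conclude that the defining constraints of $\cH$ in~\eqref{nf} are preserved. One small imprecision: you say one must show $\widetilde T$ \emph{commutes} with the constraint operators, but since $\widetilde T$ mixes the two tensor factors it cannot literally commute with the single-factor constraints; what you actually establish in step~2 (and what suffices) is that conjugation by $\widetilde T$ carries each constraint operator into the ideal generated by the four constraint operators, so the joint kernel is preserved.
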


\proof

This follows directly from the commutation relations in Proposition \ref{prop:2} together with (\ref{nf}).
\eproof

\begin{theorem}
The unitary operator $\widetilde T$ satisfies the following pentagon relation
$$ \widetilde T_{12}\widetilde T_{13}\widetilde T_{23} = \widetilde T_{23}\widetilde T_{12}$$
\end{theorem}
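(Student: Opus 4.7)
The plan is to decouple $\widetilde T$ into a holomorphic and an antiholomorphic factor and reduce to the classical Faddeev--Kashaev pentagon. Since $[q_i,\tilde q_j]=[q_i,\tilde p_j]=[p_i,\tilde q_j]=[p_i,\tilde p_j]=0$ on the common dense domain provided by the remark following Proposition~\ref{spec}, the two groups of generators commute, so we may write
\[
\widetilde T = T\cdot\bar T^{-1},\qquad T=\Psi_b(q_1-p_1+p_2)\,e^{2\pi i p_1 q_2},
\]
and $\bar T$ its tilded counterpart built from $\bar\Psi_{\bar b}(\tilde q_1-\tilde p_1+\tilde p_2)$ and $e^{-2\pi i\tilde p_1\tilde q_2}$. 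Because $T$ and $\bar T^{-1}$ commute, the pentagon relation for $\widetilde T$ decouples into an independent pentagon for each factor: it suffices to prove $T_{12}T_{13}T_{23}=T_{23}T_{12}$ and the analogous identity for $\bar T$.

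For the holomorphic factor I would proceed exactly as in the original Faddeev--Kashaev argument. Starting from the right-hand side $T_{23}T_{12}$, repeatedly apply the commutation relations of Proposition~\ref{prop:2} (attached to the pairs of indices $(12)$, $(13)$, $(23)$) together with Lemma~\ref{lem:1} to bring the product of dilogarithms into the form $\Psi_b(X)\Psi_b(Y)$ where $X=q_2-p_2+p_3$ and $Y=q_1-p_1+p_2$. One checks that $[X,Y]=-\tfrac{1}{2\pi i}$, so that $e^{2\pi b X}$ and $e^{2\pi b Y}$ form a $q$-commuting Weyl pair. The operator incarnation of the quantum dilogarithm identity
\[
\Psi_b(X)\Psi_b(Y)=\Psi_b(Y)\Psi_b(X+Y)\Psi_b(X),
\]
which is the exponentiated form of the theorem $(y;q)_\infty(x;q)_\infty=(x;q)_\infty(-yx;q)_\infty(y;q)_\infty$ applied to $x=e^{2\pi b X}$, $y=e^{2\pi b Y}$, then converts this into the three-factor product appearing on the left-hand side; finally the remaining exponentials are collected using Lemma~\ref{lem:1} to yield exactly $T_{12}T_{13}T_{23}$.

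The antiholomorphic pentagon $\bar T_{12}\bar T_{13}\bar T_{23}=\bar T_{23}\bar T_{12}$ follows by the same calculation with $b$ replaced by $\bar b$, $\Psi_b$ by $\bar\Psi_{\bar b}$, and the shift $e^{2\pi i p_1q_2}$ by $e^{-2\pi i\tilde p_1\tilde q_2}$ (the sign being harmlessly absorbed when one passes from $\bar T$ to $\bar T^{-1}$ in $\widetilde T$). Multiplying the holomorphic pentagon by the antiholomorphic one and using the commutativity of the two subalgebras reassembles the pentagon for $\widetilde T$.

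The main obstacle is not the algebra but the domain bookkeeping. Each $\Psi_b$ and $\bar\Psi_{\bar b}$ is evaluated on an unbounded self-adjoint operator whose spectrum was computed in Proposition~\ref{spec}, and the functional equation of $\Psi_b$ must be shifted through unbounded exponentials. All manipulations must therefore be carried out on a common dense invariant subspace, namely the intersection of the Schwartz-type domains of $q_i,p_i,\tilde q_i,\tilde p_i$; once established there, the density noted in the remark following Proposition~\ref{spec} extends the identity to all of $\cH$ as an equality of bounded unitary operators.
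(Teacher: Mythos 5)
Your proposal is correct and follows essentially the same route as the paper: both rest on splitting $\widetilde T$ into the commuting $\Psi_{b^{-1}}$-numerator and $\bar\Psi_{\bar b^{-1}}$-denominator, each of which satisfies the pentagon relation on its own, and then on a density/continuity argument to extend the identity to all of $\cH$. The only (minor) difference is that the paper justifies each factor's pentagon by analytic continuation from the known self-adjoint case, whereas you rerun the algebraic Faddeev--Kashaev derivation directly on the normal operators via Proposition~\ref{prop:2}, Lemma~\ref{lem:1} and the five-term identity for $(x;q)_\infty$.
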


\proof
Since we have $\tilde T$ written as the quotient of two expressions, which we know satisfies pentagon individually for insertion of self-adjoint restrictions of the $p$'s and $q$'s, we see by analyticity that the pentagon relation must also hold for $\widetilde T$. 
\eproof

Let ${\mathcal G}_{Pt}(\Sigma)$ be the Ptolemy groupoid of a surface $\Sigma$ of genus $g$ with $s>0$ punctures. Recall that the objects of ${\mathcal G}_{Pt}(\Sigma)$ are ideal triangulations of $\Sigma$ and morphism are flip transformations of ideal triangulations as first studied by Penner, who has also given a presentation of this groupoid \cite{Pennerbook}. A simple Euler characteristic computation shows that each ideal triangulation contains $4g+2s-4$ triangles.

\begin{theorem}\label{RepPt}
We get a unitary representation of ${\mathcal G}_{Pt}(\Sigma)$ for each $\Sigma$ as above, by assigning the Hilbert space $\cH^{\otimes (4g+2s-4)}$ to each object and by applying $\widetilde T : \cH \otimes \cH \ra \cH \otimes \cH$ in the two factors corresponding to a flip.
\end{theorem}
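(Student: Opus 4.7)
The plan is to view the assignment as a functor from ${\mathcal G}_{Pt}(\Sigma)$ to Hilbert spaces and to verify that it respects a presentation of the Ptolemy groupoid. First I would confirm the dimension of the target via the Euler characteristic: each triangle has three edges and each edge is shared by two triangles, so $E=3F/2$; combined with $V=s$ and $V-E+F=\chi(\Sigma)=2-2g$ this forces $F=4g+2s-4$ for every ideal triangulation of $\Sigma$. For each triangulation $\tau$ one then fixes an ordering of its triangles (implicitly passing through the decorated framework of Section~\ref{MS}) and attaches the $i$-th tensor factor of $\cH^{\otimes(4g+2s-4)}$ to the $i$-th triangle. To a flip $\omega_{ij}$ affecting triangles $i$ and $j$ we assign the operator $\widetilde T_{ij}$ that acts as $\widetilde T$ on the $i$-th and $j$-th factors and as the identity elsewhere; unitarity of each $\widetilde T_{ij}$ follows from the quotient form of $\widetilde T$ together with the adjoint relations $q^*=\tilde q$, $p^*=\tilde p$ of the previous section.

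The body of the proof consists in checking the defining relations of Penner's Ptolemy groupoid. Commutation of flips on disjoint quadrilaterals is automatic because $\widetilde T_{ij}$ and $\widetilde T_{kl}$ act on disjoint tensor slots whenever $\{i,j\}\cap\{k,l\}=\emptyset$. The pentagon relation~\eqref{eq:pent} is precisely the operator identity $\widetilde T_{12}\widetilde T_{13}\widetilde T_{23}=\widetilde T_{23}\widetilde T_{12}$ established immediately above. The remaining constraint needed for the functor to be well-defined is the inversion-type identity~\eqref{eq:20}, which states that the inverse of a flip agrees with a suitably re-marked flip of the new diagonal: this is verified by combining the commutation rules of Proposition~\ref{prop:2} with the characterization~\eqref{nf} of $\cH$ and the functional equation $\Psi_b(z+ib/2)=\Psi_b(z-ib/2)(1+e^{2\pi bz})$, with unbounded-operator manipulations justified on the dense analytic core supplied by Proposition~\ref{spec}.

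The principal obstacle is the inversion relation: unlike the pentagon, which is a direct algebraic consequence of Lemma~\ref{lem:1} and the functional equation of $\Psi_b$, the inversion identity couples $\widetilde T$ with the operator implementing the corner change $\rho_i$ and requires careful bookkeeping of unbounded-operator domains and of the phase cancellations between the holomorphic and antiholomorphic factors $\Psi_{b^{-1}}/\bar\Psi_{\bar b^{-1}}$ appearing in $\widetilde T$. Once established on the dense core, the identity extends to all of $\cH^{\otimes(4g+2s-4)}$ by continuity, yielding the desired unitary representation of ${\mathcal G}_{Pt}(\Sigma)$.
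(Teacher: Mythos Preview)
The paper offers no separate proof of this theorem: after establishing that $\widetilde T$ is unitary and satisfies the pentagon identity, it cites Penner's presentation of ${\mathcal G}_{Pt}(\Sigma)$ and simply states the result. Your outline therefore supplies considerably more detail than the paper does, and the Euler-characteristic count, the disjoint-commutation check, and the invocation of the pentagon theorem are all in order.

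The weak point is your treatment of the remaining relation. You invoke \eqref{eq:20}, but that identity lives in the groupoid $\mathcal{G}(S)$ of \emph{decorated} ideal triangulations from Section~\ref{MS}: it explicitly involves the corner-change generators $\rho_i$ and the transposition $(ij)$, for which Theorem~\ref{RepPt} assigns no operators whatsoever. Since the theorem sends only the flips $\omega_{ij}$ to $\widetilde T_{ij}$, relation \eqref{eq:20} cannot even be formulated inside the representation you are building. The groupoid ${\mathcal G}_{Pt}(\Sigma)$ of the theorem has \emph{undecorated} ideal triangulations as objects, and in Penner's presentation the relation beyond the pentagon and disjoint commutativity is involutivity: two successive flips on the same quadrilateral return the original triangulation. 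That is the identity you would have to check, and it neither coincides with \eqref{eq:20} nor follows from the ingredients you list (Proposition~\ref{prop:2}, \eqref{nf}, and the functional equation for $\Psi_b$). Indeed $\widetilde T$ does not satisfy a naive $\widetilde T^2=\id$, so one must either pass to the decorated groupoid---supplying operators for $\rho_i$ and for permutations and then verifying \eqref{eq:cubic} and \eqref{eq:20} in full---or read the theorem projectively. The paper does neither at this point (the tetrahedral symmetries enter only later, in Section~\ref{CPT}, via the auxiliary operators $\tilde A,\tilde B$), so your instinct that something beyond the pentagon must be checked is right, but the specific relation you target and the tools you cite for it are misdirected.
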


\section{Charged Tetrahedral operators and the symmetries}
\label{CPT}

In order to turn the unitary representation of the Ptolemy groupoid into a TQFT, we need to specify tetrahedral operators, which posses the symmetries of the tetrahedra, just as we did previously in \cite{AK1,AK2}. 
\begin{definition}
For arbitrary complex numbers $a,c$ we define
\begin{align}
  \nonumber
  \widetilde T(a,c) &= e^{-\pi ic_b^2(4(a-c)+1)/b} e^{+\pi
    ic_{\bar b}^2(4(\bar a-\bar c)+1)/b}\\
  \nonumber
 & e^{4\pi ic_b(cu_2-au_1)} e^{-4\pi ic_{\bar b}(\bar c\bar u_2-\bar
    au_1)}\widetilde T
  e^{-4\pi ic_b(av_2+cu_2)} e^{4\pi ic_{\bar b}(\bar a\bar v_2+\bar
    c\bar u_2)}
\end{align}
\end{definition}
In order to establish the needed symmetries of $\widetilde T(a,c)$ we introduce the following bilinear pairing
\begin{align}
  \nonumber
  (,)_F\colon \cH\times \cH \to\setC
\end{align}
given by
\begin{align}
  \nonumber
  (s_1,s_2)_F &= (s_1, Fs_2) = \int_{T^2}s_1\cdot Fs_2
\end{align}
where $F : C^\infty(T^2, L^k) \ra C^\infty(T^2, (L^*)^k)$ is given by 
$$F(s)(u',v') = s(v',u').$$
The welldefinedness of $F$ is shown by the simple computation
\begin{align}
  \nonumber
  F(s)(u'+1,v') &= s(v',u'+1) = e^{-\pi ikv'} s(v',u') = e^{-\pi ikv'}F(s)(u',v')\\
  \nonumber
  F(s)(u',v'+1) &= e^{\pi iku'}F(s)(u',v')
\end{align}
For any operator
\begin{align}
  \nonumber
  \tilde A\colon \mathcal{H}\to\mathcal{H}
\end{align}
we can define a bilinear pairing $\tilde A$ on $\cH$ by the formula
\begin{align}
  \nonumber
  \tilde A(s_1,s_2) = (s_1, \tilde As_2)_F = (s_1, F\tilde As_2).
\end{align}
We can introduce the transposed operator
$$ \tilde A^t : \cH \ra \cH$$
determined by
$$ (\tilde A^t s_1,s_2)_F = (s_1,\tilde As_2)_F.$$
We observe that
$$F\tilde A = \tilde A^tF$$
if an only if the bilinear pairing $\tilde A$ is symmetric,
since
\begin{align}
  \nonumber
  (s_1,s_2)_F &= \int_{T^2} s_1\cdot Fs_2 = \int_{T^2}^{} F(s_1\cdot Fs_2)\\
  \nonumber
  &= \int_{T^2}^{} Fs_1\cdot s_2 = (s_2,s_1)_F.
\end{align}

Let us use the bilinear pairing to define
\begin{align}
  \nonumber
  \widetilde T(a,c)\in 
  (\mathcal{H}\otimes\mathcal{H}\otimes\mathcal{H}\otimes\mathcal{H})^*
\end{align}
by the formula
\begin{align}
  \nonumber
  \widetilde T(a, c)(s_0\otimes s_2\otimes s_1\otimes s_3) &= (T(a,c)s_0\otimes
  s_2, s_1\otimes s_3)_F
\end{align}

The required tetrahedral symmetries (see the Fundamental Lemma 1 in \cite{MR3227503}) of the charged operator $\widetilde T(a,c)$ are the following operator relations
\begin{align}
  \nonumber
  \widetilde T(a,c)\circ \tilde A_3\circ \tilde A_2 &= \widetilde T
  (a,b)^{-1}P_{23}P_{01}P_{23}\\
  \nonumber
  \widetilde T(a,c)\circ \tilde B_3\circ \tilde A_0^* &= \widetilde T(b,c)^{-1}
  P_{02}P_{13}P_{21}P_{01}P_{23}\\
  \nonumber
  \widetilde T(a,c)\circ \tilde B_1\circ \tilde B_0^* &= \widetilde T(a,b)^{-1}
  P_{01}P_{01}P_{23}
\end{align}
where 
\begin{align}
  \nonumber
  \tilde A\colon \mathcal{H}\to\mathcal{H},   \tilde B\colon \mathcal{H}\to\mathcal{H}
\end{align}
are certain operators corresponding to symmetric bilinear parings 
$$\tilde A,\tilde B : \cH \otimes \cH \ra \setC.$$
\begin{theorem}
There exist unitary operators
\begin{align}
  \nonumber
  \tilde A\colon \mathcal{H}\to\mathcal{H},   \tilde B\colon \mathcal{H}\to\mathcal{H}
\end{align}
such that 
\begin{align}
  \nonumber
  \widetilde T(a,c)\circ \tilde A_3\circ \tilde A_2^* &= \widetilde T(a,b)^{-1}
  P_{01}\\
  \nonumber
  \widetilde T(a,c)\circ \tilde B_3\circ \tilde A_0^* &= \widetilde T(b,c)^{-1}
  P_{02}\\
  \nonumber
  \widetilde T(a,c)\circ \tilde B_1\circ \tilde B_0^* &= \widetilde T(a,b)^{-1}
  P_{23}.
\end{align}
\end{theorem}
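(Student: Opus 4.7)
The plan is to construct $\tilde A$ and $\tilde B$ explicitly as unitary Gaussian-type operators in the Heisenberg operators $q,p,\tilde q,\tilde p$, and to check the three identities by reducing each side to a manifestly equal expression via the commutation relations of Proposition~\ref{prop:2} and the definition of $\widetilde T(a,c)$. This follows the template already used by the authors in \cite{AK1,AK2} and in \cite{MR3227503}, adapted to the present complex (doubled) setting.

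First I would posit the ansatz
\begin{align*}
\tilde A &= \zeta_A\, e^{\pi i(q^{2}-\tilde q^{2})}\, e^{2\pi i(qp+\tilde q\tilde p)}, \qquad
\tilde B = \zeta_B\, e^{-2\pi i(qp+\tilde q\tilde p)},
\end{align*}
up to scalar normalizing phases $\zeta_A,\zeta_B$ that will be fixed at the end by comparing central constants. Using Lemma~\ref{lem:1} together with the commutation relations $[q,p]=[\tilde q,\tilde p]=-\tfrac{1}{2\pi i}$, I would compute the conjugation action of $\tilde A$ and $\tilde B$ on $q,p,\tilde q,\tilde p$; the point is that $\tilde A$ implements the cyclic rotation $q\mapsto p\mapsto -q-p\mapsto q$ appropriate to the 3-fold symmetry of a tetrahedron around one vertex, while $\tilde B$ implements the involution $q\leftrightarrow p$ corresponding to an edge reflection, each in both the holomorphic and antiholomorphic sectors. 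Unitarity is automatic from the self-adjointness of the quadratic exponents under the pairing $q^{*}=\tilde q,\ p^{*}=\tilde p$ established after Proposition~\ref{spec}.

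Next I would verify the three relations one at a time. For each identity the method is the same: expand $\widetilde T(a,c)$ using its definition, apply Lemma~\ref{lem:1} to push the Gaussians of $\tilde A$ or $\tilde B$ through the quantum dilogarithm factor $\Psi_{b^{-1}}(q_1-p_1+p_2)/\bar\Psi_{\bar b^{-1}}(\tilde q_1-\tilde p_1+\tilde p_2)$, and observe that the argument of $\Psi_{b^{-1}}$ is converted into minus itself (or permuted), at which point the inversion relation for Faddeev's quantum dilogarithm $\Psi_{b^{-1}}(z)\Psi_{b^{-1}}(-z)=e^{-\pi i(\cdots)}$ converts the numerator and denominator into those of $\widetilde T(a',c')^{-1}$ with the correct transported charges. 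The permutation operators $P_{01},P_{02},P_{23}$ absorb the residual relabelling of tensor factors.

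The main obstacle, and the part requiring the most careful bookkeeping, is tracking how the three charge parameters $(a,b,c)$ transform under each symmetry and how the central scalar prefactors $e^{-\pi i c_b^{2}(4(a-c)+1)/b}$ and their conjugates reassemble on the right-hand side. Concretely I expect that the relation $a+b+c$ equals a fixed constant (coming from the angle sum on the tetrahedron) is the only constraint needed, and the triangle of charge transformations $(a,c)\to(a,b)\to(b,c)\to(a,c)$ matches exactly the cyclic pattern realized by $\tilde A,\tilde B$. Once the charge transport is verified, the $\Psi$-factor identity and the Gaussian factor identity decouple, so fixing $\zeta_A,\zeta_B$ and normalizations of the form $e^{\pm\pi i c_b^{2}/b}$ is a matter of matching scalars, completing the proof.
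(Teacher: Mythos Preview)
Your approach differs substantially from the paper's. The paper does not construct $\tilde A,\tilde B$ as metaplectic Gaussians in the Heisenberg generators at all; instead it rewrites the three operator identities as identities of bilinear pairings (equation~\eqref{eq:22}) and then simply sets
\[
\tilde A = ST,\qquad \tilde B = TS,
\]
where $S$ is the $90^\circ$ rotation $S(u',v')=(-v',u')$ already introduced for the unitary structure~\eqref{US}, and $T$ is the shear $T(s)(u',v')=s(u'+v',v')\,e^{\pi i k v'}$. The verification is then declared an ``easy'' direct check of the pairing identities, with no use of the dilogarithm inversion relation, no charge bookkeeping, and no Lemma~\ref{lem:1} manipulations. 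So the paper's argument is geometric and essentially one line, whereas yours is the algebraic route from \cite{MR3227503}.

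Your strategy is certainly in the right spirit and can be made to work, but the specific ansatz you wrote down has a problem. With $[q,p]=-\tfrac{1}{2\pi i}$ one computes $[qp,q]=\tfrac{1}{2\pi i}q$ and $[qp,p]=-\tfrac{1}{2\pi i}p$, so conjugation by $e^{-2\pi i qp}$ sends $(q,p)\mapsto(e^{-1}q,\,e\,p)$: a dilation, not the involution $q\leftrightarrow p$ you claim for $\tilde B$. The correct metaplectic operators implementing the tetrahedral symmetries are built from exponentials of $q^{2}$, $p^{2}$, and $(q+p)^{2}$ (as in the real Teichm\"uller TQFT), not from $qp$ alone. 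If you want to carry your program through, you would need to correct the ansatz to genuine $\mathrm{SL}(2)$ metaplectic elements (and their antiholomorphic partners) before the conjugation argument with $\Psi_{b^{-1}}$ can even begin.
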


\proof
By inserting the definition of $\widetilde T(a,c)$ into these equations, we get the corresponding equations
\begin{align}
  \label{eq:22}
  (T(a,c)(s_0\otimes \tilde A^*(s_2)), s_1\otimes \tilde A(s_3))_F &=
  (T(a,b)^{-1} (s_1\otimes s_2), s_0\otimes s_3)_F\\
  \nonumber
  \nonumber
  (T(a,c)(\tilde A^*(s_0)\otimes s_2), s_1\otimes \tilde B(s_3))_F &= (T(b,c)^{-1}(s_2\otimes
  s_0), s_1\otimes s_3)_F\\
  \nonumber
  \nonumber
  (T(a,c)(\tilde B^*(s_0)\otimes s_2), \tilde B(s_1)\otimes s_3)_F &= (T(a,b)^{-1}(s_0\otimes
  s_3), s_1\otimes s_2)_F
\end{align}
It is an easy verification to see that if we let
$$ \tilde A = ST, \tilde B = TS$$
where 
$T(s)(u',v') = s(u'+v',v') e^{\pi i k v'}$
then they verify these three equations.

\eproof

Now that we have build the charges tetrahedral operator $\widetilde T(a,c)$ with the required symmetries, we may proceed exactly like in our previous papers \cite{MR3227503} to define the associated TQFT, which in this case will result in quantum Chern-Simons theory for $PSL(2,\setC)$ at level $k$. Below we spell this out in detail in the alternative formulation of LCA's, first in a much more general setting and then we return to the example at hand in transformed by the $k$'th order Weil-Gel'fand-Zak transform, so that it becomes the theory associated to the LCA ${\mathbb R} \times \setZ_k$. 

\section{General framework}

\label{GF}

We let $\TORUS$ denote the complex unit circle. 
Let $A$ be a Pontryagin self-dual locally compact abelian (LCA) group. We denote by $\bkt{x;y}$ the symmetric bi-character which we will call \emph{Fourier kernel}, and we choose a Haar measure $\operatorname{d}\!x$ on $A$ so that the Fourier transformation $\mathsf{F}\colon L^2(A)\to L^2(A)$ defined by the formula
\begin{equation}
\mathsf{F}(f)(x)=\int_Af(y)\bkt{x;y} \operatorname{d}\!x
\end{equation}
 is an isometry, see e.g. \cite{standard ref on Fourrier analysis on LCA's}. 

\begin{definition}
A  function $\bkt{x}$ satisfying the relations
\begin{equation}
\bkt{x}=\bkt{-x},\quad \bkt{x;y}=\frac{\bkt{x+y}}{\bkt{x}\bkt{y}}.
\end{equation}
is called a \emph{Gaussian exponential}.
\end{definition}

\begin{example}
 If $A=\REALS$ with standard Lebesgue measure, then we have $\bkt{x;y}=e^{2\pi\imun xy}$ and $\bkt{x}=e^{\pi\imun x^2}$.
\end{example}
Assume from now on that a Gaussian exponential has been fixed.
It is clear that $\bkt{0}=1$, and  we also have $\bkt{x;x}=\bkt{x}^2$ as the consequence of the following calculation:
\begin{equation}
\bkt{x;x}=\frac{1}{\bkt{x;-x}}=\frac{\bkt{x}\bkt{-x}}{\bkt{x-x}}=\frac{\bkt{x}\bkt{-x}}{\bkt{0}}=\bkt{x}^2.
\end{equation}
Evenmore, one can easily prove by recurrence that $\bkt{nx}=\bkt{x}^{n^2}$ for any integer $n$.
\begin{definition}
A group homorphism $\mathsf{f}$  from $A$ to the group of unitary operators acting on $L^2(A)$ will be called a \emph{u-operator}, and we will write 
\begin{equation}
\mathsf{f}\colon A\to U(L^2(A)),\quad x\mapsto \bkt{\mathsf{f};x}.
\end{equation}
\end{definition}
We can consider integer multiples of a u-operator, through the formula
\begin{equation}
\bkt{n\mathsf{f};x}=\bkt{\mathsf{f};x}^n.
\end{equation}
This can be generalized to a slightly non-commuting setting as follows.
\begin{definition}
We say that two u-operators $\mathsf{p}$ and $\mathsf{q}$ form a \emph{Heisenberg pair}  if they satisfy the relation 
\begin{equation}\label{heis}
\bkt{\mathsf{p};x}\bkt{\mathsf{q};y}=\bkt{x,y}^{\ell(\mathsf{p},\mathsf{q})}\bkt{\mathsf{q};x}\bkt{\mathsf{p};y}, \quad\forall x,y\in A\end{equation}
for some integer $\ell(\mathsf{p},\mathsf{q})$, which we will call the \emph{level} of the pair $(\mathsf{p},\mathsf{q})$. 
\end{definition}

Any u-operator forms a Heisenberg pair of level $0$ with itself. A typical example of a Heisenberg pair of level $1$ is given by the u-operators defined by the relations
\begin{equation}
(\bkt{\mathsf{p};x}f)(y)=f(x+y),\quad (\bkt{\mathsf{q};x}f)(y)=\bkt{y;x}f(y),
\end{equation}
which satisfy relation~\eqref{heis} with $\ell(\mathsf{p},\mathsf{q})=1$ as the consequence of the following simple calculation:
\begin{equation}
(\bkt{\mathsf{p};x}\bkt{\mathsf{q};y}f)(z)=(\bkt{\mathsf{q};y}f)(x+z)=\bkt{y;x+z}f(x+z)
\end{equation}
and
\begin{equation}
(\bkt{\mathsf{q};y}\bkt{\mathsf{p};x}f)(z)=\bkt{z;y}(\bkt{\mathsf{p};x}f)(z)=\bkt{z;y}f(x+z).
\end{equation} 
This particular example will be called the \emph{canonical Heisenberg pair}.
\begin{definition}
For any Heisenberg pair $\mathsf{p}$ and $\mathsf{q}$ of level $n$, we define their \emph{sum} $\mathsf{p}+\mathsf{q}$
by the formula
\begin{equation}
\bkt{\mathsf{p}+\mathsf{q};x}=\bkt{x}^n\bkt{\mathsf{q};x}\bkt{\mathsf{p};x}.
\end{equation}
\end{definition}
The notation and term are justified by the property
\begin{equation}
\bkt{\mathsf{p}+\mathsf{q};x}\bkt{\mathsf{p}+\mathsf{q};y}=\bkt{\mathsf{p}+\mathsf{q};x+y}
\end{equation}
as the consequence of the following calculation:
\begin{multline}
 \bkt{\mathsf{p}+\mathsf{q};x}\bkt{\mathsf{p}+\mathsf{q};y}=\bkt{x}^n\bkt{\mathsf{q};x}\bkt{\mathsf{p};x}\bkt{y}^n\bkt{\mathsf{q};y}\bkt{\mathsf{p};y}\\=\bkt{x}^n\bkt{y}^n\bkt{x;y}^n\bkt{\mathsf{q};x}\bkt{\mathsf{q};y}\bkt{\mathsf{p};x}\bkt{\mathsf{p};y}
 =\bkt{x+y}^n\bkt{\mathsf{q};x+y}\bkt{\mathsf{p};x+y}.
\end{multline}
For the moment this is only a binary operation in the set of u-operators, but it is commutative  as it is seen from the obvious equality
\begin{equation}
\bkt{x}^n\bkt{\mathsf{q};x}\bkt{\mathsf{p};x}=\bkt{x}^{-n}\bkt{\mathsf{p};x}\bkt{\mathsf{q};x},
\end{equation}
while the associativity is more lengthy to check but straightforward. This construction implies that any set of u-operators where any pair of elements forms a Heisenberg pair generates an abelian group, and the level becomes an antisymmetric bilinear form on that group. Such a group will be called a \emph{Heisenberg group of u-operators}, if the level bilinear form on it is non-trivial.

Suppose that $f$ is a complex valued function  on $A$ and  $\mathsf{g}$ a u-operator, such that 
the operator  $f(\mathsf{g})$ can be defined on an open dense subspace of $L^2(A)$ by the formula:
\begin{equation}\label{opfun}
f(\mathsf{g})=\int_A\mathsf{F}^{-1}(f)(x)\bkt{\mathsf{g};x}\operatorname{d}\!x.
\end{equation}
Under certain conditions, a property of $f$ to be bounded/unitary will imply that the corresponding operator is bounded/unitary and further, the operators associated to two functions always commute and their product is the operator associated to the product of the functions. We will see this in the examples we do in detail below. 

The cornerstone of our work is the following notion.
\begin{definition}
 A \emph{quantum dilogarithm} over $A$ is a function $\phi\colon A\to\TORUS$  which satisfies the inversion relation
 \begin{equation}
\phi(x)\phi(-x)=\bkt{x}\phi(0)^2,\quad \forall x\in A,
\end{equation}
and the following Faddeev's five term operator relation is satisfied
 \begin{equation}\label{pent}
\phi(\mathsf{p})\phi(\mathsf{q})=\phi(\mathsf{q})\phi(\mathsf{p}+\mathsf{q})\phi(\mathsf{p})
\end{equation}
for some Heisenberg pair of u-operators $\mathsf{p}$ and $\mathsf{q}$ with $\ell( \mathsf{p}, \mathsf{q})=1$,
\end{definition}
 The inversion relation is consistent with Faddeev's relation in the following sense. Denote $\phi(x)\phi(-x)\equiv \sigma(x)$. Remarking that $\ell(\mathsf{q},-\mathsf{p})=1$, we multiply \eqref{pent} from the right by $\phi(-\mathsf{p})$ and use Faddeev's relation on the left hand side. The result is as follows:
 \begin{equation}
\sigma(\mathsf{p})\phi(\mathsf{q}-\mathsf{p})\phi(\mathsf{q})=\phi(\mathsf{q})\phi(\mathsf{q}+\mathsf{p})\sigma(\mathsf{p})
\end{equation}
which becomes an identity for any $\phi$, for which these operators are defined, provided $\sigma(x)$ is a scalar multiple of $\bkt{x}$.

At the basis of our approach of constructing quantum dilogarithms is the following five-term identity in the algebra of formal power series of two $q$-commuting formal variables:
\begin{equation}
(\mathsf{v};q)_\infty(\mathsf{u};q)_\infty=(\mathsf{u};q)_\infty(-\mathsf{v}\mathsf{u};q)_\infty(\mathsf{v};q)_\infty,\quad \mathsf{u}\mathsf{v}=q\mathsf{v}\mathsf{u},
\end{equation}
where
\begin{equation}
(x;q)_\infty\equiv (1-x)(1-xq)(1-xq^2)\dots
\end{equation}
This identity first has been discovered in \cite{MR1264393}. The challenge is to convert it into the operator identity \eqref{pent} for unitary operators in the Hilbert space $L^2(A)$.

Let $q$ be a complex number inside the unit disk, and $\chi\colon A\to\COMPLEXS^\times$ a continuous group homomorphism such that the function $(-q^{1/2}\chi(x);q)_\infty$ does not vanish on $A$, i.e. $\chi(x)q^{\frac12+n}\ne-1$ for all $x\in A$ and $n\in\INTEGERS_{\ge0}$, and for a Heisenberg pair of u-operators $\mathsf{p}$ and $\mathsf{q}$ of level $1$, the operators $\chi(\mathsf{q}),\chi(\mathsf{p}),\bar\chi(\mathsf{q}),\bar\chi(\mathsf{p})$ are defined and satisfy the following relations:
\begin{equation}
\chi(\mathsf{q})\chi(\mathsf{p})=q\chi(\mathsf{p})\chi(\mathsf{q}),\quad 
\chi(\mathsf{q})\bar\chi(\mathsf{p})=\bar\chi(\mathsf{p})\chi(\mathsf{q}),
\end{equation}
where $\bar\chi$ is the complex conjugate function. Then, under some further conditions on the objects entering the construction, we expect that the 
 function 
\begin{equation}\label{mfun}
\Phi_{q,\chi}\colon A\to \TORUS, \quad \Phi_{q,\chi}(x)=e^{2\imun\arg\left(-q^{1/2}\chi(x);q\right)_\infty},
\end{equation}
is a quantum dilogarithm over $A$. 

The first example of a quantum dilogarithm over the LCA group $\REALS$ was found by L.~Faddeev in \cite{Faddeev1994}. It has the form
\begin{equation}\label{ffun}
\phi(x)=\fad_\theta(x)\equiv\Phi_{q,\chi}(x)
\end{equation}
 with 
 \begin{equation}
q=e^{2\pi\imun \theta^2},\quad \chi(x)=e^{2\pi \theta x},\quad |\theta|=1,\quad \Re \theta>0,\ \Im\theta>0.
\end{equation}
Faddeev's quantum dilogarithm is closely related to Shintani's double sine function \cite{MR0460283,MR1105522,Barnes1904}, but the five term relation~\eqref{pent} seems not to be known before Faddeev's paper \cite{Faddeev1994}. For further properties of Faddeev's quantum dilogarithm see, for example, \cite{MR1828812,MR2171695,MR1770545,MR2952777}.

In this paper, we generalize this example to the case of the LCA group $\REALS\oplus(\INTEGERS/\myN\INTEGERS)$ for any positive integer $\myN$. The solution is again of the form 
\begin{equation}\label{dfun}
\phi(x,n)=\gfad_\theta(x,n)\equiv\Phi_{q,\chi}(x,n)
\end{equation}
 with
 \begin{equation}
q=e^{2\pi\imun (1+\theta^2)/\myN},\quad \chi(x,n)=e^{2\pi \theta x/\sqrt{\myN}}e^{2\pi\imun n/\myN},\quad |\theta|=1,\quad \Re \theta>0, \ \Im\theta>0.
\end{equation}

\section{Functions of the Faddeev type} 
\label{FoFT}

This section, essentially reproduces the constructions of Section~4 of the work \cite{Kashaev2014} by adapting the formulas to the context of arbitrary self-dual LCA group $A$ where all elements are divisible by 2. This adaptation boils down to replacements of all $\REALS$'s by $A$'s, the operators $e^{2\pi\imun\mathsf{f}x}$ by $\bkt{\mathsf{f};x}'s$, and the Gaussian exponentials
$e^{\pi\imun x^2}$ by $\bkt{x}$. Nonetheless, we think that it is instructive to review here in detail all these constructions.

 For any non-negative integer $n$, we let $[n]$ denote the set $\Z_{\ge0}\cap\Z_{\le n}$ and $\Delta[n]$ denote  the standard combinatorial $n$-simplex seen, for example, as the power set $2^{[n]}$. We also denote by $\Delta[n]_i$ the set of $i$-dimensional simplexes of $\Delta[n]$. A function
\begin{equation}
f\colon[4]\times A\to\COMPLEXS,\quad (j,x)\mapsto f_j(x),
\end{equation}
is called of \emph{the Faddeev type} over $A$ if it is bounded and satisfies the operator relation
\begin{equation}\label{eq35}
f_1(\mathsf{p})f_3(\mathsf{q})=f_4(\mathsf{q})f_2(\mathsf{p}+\mathsf{q})f_0(\mathsf{p})
\end{equation}
where $\mathsf{p}$ and $\mathsf{q}$ form a Heisenberg pair of u-operators with $\ell(\mathsf{p},\mathsf{q})=1$. The difference with a quantum dilogarithm over $A$ is that we admit five different functions on $A$, we do not assume unitarity, and we do not impose the inversion relation. That means, that the constant unit function 
 \begin{equation}
f_j(x)=1,\quad \forall j\in[4],
\end{equation}
constitutes a trivial example of a function of the Faddeev type.
\begin{lemma}
 A square integrable function $f$ is of the Faddeev type over A if and only if
 \begin{equation}\label{eq50}
\tilde f_1(x)\tilde f_3(y)=\bkt{x;-y}\int_A \tilde f_4(y-z)\tilde f_2(z)\tilde f_0(x-z)\bkt{z}dz,\quad\forall (x,y)\in A^2,
\end{equation}
where
\begin{equation}
\tilde f(x)\equiv(\mathsf{F}^{-1}f)(x)= \int_A\bkt{x;-y}f(y)dy.
\end{equation}
\end{lemma}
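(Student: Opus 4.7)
The idea is to rewrite each side of the operator relation~\eqref{eq35} in the canonical \emph{Weyl form}
\begin{equation*}
\int_{A^2}K(x,y)\,\bkt{\mathsf{q};y}\bkt{\mathsf{p};x}\,dx\,dy,
\end{equation*}
extract the respective kernels $K_L$, $K_R$, and invoke uniqueness of this representation to conclude that~\eqref{eq35} is equivalent to $K_L=K_R$, which rearranges into~\eqref{eq50}. The square integrability hypothesis ensures that each $\tilde f_j=\mathsf{F}^{-1}f_j$ lies in $L^2(A)$ and that the formal manipulations below take place in a space where the uniqueness step applies.

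For the \emph{left hand side}, substituting~\eqref{opfun} into $f_1(\mathsf{p})f_3(\mathsf{q})$ produces
\begin{equation*}
\int_{A^2}\tilde f_1(x)\tilde f_3(y)\bkt{\mathsf{p};x}\bkt{\mathsf{q};y}\,dx\,dy,
\end{equation*}
and one reorders the two factors via the Heisenberg relation (the $\ell(\mathsf{p},\mathsf{q})=1$ case of~\eqref{heis}) in the form $\bkt{\mathsf{p};x}\bkt{\mathsf{q};y}=\bkt{x;y}\bkt{\mathsf{q};y}\bkt{\mathsf{p};x}$, which one reads off directly from the action of the canonical pair. This yields $K_L(x,y)=\bkt{x;y}\,\tilde f_1(x)\tilde f_3(y)$.

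For the \emph{right hand side}, the key move is to unfold $f_2(\mathsf{p}+\mathsf{q})$ through the defining identity $\bkt{\mathsf{p}+\mathsf{q};z}=\bkt{z}\bkt{\mathsf{q};z}\bkt{\mathsf{p};z}$ of the sum of u-operators. Inserting this together with the analogous expansions for $f_0(\mathsf{p})$ and $f_4(\mathsf{q})$, the integrand in $f_4(\mathsf{q})f_2(\mathsf{p}+\mathsf{q})f_0(\mathsf{p})$ acquires the form
\begin{equation*}
\tilde f_4(y')\,\tilde f_2(z)\,\tilde f_0(x')\,\bkt{z}\,\bkt{\mathsf{q};y'}\bkt{\mathsf{q};z}\bkt{\mathsf{p};z}\bkt{\mathsf{p};x'}.
\end{equation*}
Because $\bkt{\mathsf{q};\cdot}$ and $\bkt{\mathsf{p};\cdot}$ are group homomorphisms, the middle factors combine to $\bkt{\mathsf{q};y'+z}\bkt{\mathsf{p};z+x'}$; the translation-invariant change of variables $y=y'+z$, $x=z+x'$ then brings the expression into Weyl form with kernel
\begin{equation*}
K_R(x,y)=\int_A\tilde f_4(y-z)\,\tilde f_2(z)\,\tilde f_0(x-z)\,\bkt{z}\,dz.
\end{equation*}
Equating $K_L=K_R$ and dividing by $\bkt{x;y}$ (noting $\bkt{x;y}^{-1}=\bkt{x;-y}$, as $\bkt{x;\cdot}$ is a character) gives~\eqref{eq50}; reading the chain of equalities backwards gives the converse.

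The main obstacle is justifying uniqueness of the Weyl representation. I would fix the canonical Heisenberg pair and verify that the assignment $K\mapsto\int K(x,y)\bkt{\mathsf{q};y}\bkt{\mathsf{p};x}\,dx\,dy$ embeds $L^2(A\times A)$ into the Hilbert--Schmidt operators on $L^2(A)$: a direct computation shows the resulting operator has Schwartz kernel equal to a partial Fourier transform of $K$, so injectivity follows from the Plancherel theorem for the Pontryagin self-dual group $A$. The square integrability of the $f_j$ is exactly what puts both $K_L$ and $K_R$ into the class where this uniqueness applies.
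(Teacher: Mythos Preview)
Your argument is correct and follows essentially the same route as the paper: both sides of~\eqref{eq35} are expanded via~\eqref{opfun}, the Heisenberg relation and the definition of $\mathsf{p}+\mathsf{q}$ are used to bring everything into the form $\int K(x,y)\bkt{\mathsf{q};y}\bkt{\mathsf{p};x}\,dx\,dy$, and then the kernels are equated. Your treatment is in fact slightly more careful than the paper's, which simply says ``comparing the coefficients of the operators $\bkt{\mathsf{q};y}\bkt{\mathsf{p};x}$'' without further justification; your remark that for the canonical pair this map is a partial Fourier transform onto Hilbert--Schmidt kernels, hence injective by Plancherel, supplies exactly the missing step.
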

\begin{proof}
By using the definition~\eqref{opfun}, equality~\eqref{eq35} takes the form of an operator valued  integral equality
\begin{multline}
\int_{A^2}\tilde f_1(x)\tilde f_3(y)\bkt{\mathsf{p};x}\bkt{\mathsf{q};y}\operatorname{d}\!x\operatorname{d}\!y\\=\int_{A^3}\tilde f_4(y)\tilde f_2(z)\tilde f_0(x)\bkt{\mathsf{q};y}\bkt{\mathsf{p}+\mathsf{q};z}\bkt{\mathsf{p};x}\operatorname{d}\!x\operatorname{d}\!y\operatorname{d}\!z
\end{multline}
which, by using the properties of Heisenberg pairs
takes the form
\begin{multline}\label{eq40}
\int_{A^2}\tilde f_1(x)\tilde f_3(y)\bkt{x;y}\bkt{\mathsf{q};y}\bkt{\mathsf{p};x}
\operatorname{d}\!x\operatorname{d}\!y\\=\int_{A^3}\tilde f_4(y)\tilde f_2(z)\tilde f_0(x)\bkt{z}\bkt{\mathsf{q};y+z}\bkt{\mathsf{p};x+z}\operatorname{d}\!x\operatorname{d}\!y\operatorname{d}\!z\\
=\int_{A^3}\tilde f_4(y-z)\tilde f_2(z)\tilde f_0(x-z)\bkt{z}\bkt{\mathsf{q};y}\bkt{\mathsf{p};x}\operatorname{d}\!x\operatorname{d}\!y\operatorname{d}\!z.
\end{multline}
Comparing the coefficients of the operators $\bkt{\mathsf{q};y}\bkt{\mathsf{p};x}$, we come to the conclusion that  equality~\eqref{eq40} is true if and only if equality~\eqref{eq50} is true.
\end{proof}
Taking the complex conjugate of \eqref{eq50}, we also have
\begin{equation}\label{eq60}
\bar{\tilde f}_1(x)\bar{\tilde f}_3(y)
=\bkt{x;y}\int_A \bar{\tilde f}_4(y-z)\bar{\tilde f}_2(z)\bar{\tilde f}_0(x-z)\overline{\bkt{z}}\operatorname{d}\!z,\quad\forall (x,y)\in A^2.
\end{equation}
\begin{definition}
 We say that a function
\begin{equation}\label{eq-10}
\phi\colon [4]\times A^2\to\COMPLEXS,\quad (j,x,y)\mapsto \phi_j(x,y)
\end{equation}
is of the \emph{beta pentagon type} over $A$ if the following five term integral relation is satisfied:
\begin{equation}\label{eq00}
  \phi_1(x,y)\phi_3(u,v) =\int_{A}
 \phi_4(u+y,v- z) 
 \phi_2(x+y+u+v-z,z)  \phi_0(x+v,y-z)\operatorname{d}\!z.
\end{equation}
The relation~\eqref{eq00} itself will be called the \emph{beta pentagon relation} over $A$.
\end{definition}
The motivation for the beta pentagon relation comes from the following combinatorial interpretation. 
Given a map as in \eqref{eq-10}, we define another map
\begin{equation}
W\colon \Delta[4]_3\times A^{\Delta[3]_1}\to \COMPLEXS
\end{equation}
by assigning
\begin{equation}
W(\partial_i\Delta[4],x)=\phi_i(x_{01}+x_{23}- x_{03}-x_{12},x_{03}+x_{12}-x_{02}-x_{13})
\end{equation}
where $x_{jk}$ is the $x$-image of the edge $\{j,k\}$. Then, the equality 
\begin{equation}
\prod_{i\in\{1,3\}}W(\partial_i\Delta[4],\varepsilon_i^{*}x)=
\int_{A}\operatorname{d}\!x_{13}\prod_{i\in\{0,2,4\}}W(\partial_i\Delta[4],\varepsilon_i^{*}x),\quad \forall x\in A^{\Delta[4]_1},
\end{equation}
with the standard injections
\(
\varepsilon_i\colon [3]\to [4],\ i\in[4],
\)
defined by
\begin{equation}
\varepsilon_i(j)=\left\{
\begin{array}{cl}
 j&\mathrm{if}\ j<i;\\
 j+1&\mathrm{otherwise},
\end{array}
\right.
\end{equation}
is a consequence of the beta pentagon relation for  $\phi$.
\begin{theorem}\label{theorem}
 Let $f$ and $g$ be two functions of the Faddeev type over a Pontryagin self-dual LCA group $A$ where any element is divisible by $2$. Then the function 
 \begin{equation}\label{eq70}
\varphi\colon [4]\times A^2\to\COMPLEXS,\quad
(j,x,y)\mapsto \varphi_j(x,y)=\int_A \bkt{y;t}f_j\left(t+\frac x2\right)\bar{g}_j\left(t-\frac x2\right)\operatorname{d}\!t
\end{equation}
is of the beta pentagon type over $A$, i.e. it satisfies relation~\eqref{eq00}.
\end{theorem}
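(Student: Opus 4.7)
The plan is to directly substitute the definition of $\varphi_j$ into both sides of the beta pentagon relation \eqref{eq00} and reduce each to a common expression by applying the integral Faddeev-type identity \eqref{eq50} to the $f$-factors and its complex-conjugate analogue \eqref{eq60} to the $\bar g$-factors. After substitution the LHS is the two-dimensional integral
\[
\int ds\int dt\,\bkt{y;s}\bkt{v;t}\,f_1(s+x/2)\bar g_1(s-x/2)f_3(t+u/2)\bar g_3(t-u/2),
\]
while the RHS becomes a four-dimensional integral in $(z,t_0,t_2,t_4)$. Grouping the $f$-factors and $\bar g$-factors separately, I would expand every $f_j$ and $\bar g_j$ via its Fourier inversion formula so that the $t_j$-integrations on the RHS (and the $s,t$ integrations on the LHS) produce delta-type constraints on the Fourier-space variables.

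On the RHS these constraints take the form $r_j+s_j=z-\mathrm{const}_j$ for $j=0,2,4$, which I would resolve by the linear change of variables $(z,r_0,r_2,r_4)\mapsto(s_2,\alpha,\beta,r_2)$ with $\alpha=r_2+r_4$, $\beta=r_2+r_0$, and $s_2=-z-r_2$. Under this change, the three $\tilde f$-factors acquire exactly the form $\tilde f_4(\alpha-r_2)\tilde f_2(r_2)\tilde f_0(\beta-r_2)$ appearing on the RHS of \eqref{eq50}, while the three $\bar{\tilde g}$-factors simultaneously acquire the form $\bar{\tilde g}_4(Y-s_2)\bar{\tilde g}_2(s_2)\bar{\tilde g}_0(X-s_2)$ with $X,Y$ depending only on $(\alpha,\beta)$ and the external parameters, matching the RHS of \eqref{eq60}. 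The plan is then to integrate out $r_2$ using \eqref{eq50} and $s_2$ using \eqref{eq60} in parallel.

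The essential bookkeeping step is that these two Faddeev identities require the Gaussian factors $\bkt{r_2}$ and $\overline{\bkt{s_2}}$ respectively. These should be produced precisely by the Gaussian bi-characters arising from the half-shifts $(x+y+u+v-z)/2$, $(u+y)/2$, $(x+v)/2$ in the $\varphi_j$-definitions. The key algebraic identity to verify is
\[
\bkt{r_2-s_2;(r_2+s_2)/2}=\bkt{r_2}\,\overline{\bkt{s_2}},
\]
which follows from the Gaussian property $\bkt{x;y}=\bkt{x+y}/(\bkt{x}\bkt{y})$ together with $\bkt{nx}=\bkt{x}^{n^2}$ and the symmetry of the bi-character, once one checks that the cross-terms $\bkt{r_2;s_2/2}$ and $\bkt{r_2/2;s_2}$ agree and therefore cancel out of the product.

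The hardest part will be this bookkeeping: tracking all the Gaussian kernels through the change of variables and verifying that the external bi-characters left over after the two Faddeev collapses reassemble into precisely the two Gaussians $\bkt{y;-x/2}\bkt{v;-u/2}$ and the pair of Fourier kernels $\bkt{y;s}\bkt{v;t}$ present on the LHS. The hypothesis that every element of $A$ is divisible by $2$ is used throughout: it gives meaning to the half-shifts $x/2$ in the definition of $\varphi_j$ as well as to the expression $(r_2+s_2)/2$ appearing in the key identity above. Up to this adaptation from $\mathbb{R}$ to a general self-dual LCA group satisfying the divisibility hypothesis, the computation is the one carried out in Section~4 of \cite{Kashaev2014}.
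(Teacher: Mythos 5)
Your proposal is correct and follows essentially the same route as the paper: the paper likewise passes to Fourier space (its Lemma giving \eqref{eq75} plays the role of your ``expand via Fourier inversion'' step), applies \eqref{eq50} to the $\tilde f$-factors and \eqref{eq60} to the $\bar{\tilde g}$-factors in parallel, and absorbs the resulting Gaussians through half-shifts of the integration variables via exactly your identity $\bkt{z-w;(z+w)/2}=\bkt{z}\overline{\bkt{w}}$ (which appears implicitly in the factor $\bkt{x+y+u+v+z-w;(z+w)/2}$ of the paper's computation). The only cosmetic difference is direction: the paper transforms the left-hand side of \eqref{eq00} directly into the right-hand side, whereas you reduce both sides to a common middle expression.
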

It is instructive to give an operator interpretation for the formula in \eqref{eq70}. 
We have the following relations 
\begin{equation}\label{eq72}
\bkt{\mathsf{p};x}\mathsf{F}=\mathsf{F}\bkt{\mathsf{q};x},\quad \bkt{\mathsf{q};x}\mathsf{F}=\mathsf{F}\bkt{\mathsf{p};-x},\quad \forall x\in A.
\end{equation}
where $\mathsf{p}$ and $\mathsf{q}$ is the canonical Heisenberg pair:
\begin{equation}
(\bkt{\mathsf{p};x}f)(y)=f(x+y),\quad (\bkt{\mathsf{q};x}f)(y)=\bkt{x;y}f(y).
\end{equation}
By using Dirac's bra-ket notation for the scalar product in $L^2(A)$:
\begin{equation}
\langle f\vert g\rangle\equiv\int_A \bar f(x) g(x)\operatorname{d}\!x,
\end{equation}
for any $(x,y)\in A^2$, we have
\begin{multline}\label{eq73}
\varphi_j(x,y)\equiv\int_A\bkt{t;y} f_j\left(t+\frac x2\right)\bar g_j\left(t-\frac x2\right)\operatorname{d}\!t\\
=\int_{A}(\overline{\bkt{\mathsf{p};-x/2}g_j)}(t)\bkt{t;y}\left( \bkt{\mathsf{p};x/2}f_j\right)(t)\operatorname{d}\!t\\
=\int_{A}(\overline{\bkt{\mathsf{p};-x/2}g_j)}(t)\left(\bkt{\mathsf{q};y} \bkt{\mathsf{p};x/2}f_j\right)(t)\operatorname{d}\!t\\
=\langle\bkt{\mathsf{p};-x/2}g_j|\bkt{\mathsf{q};y} \bkt{\mathsf{p};x/2}f_j\rangle
=\langle g_j|\bkt{\mathsf{p};x/2}\bkt{\mathsf{q};y} \bkt{\mathsf{p};x/2}f_j\rangle\\
=\bkt{x/2;y}\langle g_j|\bkt{\mathsf{q};y} \bkt{\mathsf{p};x}f_j\rangle.
\end{multline}
This formula allows us to easily prove the following lemma.
\begin{lemma}
Let $\varphi_j(x,y)$ be defined as in \eqref{eq70}. Then the following equality holds true
 \begin{equation}\label{eq75}
\varphi_j(x,y)=\int_A \bkt{x;t}\tilde f_j\left(t-\frac y2\right)\bar{\tilde g}_j\left(t+\frac y2\right)\operatorname{d}\!t,
\end{equation}
where 
\begin{equation}
\tilde f\equiv\mathsf{F}^{-1}f,\quad \forall f\in L^2(A).
\end{equation}
\end{lemma}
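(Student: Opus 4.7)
The plan is to start from the operator-theoretic form of $\varphi_j$ already worked out in~\eqref{eq73}, namely
\begin{equation*}
\varphi_j(x,y)=\bkt{x/2;y}\langle g_j|\bkt{\mathsf{q};y}\bkt{\mathsf{p};x}f_j\rangle,
\end{equation*}
and transport everything through the Fourier transform using the intertwining relations~\eqref{eq72}. Since $\mathsf{F}^{-1}$ is unitary on $L^2(A)$, the inner product is invariant under simultaneous application of $\mathsf{F}^{-1}$ to both slots, giving $\bkt{x/2;y}\langle \tilde g_j|\mathsf{F}^{-1}\bkt{\mathsf{q};y}\bkt{\mathsf{p};x}f_j\rangle$. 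Applying the rewritten identities $\mathsf{F}^{-1}\bkt{\mathsf{q};y}=\bkt{\mathsf{p};-y}\mathsf{F}^{-1}$ and $\mathsf{F}^{-1}\bkt{\mathsf{p};x}=\bkt{\mathsf{q};x}\mathsf{F}^{-1}$, I push $\mathsf{F}^{-1}$ all the way to the right and land on $\bkt{x/2;y}\langle \tilde g_j|\bkt{\mathsf{p};-y}\bkt{\mathsf{q};x}\tilde f_j\rangle$.

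Next I will unfold the bra-ket back to an integral. Using the canonical formulas for $(\mathsf{p},\mathsf{q})$ one computes $(\bkt{\mathsf{p};-y}\bkt{\mathsf{q};x}\tilde f_j)(t)=\bkt{x;t-y}\tilde f_j(t-y)$, hence
\begin{equation*}
\varphi_j(x,y)=\bkt{x/2;y}\int_A \bar{\tilde g}_j(t)\bkt{x;t-y}\tilde f_j(t-y)\operatorname{d}\!t.
\end{equation*}
A symmetric change of variable $t=s+y/2$ recenters the arguments as $s\pm y/2$, and bilinearity of the Fourier kernel splits the phase factor as $\bkt{x;s-y/2}=\bkt{x;s}\bkt{x;-y/2}$. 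Collecting the scalar prefactors, the remaining coefficient is $\bkt{x/2;y}\bkt{x;-y/2}$, which collapses to $1$ by symmetry of the bi-character, producing exactly~\eqref{eq75}.

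The only delicate point, and the reason the standing hypothesis that every element of $A$ is divisible by $2$ is imposed, is the meaning and consistency of the half-element $x/2$ and of the kernel value $\bkt{x/2;y}$. The cancellation $\bkt{x/2;y}\bkt{x;-y/2}=1$ needed in the last step rests on the identity $\bkt{x/2;y}=\bkt{x;y/2}$, which follows from the symmetry $\bkt{a;b}=\bkt{b;a}$ once a coherent choice of division by $2$ on $A$ is fixed. Granted this convention, the whole lemma is just a few lines of algebra built on top of~\eqref{eq73}.
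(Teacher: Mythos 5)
Your proposal is correct and follows essentially the same route as the paper: both start from the operator form~\eqref{eq73}, conjugate by $\mathsf{F}$ via the intertwining relations~\eqref{eq72} to replace $f_j,g_j$ by $\tilde f_j,\tilde g_j$, and then unfold the resulting matrix element back into an integral (the paper phrases this last step as ``applying~\eqref{eq73} backwards''). The only nitpick is that the final cancellation $\bkt{x/2;y}\bkt{x;-y/2}=1$ rests on bimultiplicativity of the Fourier kernel, $\bkt{x/2;y}=\bkt{x/2;y/2}^2=\bkt{x;y/2}$, not on its symmetry alone, though you do invoke bilinearity of the kernel elsewhere so this is purely a matter of attribution.
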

\begin{proof}
Indeed, by using  \eqref{eq73} and \eqref{eq72}, we have
\begin{multline}
\varphi_j(x,y)\bkt{x/2;-y}=\langle g_j|\bkt{\mathsf{q};y} \bkt{\mathsf{p};x}f_j\rangle=
\langle g_j|\bkt{\mathsf{q};y} \bkt{\mathsf{p};x}\mathsf{F}\mathsf{F}^{-1}f_j\rangle\\=
\langle g_j|\mathsf{F}\bkt{\mathsf{p};-y} \bkt{\mathsf{q};x}\mathsf{F}^{-1}f_j\rangle=
\langle \mathsf{F}^{-1}g_j\vert \bkt{\mathsf{p};-y} \bkt{\mathsf{q};x}\mathsf{F}^{-1}f_j\rangle\\\equiv
\langle \tilde g_j\vert \bkt{\mathsf{p};-y} \bkt{\mathsf{q};x}\tilde f_j\rangle,
\end{multline}
and formula~\eqref{eq75} now follows by applying \eqref{eq73} backwards with $f_j$ and $g_j$ replaced with $\tilde f_j$ and $\tilde g_j$.
\end{proof}
\begin{proof}[Proof of Theorem~\ref{theorem}] By using \eqref{eq75}, we write
\begin{multline*}
 \varphi_1(x,y)\varphi_3(u,v)\\
 =\int_{A^2}
 \bkt{x;s} \bkt{u;t}\tilde f_1\left(s-\frac y2\right) \tilde f_3\left(t-\frac v2\right)\bar{\tilde g}_1\left(s+\frac y2\right)
\bar{\tilde g}_3\left(t+\frac v2\right)\operatorname{d}\!s\operatorname{d}\!t\\
\end{multline*}
and then continue by applying \eqref{eq50} and \eqref{eq60}
\begin{multline*}
 =\int_{A^4}
  \bkt{x;s} \bkt{u;t} \bkt{v;s} \bkt{y;t} \bkt{z} \overline{\bkt{w}} \tilde f_4\left(t-\frac v2-z\right)\bar{\tilde g}_4\left(t+\frac v2-w\right)\\
 \times\tilde f_2(z) \bar{\tilde g}_2(w)
 \tilde f_0\left(s-\frac y2-z\right)
\bar{\tilde g}_0\left(s+\frac y2-w\right)
\operatorname{d}\!s\operatorname{d}\!t\operatorname{d}\!z\operatorname{d}\!w\\
\end{multline*}
changing the variables of integration $s\mapsto s+(z+w)/2$, $t\mapsto t+(z+w)/2$, the integrations over $s$ and $t$ can be performed by using \eqref{eq75}
\begin{multline*}
 =\int_{A^2}
 \bkt{x+y+u+v+z-w;(z+w)/2} 
 \varphi_4(y+u,v+z-w) \\
 \times\tilde f_2(z) \bar{\tilde g}_2(w)\varphi_0(x+v,y+z-w)
\operatorname{d}\!z\operatorname{d}\!w\\
 =\int_{A^2}
\bkt{x+y+u+v+z;w+z/2}  
 \varphi_4(y+u,v+z) \tilde f_2(z+w) \bar{\tilde g}_2(w)\varphi_0(x+v,y+z)
\operatorname{d}\!z\operatorname{d}\!w\\
\end{multline*}
where we have shifted $z\mapsto z+w$. Finally, by shifting $w\mapsto w-z/2$ and using again \eqref{eq75}, we obtain
\begin{multline*}
 =\int_{A^2}
 \bkt{x+y+u+v+z;w}
 \varphi_4(y+u,v+z) \tilde f_2\left(w+\frac z2\right) \bar{\tilde g}_2\left(w-\frac z2\right)\varphi_0(x+v,y+z)\\
\times \operatorname{d}\!z\operatorname{d}\!w
 =\int_{A}
 \varphi_4(y+u,v+z) 
 \varphi_2(x+y+u+v+z,-z)  \varphi_0(x+v,y+z)
\operatorname{d}\!z.
\end{multline*}
\end{proof}
\subsection{A special case with $g_j(x)=1$}
In the special case when $g_j(x)=1$, we obtain from \eqref{eq70}
\begin{equation}
\varphi_j(x,y)=\int_A \bkt{y;t}f_j\left(t+x/2\right)\operatorname{d}\!t=\int_A \bkt{y;t-x/2}f_j\left(t\right)\operatorname{d}\!t=\bkt{x;-y/2}\tilde f_j\left(-y\right),
\end{equation}
which is a solution quasiperiodic in variable $x$:
\begin{equation}
\varphi_j(x+b,y)=\varphi_j(x,y)\bkt{y/2;-b}.
\end{equation}
According to \cite{Kashaev2014} (Section~3 thereof)  this function is of the automorphic beta pentagon  type $(B,0,h)$ with $h_y(b)=\bkt{y/2;-b}$ if and only if $\bkt{b;b'}=1$ for all $b,b'\in B$. In that case, Theorem~1 of  \cite{Kashaev2014} implies that the function
 \begin{multline}
\psi_j(x,y)=\int_B\varphi_j(x,y+b)\bkt{\mu_j-(b+x)/2;b}\operatorname{d}\!b\\
=\bkt{x;-y/2}\int_B(\mathsf{F}f_j)(y+b)\overline{\bkt{b}}\bkt{b;\mu_j-x}\operatorname{d}\!b,\quad\forall (j,x,y)\in[4]\times A^2,
\end{multline}
where
\begin{equation}\label{eq:mui}
\mu_0=\mu_1=\alpha,\ \mu_2=\alpha+\beta,\ \mu_3=\mu_4=\beta,
\end{equation}
$\alpha$ and $\beta$ being arbitrary elements of $A$,
satisfies the automorphicity relations
\begin{multline}\label{eq:autopsi}
\psi_j(x+b,y)=\bkt{y/2;-b}\psi_j(x,y),\\
\psi_j(x,y+b)=\overline{\bkt{b}}\bkt{-\mu_j+x/2;b}\psi_j(x,y),\quad\forall (j,b,x,y)\in[4]\times B\times A^2,
\end{multline}
and the five term integral identity
\begin{equation}\label{eq11}
  \psi_1(x,y)\psi_3(u,v) =\int_{A/B}
 \psi_4(u+y,v-z) 
 \psi_2(x+y+u+v- z,z)  \psi_0(x+v,y- z)\operatorname{d}\!z
\end{equation}
where the domain of integration indicates that the integrand is invariant under $B$-shifts of the  integration variable.
\begin{example}
 Fix a positive integer $\myN$ and choose for $A$  the LCA group $\REALS\oplus\Z/\myN\Z$.  It is Pontryagin self-dual, and any element is divisible by 2 if $\myN$ is odd. In that case, the subgroup 
 $B=(\myN^{-1/2},1)\Z$ is dual to the quotient group $A/B\simeq \TORUS$.
\end{example}

\subsection{Fourier transformation formula for a quantum dilogarithm}

\begin{theorem}
 Let $\phi\colon A\to \TORUS$ be a quantum dilogarithm over a self-dual LCA group $A$ such that $(\mathsf{F}\phi)(x)^{\pm1}\ne 0$ almost everywhere. Then there exists a continuous group homomorphism $g\colon A\to\REALS^\times$ and an element of order two $\varepsilon\in A$ such that the one parameter family of integrals
 \begin{equation}
\phi_t(y)\equiv \int_A(\mathsf{F}^{-1}\phi)(x)|g(x)|^{t}\bkt{y;x}\operatorname{d}\! x,\quad t\in [0,1],
\end{equation}
which absolutely converge for $t\in]0,1[$, continuously interpolates between $\phi(y)=\phi_0(y)$ and 
\begin{equation}
\phi_1(y)= \frac{\bkt{\varepsilon-y}\tilde\phi(\varepsilon-y)}{\gamma\phi(0)^{2}},\quad \gamma\equiv\int_A\bkt{x}\operatorname{d}\! x
\end{equation}
\end{theorem}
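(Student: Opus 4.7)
The plan is to derive the formula from Faddeev's five-term relation~\eqref{pent} and to establish the interpolation by the standard regularization technique of inserting $|g(x)|^t$ as a convergence factor.

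\textbf{Step 1 (pentagon as an integral identity).} First, I realize Faddeev's pentagon $\phi(\mathsf{p})\phi(\mathsf{q})=\phi(\mathsf{q})\phi(\mathsf{p}+\mathsf{q})\phi(\mathsf{p})$ in the canonical Heisenberg representation on $L^2(A)$, where $\phi(\mathsf{q})$ acts as multiplication by $\phi(y)$, $\phi(\mathsf{p})$ as convolution with $\tilde\phi$, and $\phi(\mathsf{p}+\mathsf{q})$ has integral kernel $\tilde\phi(x)\bkt{x}$. Applying both sides to an arbitrary $f\in L^2(A)$ and comparing the resulting distributional kernels produces the functional equation
\begin{equation*}
\tilde\phi(w)\,\phi(w+y)=\phi(y)\int_A\tilde\phi(x)\,\tilde\phi(w-x)\,\bkt{x}\bkt{x;y}\operatorname{d}\!x,\qquad(w,y)\in A^2,
\end{equation*}
which is the integral shadow of the pentagon and encodes the Fourier self-duality of $\phi$.

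\textbf{Step 2 (extracting $g$ and $\varepsilon$).} Using the explicit representation $\phi=\Phi_{q,\chi}$ from~\eqref{mfun}, the dominant Gaussian-times-character asymptotics of $\tilde\phi$ produce a well-defined normalized limit
\begin{equation*}
\lim_{w\to\infty}\frac{\tilde\phi(w-x)}{\tilde\phi(w)}\;=\;\bkt{\varepsilon;x}^{-1}\,\bkt{x}^{-1}\,|g(x)|,
\end{equation*}
taken along a ray in $A$ on which $|\chi|$ is maximal. This limit simultaneously supplies the continuous homomorphism $g\colon A\to\REALS^\times$ (obtained from the exponential modulus of $\tilde\phi$) and the order-two element $\varepsilon\in A$ (compensating the half-period shift inherited from the inversion relation $\phi(x)\phi(-x)=\bkt{x}\phi(0)^2$); in the absence of $2$-torsion in $A$ one simply has $\varepsilon=0$. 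With $g$ and $\varepsilon$ so fixed, $|g(x)|^t$ provides exactly the decay needed to make $\phi_t(y)$ absolutely convergent for $t\in{]0,1[}$, and dominated convergence on any compact subinterval yields continuity of $t\mapsto\phi_t(y)$.

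\textbf{Step 3 (endpoint identification).} The boundary values are then computed by passing to limits under the integral. At $t=0$ the identity $\phi_0(y)=\phi(y)$ is nothing but Fourier inversion $\mathsf{F}\tilde\phi=\phi$, interpreted as the one-sided limit from within $]0,1[$. At $t=1$, substituting the asymptotic of Step~2 rewrites
\begin{equation*}
\phi_1(y)\;=\;\bkt{\varepsilon}^{-1}\lim_{w\to\infty}\frac{1}{\tilde\phi(w)}\int_A \tilde\phi(x)\,\tilde\phi(w-x)\,\bkt{x}\bkt{x;y-\varepsilon}\operatorname{d}\!x,
\end{equation*}
which by Step~1's functional equation (with $y$ replaced by $y-\varepsilon$) equals $\bkt{\varepsilon}^{-1}\phi(w+y-\varepsilon)/(\tilde\phi(w)\phi(y-\varepsilon))$ in the limit. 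A second application of the inversion relation and the Gauss integral $\gamma=\int_A\bkt{x}\operatorname{d}\!x$ (the latter controlling the $w\to\infty$ normalization of $\tilde\phi(w)$) finally collapses this to $\bkt{\varepsilon-y}\tilde\phi(\varepsilon-y)/(\gamma\phi(0)^{2})$.

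\textbf{Main obstacle.} The delicate point is justifying the passage to the limit in Step~3 rigorously, namely making the asymptotic replacement $\tilde\phi(w-x)/\tilde\phi(w)\to\bkt{\varepsilon;x}^{-1}\bkt{x}^{-1}|g(x)|$ uniform enough in $x$ to commute with the integration. This will require uniform estimates on the $q$-Pochhammer factor $(-q^{1/2}\chi(x);q)_\infty$ underlying~\eqref{mfun} and a careful selection of the escape direction for $w$, especially when $A$ has torsion components, where the direction must be chosen compatibly with the self-dual pairing so that the oscillatory factor $\bkt{x}$ does not interfere with the Gaussian decay.
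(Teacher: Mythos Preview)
Your Step~1 is fine and matches the paper: the identity
\[
\tilde\phi(w)\,\phi(w+y)=\phi(y)\int_A\tilde\phi(x)\,\tilde\phi(w-x)\,\bkt{x}\bkt{x;y}\operatorname{d}\!x
\]
is exactly the intermediate relation the authors obtain after Fourier-transforming the integral form of the pentagon in one variable.

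The genuine gap is in Steps~2--3. You invoke the explicit representation $\phi=\Phi_{q,\chi}$ and an asymptotic limit $w\to\infty$ to identify $g$ and $\varepsilon$, but the theorem is stated for an \emph{arbitrary} quantum dilogarithm over a self-dual LCA group, with no assumption that $\phi$ arises from a $q$-Pochhammer construction. On a general $A$ there is no canonical ``direction to infinity'', and you yourself flag the exchange of limit and integral as the main obstacle; in fact that obstacle is fatal at this level of generality, because neither the existence of the limit $\tilde\phi(w-x)/\tilde\phi(w)$ nor its claimed form follows from the hypotheses.

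The paper avoids asymptotics entirely. Starting from your Step~1 identity, it takes one more inverse Fourier transform (in $y$) to reach the Ramanujan-type relation
\[
\int_A\frac{\phi(u+x)}{\phi(u)}\bkt{u;-z}\operatorname{d}\!u=\frac{\tilde\phi(z)\tilde\phi(x-z)\bkt{z}}{\tilde\phi(x)},
\]
then takes the complex conjugate and shifts variables. Comparing the two versions yields an \emph{exact} cocycle equation
\[
\frac{h(x+z)}{h(x)h(z)}=\bkt{x;z},\qquad h(x)\equiv\frac{\overline{\tilde\phi}(x)}{\tilde\phi(-x)},
\]
whose general solution is $h(x)=\bkt{x}/g(x)$ with $g\colon A\to\REALS^\times$ a continuous homomorphism. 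This produces the key pointwise identity
\[
\tilde\phi(x)\,\bkt{\varepsilon;x}\,|g(x)|=\overline{\tilde\phi(-x)\bkt{x}},
\]
where $\varepsilon$ is the order-two element corresponding to the sign character of $g$. From here $\phi_1(y)$ is computed by a direct manipulation using unitarity of $\phi$ and the inversion relation---no limits in auxiliary parameters, no uniform estimates, and no reference to $\Phi_{q,\chi}$. Your approach could perhaps be pushed through for the concrete examples $A=\REALS$ or $A=\REALS\oplus(\INTEGERS/\myN\INTEGERS)$, but it does not prove the theorem as stated.
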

\begin{proof} 
 We start from the following integral identity equivalent to the five-term Faddeev relation:
 \begin{equation}
\tilde \phi(x)\tilde \phi (y)=\bkt{x,-y}\int_A \tilde \phi(y-z)\tilde \phi (z)\tilde \phi(x-z)\bkt{z}\operatorname{d}\! z,\quad \tilde\phi\equiv \mathsf{F}^{-1}\phi,
\end{equation}
and apply the Fourier transformation on variable $y$ to both sides of the equality, i.e. we multiply by $\bkt{y;u+x}$ and integrate over $y$:
 \begin{multline}
\tilde \phi(x)\phi (x+u)=\int_{A^2} \bkt{u,y}\tilde \phi(y-z)\tilde \phi (z)\tilde \phi(x-z)\bkt{z}\operatorname{d}\! y\operatorname{d}\! z\\
=\int_{A^2} \bkt{u,y+z}\tilde \phi(y)\tilde \phi (z)\tilde \phi(x-z)\bkt{z}\operatorname{d}\! y\operatorname{d}\! z
=\int_{A} \bkt{u,z}\phi(u)\tilde \phi (z)\tilde \phi(x-z)\bkt{z}\operatorname{d}\! z.
\end{multline}
Next, we divide by $\tilde \phi(x)\phi(u)$ and take the inverse Fourier transform on variable $u$:
\begin{equation}\label{raman}
\int_A\frac{\phi (u+x)}{\phi(u)}\bkt{u;-z}\operatorname{d}\! u=\frac{\tilde \phi (z)\tilde \phi(x-z)\bkt{z}}{\tilde\phi(x)},
\end{equation}
and the complex conjugate
\begin{equation}
\int_A\frac{\phi(u)}{\phi (u+x)}\bkt{u;z}\operatorname{d}\! u=\frac{\overline{\tilde \phi }(z)\overline{\tilde \phi}(x-z)}{\overline{\tilde \phi}(x)\bkt{z}}.
\end{equation}
 Multiplying the last equality by $\bkt{x;z}$ and shifting the integration variable $u\mapsto u-x$, we observe that the left hand side of the obtained equality is of the same form 
as the integral in \eqref{raman} with negated $x$ and $z$. Thus, we obtain the following equality:
\begin{equation}
\frac{\tilde \phi (-z)\tilde \phi(-x+z)\bkt{z}}{\tilde\phi(-x)}=\frac{\overline{\tilde \phi }(z)\overline{\tilde \phi}(x-z)}{\overline{\tilde \phi}(x)\bkt{z}}\bkt{x;z}.
\end{equation}
Shifting the variable $x\mapsto x+z$ and cancelling some simple factors, we rewrite the latter equality in the form
\begin{equation}
\frac{h(x+z)}{h(x)h(z)}=\bkt{x;z},\quad h(x)\equiv \frac{\overline{\tilde \phi }(x)}{\tilde \phi (-x)},
\end{equation}
with the solution
\begin{equation}
h(x)=\bkt{x} /g(x),
\end{equation}
where $g(x)$ is a group homomorphism from the additive group $A$ to the multiplicative group $\REALS^\times$. Let us denote by $\varepsilon\in A$ the element of order two corresponding to the character associated to the sign of $g(x)$, i.e.
\begin{equation}
\bkt{\varepsilon;x}\equiv \frac{g(x)}{|g(x)|}.
\end{equation}
Then we have an equality of the form
\begin{equation}\label{analyt}
\tilde \phi (x)\bkt{\varepsilon;x}|g(x)|=\overline{\tilde \phi (-x)\bkt{x}},
\end{equation}
from which, by multiplying both sides by $\bkt{y;x}$, integrating over $x$, and using the unitarity of $\phi(y)$, we remark that the following integral is at least conditionally convergent for all values of the real parameter  $t$ in the closed unit interval, $t\in[0,1]$, 
\begin{equation}
\phi_t(y)\equiv\int_A\tilde \phi (x)|g(x)|^{t}\bkt{y;x}\operatorname{d}\! x.
\end{equation}
Indeed, for $t=0$, we have
$\phi_0(y)=\phi(y)$. On the other hand, for $t=1$, from \eqref{analyt}, by using unitarity of $\phi(z)$ and the inversion relation, we calculate
\begin{multline}
 \phi_1(y+\varepsilon)=\int_A\bkt{y;x}\overline{\tilde \phi (-x)\bkt{x}}\operatorname{d}\! x
 =\int_{A^2}\bkt{y;x}\overline{\phi(z)\bkt{z;x}\bkt{x}} \operatorname{d}\! z\operatorname{d}\! x\\
  =\int_{A^2}\frac{\bkt{y;x}\bkt{-z;x}}{\phi(z)\bkt{x}}\operatorname{d}\! z\operatorname{d}\! x=
   \frac1{\phi(0)^{2}}\int_{A^2}\frac{\phi(-z)\bkt{y-z;x}}{\bkt{z}\bkt{x}}\operatorname{d}\! z\operatorname{d}\! x\\
 =
   \frac1{\phi(0)^{2}}\int_{A^2}\frac{\phi(z)\bkt{y+z;x}}{\bkt{z}\bkt{x}}\operatorname{d}\! x\operatorname{d}\! z 
    =
   \frac1{\gamma\phi(0)^{2}}\int_{A}\frac{\phi(z)\bkt{y+z}}{\bkt{z}}\operatorname{d}\! z\\ 
      =
   \frac{\bkt{y}}{\gamma\phi(0)^{2}}\int_{A}\phi(z)\bkt{y;z}\operatorname{d}\! z =
   \frac{\bkt{y}\tilde\phi(-y)}{\gamma\phi(0)^{2}},\quad \gamma\equiv\int_A\bkt{x}\operatorname{d}\! x.
\end{multline}
\end{proof}

\begin{example} In the particular case of $A=\REALS\oplus(\INTEGERS/\myN\INTEGERS)$, with $\phi=\gfad_\theta$, $\varepsilon=(0,M)$ for some $M\in\INTEGERS/\myN\INTEGERS$ of order two such that $M=0$ if $\myN\ne0\pmod 8$, 
\begin{equation}
|g(x,n)|=e^{2\pi\imun c_\theta x/\sqrt{\myN}},
\end{equation}
 the function $\phi_t(x)$ is the analytically continued to complex domain function $\gfad_\theta$: 
\begin{equation}
\phi_t(x,n)=\gfad_\theta\!\left(x+tc_\theta\myN^{-1/2},n\right)
\end{equation}
The Fourier transformation formula in this case takes the form
\begin{multline}\label{fourierf}
 \int_{\lca}\gfad_\theta(x,m)\bkt{y,n;x,m}^{-1}\operatorname{d}(x,m)\\=
\gfad_\theta(-y+c_\theta\myN^{-1/2},-n+M)\bkt{y,n}^{-1}e^{\pi\imun(\myN-4c_\theta^2\myN^{-1})/12}.
\end{multline}
\end{example}

\subsection{Construction of the partition function}
Now suppose we have a solution to the \emph{shaped} pentagon relation over a self-dual LCA group $A$, which corresponds to a function of the Faddeev type of the form
\begin{equation}
f_j(x)=\psi_{a_j,c_j}(x),\quad j\in [4],\quad x\in A,
\end{equation}
where $\psi_{a,c}(x)$ is the ``charged" quantum dilogarithm,
and where parameters $a_j$, $c_j$ are positive real numbers satisfying the conditions
\begin{equation}
b_j\equiv 1-a_j-c_j>0,\quad  j\in [4],
\end{equation}
and
\begin{equation}\label{pentagonconditions}
 a_1=a_0+a_2,\ a_3=a_2+a_4,\ c_1=c_0+a_4,\ c_3=a_0+c_4,\ c_2= c_1+c_3.
\end{equation}
These conditions are satisfied by dihedral angles of ideal hyperbolic tetrahedra. We also assume that this solution realizes the tetrahedral symmetries which boil down to two relations of the form
\begin{equation}
\overline{\psi_{a,c}(x)}= \bkt{x} \psi_{c,a}(-x)\xi,
\end{equation}
for some $\xi\in\TORUS$ independent of $x$,
and 
\begin{equation}
(\mathsf{F}\psi_{a,c})(x)\equiv\int_A\psi_{a,c}(y)\bkt{y;x}\operatorname{d}\! y=\gamma^{-1/3}\bkt{x}\psi_{c,b}(-x),\quad b\equiv \lambda-a-c,
\end{equation}
where
\begin{equation}
\gamma\equiv \int_A\bkt{x}dx\in\TORUS.
\end{equation}

A \emph{triangulation} $X$ is a $\Delta$-triangulation in the sense of A.~Hatcher~\cite{MR1867354}. We denote by $\Delta_i(X)$ the set of $i$-dimensional cells of $X$. We refer to~\cite{AndersenKashaev2011} for further notation and the detailed description of the combinatorial setting of shaped triangulated oriented pseudo (STOP) 3-manifolds. 

Let $T\subset\REALS^3$ be a shaped  tetrahedron with vertex ordering mapping
\begin{equation}
v\colon \{0,1,2,3\}\to \Delta_0(T).
\end{equation}
The  \emph{Boltzmann weight function} of $T$ is a map
$B_T\colon A^{\Delta_1(T)}\to\COMPLEXS$  defined by the formula
\begin{equation}
B_T(x)=g_{\alpha_1,\alpha_3}(x_{01}+x_{23}-x_{03}-x_{12},\, x_{03}+x_{12}-x_{02}-x_{13})
\end{equation}
if $T$ is positive and complex conjugate thereof, otherwise.
 Here 
 \begin{equation}
x_{ij}\equiv x(v_iv_j),\quad\alpha_i\equiv\alpha_T(v_0v_i),
\end{equation}
\begin{equation}\label{eq:gac}
g_{a,c}(x,y)\equiv\bkt{x;-y/2}\int_B(\mathsf{F}\psi_{a,c})(y+b)\overline{\bkt{b}}\bkt{b;-x}\operatorname{d}\!b,
\end{equation}
Likewise, the Boltzmann weight function of $X$ is a map
$B_X\colon A^{\Delta_1(X)}\to\COMPLEXS$  defined as the product of tetrahedral Boltzmann weight functions
\begin{equation}
B_X(x)=\prod_{T\in\Delta_3(X)}B_T\left(x\vert_{\Delta_1(T)}\right)
\end{equation}

\begin{theorem}\label{main}
Let $X$ be a closed STOP 3-manifold. Then 
\begin{itemize}
\item there exists a unique function
 $\slashed{B}_X\colon A/B^{\Delta_1(X)}\to \COMPLEXS$ such that 
 \begin{equation}
B_X(x)=\slashed{B}_X(\pi\circ x),
\end{equation}
 where $\pi\colon A\to A/B$ is the canonical projection;
 \item the absolute value of the \emph{partition function}
\begin{equation}\label{eq:si}
Z(X)\equiv\int_{A/B^{\Delta_1(X)}}\slashed{B}_X(x)dx
\end{equation}
is invariant under shaped $2-3$ and $3-2$ Pachner moves, the angle gauge transformations, and under all changes of edge orientations in the $\Delta$ triangulation.
\end{itemize}
\end{theorem}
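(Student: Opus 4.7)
The plan is to treat the two clauses of the theorem separately. For the existence of $\slashed{B}_X$, I would record how a shift $x_e\mapsto x_e+b$, $b\in B$, of a single edge label affects each tetrahedron $T$ containing $e$. Since the arguments of $g_{a_j,c_j}$ in~\eqref{eq:gac} are the linear forms $x=x_{01}+x_{23}-x_{03}-x_{12}$ and $y=x_{03}+x_{12}-x_{02}-x_{13}$, the automorphicity relations~\eqref{eq:autopsi} produce phase multipliers of the form $\bkt{y/2;-b}^{\pm 1}$ and $\overline{\bkt{b}}^{\pm 1}\bkt{-\mu_j+x/2;b}^{\pm 1}$, with signs dictated by how $e$ sits in $T$ and by the orientation of $T$. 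In a closed STOP 3-manifold the tetrahedra containing $e$ form a cyclic star around $e$, and the corresponding signed sum of shape charges and of the linear forms $x,y$ at $e$ vanishes by the STOP edge-balance condition (the global analogue of~\eqref{pentagonconditions}). Hence all multipliers collapse to $1$ and $B_X$ descends to $\slashed{B}_X$ on $(A/B)^{\Delta_1(X)}$.

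For invariance of $|Z(X)|$ under shaped $2\leftrightarrow 3$ Pachner moves, the shape relations~\eqref{pentagonconditions} are exactly those obeyed by the dihedral angles of the five tetrahedra in a shaped bistellar $3$--$2$ configuration, so the five-term identity~\eqref{eq11}---the $A/B$-descent of the beta pentagon relation~\eqref{eq00} supplied by Theorem~\ref{theorem}---matches term-by-term the partition function contributions on the two sides of the move. The internal edge of the three-tetrahedron configuration is precisely the integration variable in~\eqref{eq11}. This gives invariance of $Z(X)$ itself, not merely $|Z(X)|$, under $2\to 3$ and $3\to 2$ moves.

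For invariance under angle gauge transformations and under edge-orientation reversals, I would invoke the tetrahedral symmetries of $\psi_{a,c}$ stated just above the theorem: the CPT-style identity $\overline{\psi_{a,c}(x)}=\bkt{x}\psi_{c,a}(-x)\xi$ and the Fourier self-duality $(\mathsf{F}\psi_{a,c})(x)=\gamma^{-1/3}\bkt{x}\psi_{c,b}(-x)$. Transported through~\eqref{eq:gac} and integrated over $B$, they generate the full $S_4$-action on $B_T$ up to a universal scalar in $\TORUS$. Angle gauge transformations shift the $\mu_j$ by an edge coboundary, which is absorbed into a translation of the integration variables plus an overall phase in $\TORUS$. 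Both types of residual scalars cancel when one passes to $|Z(X)|$, which is why absolute value, rather than $Z(X)$ itself, is the invariant.

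The principal obstacle is the first step. The Pachner, gauge and orientation steps are essentially local algebraic consequences of identities already established in the preceding sections, but the descent to $\slashed{B}_X$ is where the global topological data of $X$ enters: one must match the signed automorphicity exponents from~\eqref{eq:autopsi}, summed over the cyclic star of every internal edge, with the combinatorial balance provided by the STOP structure, while keeping careful track of signs arising from tetrahedron orientations and of the role of the distinguished subgroup $B\subset A$.
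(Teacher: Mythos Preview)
The paper does not supply a proof of Theorem~\ref{main}: the statement is followed only by two remarks, and the argument is implicitly deferred to the template of \cite{MR3227503} together with the algebraic machinery assembled in Sections~\ref{GF}--\ref{FoFT} (the beta pentagon relation of Theorem~\ref{theorem}, the automorphicity relations~\eqref{eq:autopsi}, the descended five-term identity~\eqref{eq11}, and the charged symmetries~\eqref{f1}--\eqref{f3}). Your outline is therefore \emph{more} detailed than anything the paper itself offers, and its architecture---descent via automorphicity, Pachner invariance via~\eqref{eq11}, residual phases absorbed into $|Z|$ via the charged symmetries---is the intended one.

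Two points deserve tightening. First, the Boltzmann weight is built from $g_{a,c}$ of~\eqref{eq:gac}, not from the $\psi_j$ of~\eqref{eq:autopsi}; the two formulas differ by the $\mu_j$ in the character $\bkt{b;\mu_j-x}$. You should either derive the $B$-automorphicity of $g_{a,c}$ directly from~\eqref{eq:gac} (using the hypothesis $\bkt{b;b'}=1$ for $b,b'\in B$ recorded just before the definition of $\psi_j$), or make explicit the identification of $g_{a,c}$ with a specialization of $\psi_j$. Second, you conflate two distinct conditions when you write ``the STOP edge-balance condition (the global analogue of~\eqref{pentagonconditions})'': relations~\eqref{pentagonconditions} are the shape compatibilities for a single $2$--$3$ move, whereas the descent of $B_X$ to $(A/B)^{\Delta_1(X)}$ is governed by the purely combinatorial fact that in a closed pseudo $3$-manifold the tetrahedra around an edge form a cycle in which the signed occurrences of that edge in the linear forms cancel. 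The dihedral-angle balance around edges is a separate STOP axiom and is what is needed later, for gauge invariance, not for the descent. Keeping these two mechanisms apart will make the argument go through cleanly.
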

\begin{remark}
The state integral~\eqref{eq:si} extends to arbitrary (non closed) STOP 3-manifolds. In that case one has to integrate over only the state variables living on the internal edges. The result is a  continuous section of a complex line bundle over $A/B^{\Delta_1(\partial X)}$.
\end{remark}
\begin{remark}
With additional normalization factors associated with regular vertices, the partition function~\eqref{eq:si} can be shown to be invariant under shaped $2-0$  ($0-2$) moves which remove (add) regular vertices. In particular, this additional property permits to define link invariants in compact oriented 3-manifolds by using arbitrary H-triangulations.
\end{remark}

\section{A quantum dilogarithm over $\REALS\oplus(\INTEGERS/\myN\INTEGERS)$ }

\label{QDN}

 For any positive integer $\myN$, we denote by $\lca$ the LCA group $\REALS\oplus(\INTEGERS/\myN\INTEGERS)$ with the normalized Haar measure $\operatorname{d}(x,n)$ corresponding to the integral
\begin{equation}
\int_{\lca}f(x,n)\operatorname{d}(x,n)\equiv\frac1{\sqrt{\myN}}\sum_{n\in\INTEGERS/\myN\INTEGERS}\int_{\REALS}f(x,n)\operatorname{d}\!x
\end{equation}
for any (integrable) complex valued function $f\colon \lca\to\COMPLEXS$.  Notice that the group $\lca $ with $\myN=1$ is naturally identified with real axis $\REALS$ with its standard Lebesgue measure.
We will work with the following Gaussian exponential on $\lca$:
\begin{equation}
\bkt{x,n}=e^{\pi\imun x^2}e^{-\pi\imun n(n+N)/N}
\end{equation}
which corresponds to the following Fourier coefficients:
\begin{equation}
\bkt{x,m;y,n}=e^{2\pi\imun xy}e^{-2\pi\imun mn}.
\end{equation}
 Let us fix $\la\in\TORUS$ lying in the interior of the first quadrant, i.e. 
\begin{equation}
|\la|=1,\quad \Re\la>0,\quad\Im\la\ge0,
\end{equation} 
and denote
\begin{equation}
\cla\equiv \imun\left(\la+\la^{-1}\right)/2.
\end{equation}
We rewrite the function defined in \eqref{dfun} and \eqref{mfun} as the following explicit formula
\begin{equation}\label{gfad}
\gfad_{\theta} (x,n)=\prod_{j=0}^{\myN -1}\fad_{\theta}\!\left(\frac{x}{\sqrt{\myN }}+(1-\myN ^{-1})\cla-\imun{\la}^{-1}\frac{j}{\myN }-\imun\la\left\{\frac{j+n}{\myN }\right\}\right)
\end{equation}
where $\{x\}$ denotes the fractional part of $x$ and $\fad_{\theta}(z)$ is Faddeev's quantum dilogarithm over $\REALS$ defined for $\Im\theta>0$ 
as the following ratio of infinite products
\begin{equation}
\fad_{\theta}(z)\equiv\left(e^{2\pi\la(z+\cla)};e^{2\pi\imun\la^2}\right)_\infty/\left(e^{2\pi\la^{-1}(z-\cla)};e^{-2\pi\imun\la^{-2}}\right)_\infty,
\end{equation}
see also \eqref{mfun},  \eqref{ffun}. The derivation of \eqref{gfad} from \eqref{dfun} and \eqref{mfun} is based on the following obvious identity for infinite products:
\begin{equation}
(x;q)_\infty=\prod_{j=0}^{\myN-1}(xq^j;q^\myN)_\infty.
\end{equation}

\begin{remark}
 In formula~\eqref{gfad}, one can replace the arguments by their complex conjugates without changing the function itself. This can be shown by the following change of the summation index:
 \begin{equation}
j\mapsto j'=\myN-1-\left\{\frac{j+n}{\myN}\right\}.
\end{equation}

\end{remark}
\subsection{The inversion relation}
\begin{lemma}
The following inversion relation holds true:
 \begin{equation}\label{inversion}
\gfad_{\theta}(x,n)\gfad_{\theta}(-x,-n)=\bkt{x,n}e^{-\pi\imun(\myN +2\cla^2 \myN ^{-1})/6}.
\end{equation}
\end{lemma}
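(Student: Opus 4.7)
The plan is to exploit the explicit product formula \eqref{gfad} together with Faddeev's inversion relation, reducing the identity to a Gaussian bookkeeping exercise.

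First I would use the product representation to write, with
\[
z_j(x,n) \equiv \frac{x}{\sqrt{\myN}} + (1-\myN^{-1})\cla - \imun\theta^{-1}\frac{j}{\myN} - \imun\theta\left\{\frac{j+n}{\myN}\right\},
\]
the expression $\gfad_{\theta}(x,n)\gfad_{\theta}(-x,-n) = \prod_{j=0}^{\myN-1}\fad_{\theta}(z_j(x,n))\,\fad_{\theta}(z_{\myN-1-j}(-x,-n))$, re-indexing the second product by $j\mapsto \myN-1-j$. The key combinatorial observation is that this pairing turns the two arguments into negatives of one another: using $2\cla = \imun(\theta+\theta^{-1})$ one computes
\[
z_j(x,n) + z_{\myN-1-j}(-x,-n) = \imun\theta\left[\frac{\myN-1}{\myN} - \left\{\tfrac{j+n}{\myN}\right\} - \left\{\tfrac{\myN-1-j-n}{\myN}\right\}\right],
\]
and the bracket vanishes identically since writing $j+n = q\myN+m$ with $0\le m<\myN$ yields $\{(\myN-1-j-n)/\myN\} = (\myN-1-m)/\myN$, so the two fractional parts sum to $(\myN-1)/\myN$.

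Next I would apply the inversion relation for Faddeev's quantum dilogarithm over $\REALS$, namely $\fad_{\theta}(z)\fad_{\theta}(-z) = \fad_{\theta}(0)^{2}\,e^{\imun\pi z^{2}}$, to each paired factor. This collapses the product to
\[
\gfad_{\theta}(x,n)\gfad_{\theta}(-x,-n) = \fad_{\theta}(0)^{2\myN}\exp\Bigl(\imun\pi\sum_{j=0}^{\myN-1} z_j(x,n)^{2}\Bigr).
\]
It remains to evaluate the Gaussian sum. Expanding $z_j^{2}$ and using the elementary identities
\[
\sum_{j=0}^{\myN-1} j = \tfrac{\myN(\myN-1)}{2},\quad \sum_{j=0}^{\myN-1} j^{2} = \tfrac{\myN(\myN-1)(2\myN-1)}{6},\quad \sum_{j=0}^{\myN-1}\left\{\tfrac{j+n}{\myN}\right\}^{k} = \sum_{j=0}^{\myN-1}\tfrac{j^{k}}{\myN^{k}},
\]
together with the shifted cross-term $\sum_{j=0}^{\myN-1} j\{(j+n)/\myN\}$, which after re-indexing by $k=(j+n)\bmod \myN$ becomes a rational function of $n$ yielding precisely the combination $x^{2}-n(n+\myN)/\myN$ up to a constant depending only on $\myN$ and $\theta$ (through $\cla$).

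Finally, combining the constant from the Gaussian sum with $\fad_{\theta}(0)^{2\myN}$ and the standard evaluation $\fad_{\theta}(0)^{2}=e^{-\imun\pi(1+2\cla^{2})/6}$ (which itself corresponds to the $\myN=1$ case of the identity being proved), the total constant reduces to $e^{-\imun\pi(\myN+2\cla^{2}\myN^{-1})/6}$, as claimed. The main obstacle is the Gaussian bookkeeping in the third step: the cross term $\sum_j j\{(j+n)/\myN\}$ must be tracked carefully to produce the precise coefficient $-n(n+\myN)/\myN$ in the exponent of $\bkt{x,n}$. An alternative that avoids this bookkeeping is to note that both sides are of the form $\bkt{x,n}$ times an $(x,n)$-independent constant (since the product has the correct quasi-periodicities under $x\mapsto x+\alpha$ and $n\mapsto n+1$), and then to verify the constant at $(x,n)=(0,0)$ directly using the symmetric product $\prod_{j}\fad_{\theta}(\cla(\myN-1-2j)/\myN)^{2}$, which is pairwise palindromic and reduces to the $\myN=1$ case.
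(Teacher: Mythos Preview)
Your proposal is correct and follows essentially the same approach as the paper: pair the factors in the product \eqref{gfad} via $j\leftrightarrow \myN-1-j$, verify that the paired arguments are negatives of one another using the fractional-part identity, apply Faddeev's inversion $\fad_\theta(z)\fad_\theta(-z)=\fad_\theta(0)^2 e^{\pi\imun z^2}$, and finish with the evaluation of $\sum_j z_j^2$ together with $\fad_\theta(0)=e^{-\pi\imun(1+2\cla^2)/12}$. The paper simply writes ``by direct calculation'' for the Gaussian sum, whereas you sketch the bookkeeping and also note the quasi-periodicity shortcut; both are fine.
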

\begin{proof}
Denoting
\begin{equation}
a_j(x,n)\equiv\frac{x}{\sqrt{\myN }}+(1-\myN ^{-1})\cla-\imun{\la}^{-1}\frac{j}{\myN }-\imun\la\left\{\frac{j+n}{\myN }\right\},
\end{equation}
we remark that
\begin{multline}
 a_j(-x,-n)+a_{\myN-1-j}(x,n)\\=2(1-\myN ^{-1})\cla-\imun{\la}^{-1}(1-\myN^{-1})-\imun\la\left(\left\{\frac{j-n}{\myN }\right\}
 +\left\{\frac{\myN-1-j+n}{\myN }\right\}\right)=0
\end{multline}
as the result of the identity
\begin{equation}
\left\{\frac{j}{\myN }\right\}
 +\left\{-\frac{j+1}{\myN }\right\}=1-N^{-1},\quad\forall j\in\INTEGERS.
\end{equation}
Thus,
\begin{multline}
 \gfad_{\theta}(x,n)\gfad_{\theta}(-x,-n)=\prod_{j=0}^{\myN-1}\fad_{\theta}(a_j(x,n))\fad_{\theta}(a_{\myN-1-j}(-x,-n))\\=
\prod_{j=0}^{\myN-1}\fad_{\theta}(a_j(x,n))\fad_{\theta}(-a_{j}(x,n))=\prod_{j=0}^{\myN-1}\fad_{\theta}(0)^2e^{\pi\imun a_j(x,n)^2}\\
=\fad_{\theta}(0)^{2\myN}e^{\pi\imun\sum_{j=0}^{\myN-1} a_j(x,n)^2},
\end{multline}
where we used the inversion relation for Faddeev's function
\begin{equation}
\fad_{\theta}(z)\fad_{\theta}(-z)=e^{\pi\imun z^2}\fad_{\theta}(0)^2.
\end{equation}
Formula~\eqref{inversion} follows now by direct calculation and the fact that
\begin{equation}
\fad_{\theta}(0)=e^{-\pi\imun(1+2\cla^2)/12}.
\end{equation}
\end{proof}
\subsection{The five term operator relation}
In the Hilbert space $L^2(\REALS)$ we consider a pair of  Heisenberg self-adjoint operators $\mathsf{p}$ and $\mathsf{q}$ satisfying the commutation relation
\begin{equation}\label{heisenberg-pq}
[\mathsf{p},\mathsf{q}]\equiv\mathsf{p}\mathsf{q}-\mathsf{q}\mathsf{p}=\frac{1}{2\pi\imun}
\end{equation}
so that  their spectra  are given by the set $\REALS\subset \COMPLEXS$. 
Similarly,
in the Hilbert space $L^2(\INTEGERS/\myN\INTEGERS)\simeq\COMPLEXS^\myN$, we consider a pair of unitary Weyl operators $\mathsf{X}$ and $\mathsf{Y}$ satisfying the relations
\begin{equation}\label{weyl-xy}
\mathsf{Y}\mathsf{X}=e^{2\pi\imun/\myN} \mathsf{X}\mathsf{Y}, \quad \mathsf{X}^\myN=\mathsf{Y}^\myN=1.
\end{equation}
The spectra of both $\mathsf{X}$ and $\mathsf{Y}$ are given by one and the same set $\TORUS_\myN\subset\TORUS\subset\COMPLEXS$ of all $\myN$-th complex roots of unity. 
By using the natural group isomorphism
\begin{equation}
\operatorname{L}_{\myN}\colon \TORUS_\myN\to \INTEGERS/\myN\INTEGERS,
\end{equation}
whose inverse is given by the primitive character
\begin{equation}
\operatorname{L}_{\myN}^{-1}(n)=e^{2\pi\imun n/N},
\end{equation}
and the spectral theorem, we can define the operator function
$\operatorname{L}_{\myN}(\mathsf{A})$
with ``values" in $\INTEGERS/\myN\INTEGERS$ for any operator $\mathsf{A}$ of order $\myN$ in an arbitrary Hilbert space. It formally satisfies the relation
\begin{equation}
\mathsf{A}=e^{2\pi\imun \operatorname{L}_{\myN}(\mathsf{A})/N},
\end{equation}
and by using relations~\eqref{weyl-xy}, we can even write a Heisenberg commutation relation
\begin{equation}\label{heisenberg-mn}
[\operatorname{L}_{\myN}(\mathsf{X}),\operatorname{L}_{\myN}(\mathsf{Y})]\equiv\operatorname{L}_{\myN}(\mathsf{X})\operatorname{L}_{\myN}(\mathsf{Y})-\operatorname{L}_{\myN}(\mathsf{Y})\operatorname{L}_{\myN}(\mathsf{X})=\frac{(\myN-1)\myN}{2\pi\imun}
\end{equation}
which is formal and does not make strict sense but sometimes it is convenient in calculations. For example, by applying the  Baker--Campbell--Hausdorff formula, we calculate a product of Weyl operators as follows,
\begin{multline}
\mathsf{X}\mathsf{Y}=e^{2\pi\imun \operatorname{L}_{\myN}(\mathsf{X})/N}e^{2\pi\imun \operatorname{L}_{\myN}(\mathsf{Y})/N}=e^{2\pi\imun( \operatorname{L}_{\myN}(\mathsf{X})+\operatorname{L}_{\myN}(\mathsf{Y}))/N}e^{(2\pi\imun/\myN)^2[\operatorname{L}_{\myN}(\mathsf{X}),\operatorname{L}_{\myN}(\mathsf{Y})]/2}\\
=e^{2\pi\imun( \operatorname{L}_{\myN}(\mathsf{X})+\operatorname{L}_{\myN}(\mathsf{Y}))/N}e^{\pi\imun(N-1)/\myN}=-e^{2\pi\imun( \operatorname{L}_{\myN}(\mathsf{X})+\operatorname{L}_{\myN}(\mathsf{Y}))/N}e^{-\pi\imun/\myN},
\end{multline}
and conclude that the operator 
$ -e^{\pi\imun/N}\mathsf{X}\mathsf{Y}$
is of order $\myN$, and we also obtain the formula
\begin{equation}
\operatorname{L}_{\myN}\!\left( -e^{\pi\imun/N}\mathsf{X}\mathsf{Y}\right)=\operatorname{L}_{\myN}(\mathsf{X})+\operatorname{L}_{\myN}(\mathsf{Y}).
\end{equation}
All this corresponds to addition of u-operators discussed in the Introduction in the context of arbitrary self-dual LCA groups.

The Hilbert space $L^2(\lca)$ is naturally isomorphic to the tensor product space  $L^2(\REALS)\otimes L^2(\INTEGERS/\myN\INTEGERS)$, and thus we can consider self-adjoint operators $\mathsf{p}$, $\mathsf{q}$ and unitary operators $\mathsf{X}$, $\mathsf{Y}$ which satisfy the relations~\eqref{heisenberg-pq}, \eqref{weyl-xy} as well as the cross relations
\begin{equation}\label{cross-rel}
[\mathsf{p},\mathsf{X}]=[\mathsf{p},\mathsf{Y}]=[\mathsf{q},\mathsf{X}]=[\mathsf{q},\mathsf{Y}]=0.
\end{equation}
In particular, again by using the spectral theorem, for any function $\operatorname{f}\colon \lca\to\COMPLEXS$, we can associate an operator function $\slashed{\operatorname{f}}(\mathsf{x},\mathsf{A})\equiv\operatorname{f}(\mathsf{x},\operatorname{L}_{\myN}(\mathsf{A}))$ for any commuting pair of operators $\mathsf{x}$  and $\mathsf{A}$ where the former is self-adjoint and the latter is of order $\myN$.
\begin{lemma} Let self-adjoint operators $\mathsf{p}$ and $\mathsf{q}$, and unitary operators $\mathsf{X}$ and $\mathsf{Y}$ satisfy relations~\eqref{heisenberg-pq}, \eqref{weyl-xy}, and \eqref{cross-rel}.
Then there exists an element $\mu\in\TORUS$ such that the following five term operator relation is satisfied
\begin{equation}\label{five}
\mu\slashed{\gfad}_{\theta}\!\left(\mathsf{p},\mathsf{X}\right)\slashed{\gfad}_{\theta}\!\left(\mathsf{q},\mathsf{Y}\right)=\slashed\gfad_{\theta}\!\left(\mathsf{q},\mathsf{Y}\right)\slashed\gfad_{\theta}\!\left(\mathsf{p}+\mathsf{q}, -e^{\pi\imun/N}\mathsf{X}\mathsf{Y}\right)\slashed\gfad_{\theta}\!\left(\mathsf{p},\mathsf{X}\right).
\end{equation}
\end{lemma}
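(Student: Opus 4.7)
The plan is to derive \eqref{five} by identifying $\gfad_\theta$ as a quantum dilogarithm over the Pontryagin self-dual LCA group $\lca$ in the sense of Section~\ref{GF}, and then applying the general five-term operator relation \eqref{pent} to the appropriate Heisenberg pair of u-operators.

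First, I will verify that the u-operators
\[
\bkt{\mathsf{P};(x,n)}=e^{2\pi\imun\mathsf{p}x}\mathsf{X}^n,\qquad \bkt{\mathsf{Q};(x,n)}=e^{2\pi\imun\mathsf{q}x}\mathsf{Y}^n,
\]
built from the data of the statement, form a Heisenberg pair over $\lca$ of level $\ell(\mathsf{P},\mathsf{Q})=1$ with respect to the Fourier kernel $\bkt{x,m;y,n}=e^{2\pi\imun xy}e^{-2\pi\imun mn}$. This reduces to a direct Baker--Campbell--Hausdorff computation from \eqref{heisenberg-pq}--\eqref{cross-rel}. Furthermore, I will compute that the u-operator sum $\mathsf{P}+\mathsf{Q}$ of the framework, evaluated at $(x,n)$, reproduces $e^{2\pi\imun(\mathsf{p}+\mathsf{q})x}\cdot(-e^{\pi\imun/\myN}\mathsf{X}\mathsf{Y})^n$ up to a fixed scalar phase. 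This identifies $\mathsf{P}+\mathsf{Q}$ with the pair $(\mathsf{p}+\mathsf{q},-e^{\pi\imun/\myN}\mathsf{X}\mathsf{Y})$ featured in the middle factor of \eqref{five}, the identity $\operatorname{L}_{\myN}(-e^{\pi\imun/\myN}\mathsf{X}\mathsf{Y})=\operatorname{L}_{\myN}(\mathsf{X})+\operatorname{L}_{\myN}(\mathsf{Y})$ derived in the text being the key ingredient.

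Second, the product formula \eqref{gfad} combined with the $q$-Pochhammer form \eqref{mfun}--\eqref{dfun} of $\gfad_\theta$ shows that $\slashed\gfad_\theta(\mathsf{p},\mathsf{X})$ coincides, via joint spectral calculus on the commuting pair $(\mathsf{p},\operatorname{L}_{\myN}(\mathsf{X}))$, with $\gfad_\theta(\mathsf{P})$ defined through the framework's operator formula \eqref{opfun}, and similarly for the other two factors of \eqref{five}. Thus \eqref{five} becomes precisely \eqref{pent} applied to the quantum dilogarithm $\gfad_\theta$ over $\lca$, with the scalar $\mu\in\TORUS$ absorbing the phase coming from the comparison of $\mathsf{P}+\mathsf{Q}$ with the form appearing in the statement.

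The core of the argument is therefore the verification that $\gfad_\theta$ is genuinely a quantum dilogarithm over $\lca$, i.e.\ that it satisfies the operator five-term identity \eqref{pent}. This reduces to the formal $(x;q)_\infty$ identity $(\mathsf{v};q)_\infty(\mathsf{u};q)_\infty=(\mathsf{u};q)_\infty(-\mathsf{v}\mathsf{u};q)_\infty(\mathsf{v};q)_\infty$ (valid when $\mathsf{u}\mathsf{v}=q\mathsf{v}\mathsf{u}$), applied to $\mathsf{u}=-q^{1/2}\chi(\mathsf{P})$, $\mathsf{v}=-q^{1/2}\chi(\mathsf{Q})$, together with its complex-conjugate counterpart for the denominator of the ratio \eqref{mfun}. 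The principal technical obstacle is promoting this formal identity to a bona fide operator equality on $L^2(\lca)$: the $q$-products converge strongly only on a dense subspace of analytic vectors of the self-adjoint generators, and one must check that the three sides of \eqref{five} act on a common invariant domain. Pointwise unitarity $|\gfad_\theta|=1$ together with the inversion relation \eqref{inversion} yield boundedness and unitarity of the three operators, which allows the identity to extend by continuity from this dense common domain to all of $L^2(\lca)$.
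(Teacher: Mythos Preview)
Your approach is correct in spirit but genuinely different from the paper's. You attempt a \emph{direct} derivation: set up the u-operators $\mathsf{P},\mathsf{Q}$ on $\lca$, invoke the formal $q$-Pochhammer pentagon $(\mathsf{v};q)_\infty(\mathsf{u};q)_\infty=(\mathsf{u};q)_\infty(-\mathsf{v}\mathsf{u};q)_\infty(\mathsf{v};q)_\infty$ for the numerator and its $\bar q$-analogue for the denominator of the ratio \eqref{mfun}, use the cross-commutation $\chi(\mathsf{Q})\bar\chi(\mathsf{P})=\bar\chi(\mathsf{P})\chi(\mathsf{Q})$ to factor the two pentagons, and then pass from the formal identity to a genuine operator equality by density and unitarity. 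The paper instead runs a Schur-lemma argument: it \emph{defines} $\mu$ as the unitary operator making \eqref{five} hold, introduces the normal operators $\mathsf{U}=\mathsf{Y}e^{2\pi\theta\mathsf{q}/\sqrt{\myN}}$ and $\mathsf{V}=\mathsf{X}e^{2\pi\theta\mathsf{p}/\sqrt{\myN}}$, uses the functional difference equations of $\fad_\theta$ to show $\mu$ commutes with $\mathsf{U},\mathsf{V},\mathsf{U}^*,\mathsf{V}^*$, and then concludes from $\mathsf{U}\mathsf{U}^*$, $\mathsf{V}\mathsf{V}^*$ that $\mu$ commutes with $\mathsf{p},\mathsf{q},\mathsf{X},\mathsf{Y}$ separately, hence is scalar by irreducibility.

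The trade-off: your approach, if completed, would in principle determine $\mu$ explicitly as a concrete phase, whereas the paper's argument only establishes scalarity. On the other hand, the paper's route sidesteps exactly the point you flag as the ``principal technical obstacle'': it never needs to make the formal $q$-series pentagon rigorous as an operator identity on $L^2(\lca)$, nor to exhibit a common invariant dense domain for the infinite products. It works entirely with bounded unitary operators and a commutant computation. Your sketch of the analytic extension (dense analytic vectors, then continuity via unitarity) is plausible but would require real work to complete; note also that the paper itself, in the paragraph after \eqref{mfun}, only says it \emph{expects} $\Phi_{q,\chi}$ to be a quantum dilogarithm in general---so you cannot simply cite \eqref{pent} as already established for $\lca$.
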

\begin{proof} Let us use relation~\eqref{five} as a definition of a unitary operator $\mu$. It suffices to show that this operator is multiple of the identity operator.

Two operators
 \begin{equation}
\mathsf{U}\equiv \mathsf{Y}e^{2\pi\theta\mathsf{q}/\sqrt{\myN}},\quad \mathsf{V}\equiv \mathsf{X}e^{2\pi\theta\mathsf{p}/\sqrt{\myN}}
\end{equation}
generate a normal algebra in the sense that the Hermitian conjugate of any element of that algebra is in its commutant. Indeed, we have
\begin{multline}
\mathsf{V}\mathsf{U}^*=\mathsf{X}e^{2\pi\theta\mathsf{p}/\sqrt{\myN}}\mathsf{Y}^{-1}e^{2\pi\bar\theta\mathsf{q}/\sqrt{\myN}}\\=e^{2\pi\imun(1-|\theta|^2)/\myN}\mathsf{Y}^{-1}e^{2\pi\bar\theta\mathsf{q}/\sqrt{\myN}}\mathsf{X}e^{2\pi\theta\mathsf{p}/\sqrt{\myN}}
=\mathsf{U}^*\mathsf{V}.
\end{multline}
By using the functional difference equations for the Faddeev's function, we can show that $\mu$ commutes with $\mathsf{U}$, $\mathsf{V}$ and their Hermitian conjugates. In particular, it commutes
with $UU^*=e^{2\pi\Re(\theta)\mathsf{q}/\sqrt{\myN}}$ and $VV^*=e^{2\pi\Re(\theta)\mathsf{p}/\sqrt{\myN}}$. The latter, being real exponentials of self-adjoint operators, imply that $\mu$ commutes with both $\mathsf{p}$ and $\mathsf{q}$, i.e. it is independent of $\mathsf{p}$ and $\mathsf{q}$ and thus can depend on only of $\mathsf{X}$ and $\mathsf{Y}$. However, as $\mu$ commuts with $\mathsf{U}$ and $\mathsf{V}$ separately, we conclude that it is a scalar.
\end{proof}
\subsection{Charging}
Let $a,b,c$ be three positive real numbers satisfying the linear condition
\begin{equation}
a+b+c=\frac1{\sqrt{\myN}}.
\end{equation}
Following closely our theory of Teichm\"uller TQFT of \cite{MR3227503},
we define a \emph{charged} function
\begin{equation}
\psi_{\sqrt{\myN}a,\sqrt{\myN}c}(x,n)\equiv e^{-2\pi\imun\cla a x}/\gfad_\la\!\left(x-\cla(a+c),n\right)
\end{equation}
for which we have the following formulae:
 \begin{equation}\label{f1}
\tilde\psi_{\sqrt{\myN}a,\sqrt{\myN}c}'(x,n)=\psi_{\sqrt{\myN}c,\sqrt{\myN}b}(x,M+n)e^{-\pi\imun\cla^2a(a+2c)}e^{-\pi\imun(\myN-4\cla^2\myN^{-1})/12}
\end{equation}
which follows from the Fourier transformation formula \eqref{fourierf},
\begin{multline}\label{f2}
 \overline{\psi_{\sqrt{\myN}a,\sqrt{\myN}c}(x,n)}=\psi_{\sqrt{\myN}c,\sqrt{\myN}a}(-x,-n)\bkt{x,n}e^{\pi\imun\cla^2(a+c)^2}e^{-\pi\imun(\myN+2\cla^2\myN^{-1})/6}\\
= \tilde\psi_{\sqrt{\myN}b,\sqrt{\myN}c}(-x,-n+M)e^{2\pi\imun Mn/N}e^{-2\pi\imun\cla^2ab}e^{-\pi\imun(\myN-4\cla^2\myN^{-1})/12},
\end{multline}
which follow from the inversion formula~\eqref{inversion} and the previous formula, and
\begin{multline}\label{f3}
 \overline{\tilde\psi'_{\sqrt{\myN}a,\sqrt{\myN}c}(x,n)}\\=\psi_{\sqrt{\myN}b,\sqrt{\myN}c}(-x,-n+M)\bkt{x,n+M}e^{-2\pi\imun\cla^2ab}e^{-\pi\imun(\myN-4\cla^2\myN^{-1})/12},
\end{multline}
which a consequence of the previous two formulas. If the element $M\in\INTEGERS/\myN\INTEGERS$  is trivial (which is the case for if $\myN$ is not multiple of 8) then formulae \eqref{f1}--\eqref{f3} are direct analogues of those in the end of Section~4 of  \cite{MR3227503}, and thus the analogues of charged pentagon relation, the exponential decay properties at infinity, as well as the fundamental lemma  of \cite{MR3227503} are also valid for this new solution. All that means that we can calculate partition functions of shaped triangulated 3-manifolds along the line described in \cite{MR3227503}.

\def\cprime{$'$} \def\cprime{$'$}

\end{document}